\date{\today}
\newtheorem{thm}{Theorem}[section]
\newtheorem{cor}[thm]{Corollary}
\newtheorem{lem}[thm]{Lemma}
\newtheorem{prop}[thm]{Proposition}
\theoremstyle{definition}
\theoremstyle{remark}
\newtheorem{rem}[thm]{Remark}
\numberwithin{equation}{section}
\newcommand{\R}{\mathbb R}
\newcommand{\C}{{\mathbb C}}
\newcommand{\Dk}{\Delta_{\kappa}}
\newcommand{\K}{\kappa }
\newcommand{\Fk}{\mathcal{F}_\kappa}
\title[Dunkl multipliers]
{On the boundedness of Dunkl multipliers}
\author[ Mukherjee-Thangavelu]{ Suman Mukherjee and Sundaram Thangavelu}
\address[S. Mukherjee]{School of Mathematical Sciences, National Institute of Science Education and Research, An OCC of Homi Bhabha National Institute, Bhubaneswar--752050, India.}
\email{sumanmukherjee822@gmail.com}
\address[S. Thangavelu]{Department of Mathematics, Indian Institute of Science, Bangalore--560012, India.}
\email{veluma@iisc.ac.in}
\keywords{Fourier multipliers, Dunkl multipliers, H\"ormander condition, $g$-functions.}
\subjclass[2020]{Primary: 42B15, 42B25. Secondary: 47G10, 47B34}
\begin{document}

	\maketitle
	
	\vskip0.25in

	\begin{abstract} In this article we use Littlewood-Paley-Stein theory to  prove two versions of Dunkl multiplier theorem when the multiplier $ m $ satisfies a modified H\"ormander condition. When $ m $ is radial we give a simple proof of a known result. For general $ m $ we prove that the Dunkl multiplier operator takes radial functions in $ L^p $ boundedly into $ L^p $ for all $ p \geq 2.$
	\end{abstract}

	
	\section{Introduction} \label{Sec-intro} In this work we are concerned with a multiplier theorem for the Dunkl transform  $ \Fk$ which is a generalisation of the Fourier transform $ \mathcal{F}.$
	We begin by recalling some of the  known results on Fourier multipliers.  By a Fourier multiplier operator we mean an operator of the form $ T_m $ defined by the relation $ \mathcal{F}(T_mf )= m\, \mathcal{F}f $ for $ f \in L^2(\R^n)$ where $ m \in L^\infty(\R^n) $ is known as the multiplier associated to $ T_m.$ These are precisely the operators on $ L^2(\R^n) $ which commute with all translations.  It is well known that unless some extra conditions are imposed on $m $ the operator $ T_m $ need not extend to $L^p(\R^n) $ as a bounded operator for $ p \neq 2.$ The celebrated H\"ormander-Mihlin multiplier theorem gives a sufficient condition on $ m $ so that $ T_m $ is bounded on $ L^p(\R^n) $ for all $ 1 < p < \infty.$\\
	
	Given  $ m \in L^\infty(\R^n) ,  \psi \in C_0^\infty(\R^n) $ supported in the complement of $ 0$ and  $t >0$, we define
	$$  \widehat{m}_\psi(x,t) = \int_{\R^n} e^{i x \cdot \xi} \psi(\xi) m( \xi/\sqrt{t})\, d\xi.$$ 
	We say that $ m $ satisfies  H\"ormander's condition of order $ s \geq 0 $ if  for any such $ \psi$
	\begin{equation}\label{HM} \int_{\R^n} (1+|x|^2)^s | \widehat{m}_\psi(x, t)|^2 dx \leq C_\psi 
	\end{equation} 
	for all $ t >0.$ Then the following theorem is well known in the literature.
	
	\begin{thm} \label{loc-sob}Let $ m $ be a bounded function which satisfies the condition \eqref{HM} for some $ s> n/2.$ Then the Fourier multiplier $ T_m $ is bounded on $ L^p(\R^n) $ for any $ 1 < p < \infty.$
	\end{thm}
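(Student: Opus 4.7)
The strategy is to run the classical Calderón--Zygmund machinery. Since $\|m\|_\infty<\infty$, Plancherel's theorem gives $\|T_mf\|_2\le \|m\|_\infty \|f\|_2$ for free. By the Calderón--Zygmund singular integral theorem it then suffices to verify that the convolution kernel $K$ of $T_m$ (understood as a tempered distribution) satisfies the H\"ormander integral condition
\begin{equation*}
\int_{|x|>2|y|}\bigl|K(x-y)-K(x)\bigr|\,dx \le C, \qquad y\in \R^n\setminus\{0\},
\end{equation*}
because this yields weak-type $(1,1)$ boundedness. The full range $1<p<\infty$ then follows by Marcinkiewicz interpolation on $(1,2]$ and by duality on $[2,\infty)$, using that the adjoint $T_m^\ast = T_{\overline m}$ has symbol $\overline m$, which still satisfies \eqref{HM}.

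To verify the kernel condition I would decompose $m$ dyadically. Choose $\Psi\in C_c^\infty(\R^n)$ supported in the annulus $\{1/2\le|\xi|\le 2\}$ with $\sum_{j\in\Z}\Psi(2^{-j}\xi)=1$ for $\xi\ne 0$, set $m_j(\xi)=\Psi(2^{-j}\xi)m(\xi)$, and let $K_j$ be the associated kernel. Applying \eqref{HM} with $\psi=\Psi$ and $\sqrt t=2^{-j}$ and rescaling converts the uniform Sobolev-type control on $\widehat m_\Psi$ into the estimate
\begin{equation*}
\int_{\R^n}(1+2^{2j}|x|^2)^{s}|K_j(x)|^2\,dx \le C\,2^{jn},
\end{equation*}
uniformly in $j$. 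From this, Cauchy--Schwarz (using $2s>n$) yields $\|K_j\|_1\le C$; a symbol differentiation argument gives $\|\nabla K_j\|_1\le C\,2^j$; and a sharper Cauchy--Schwarz split over the exterior $\{|x|>2|y|\}$ gives
\begin{equation*}
\int_{|x|>2|y|}|K_j(x)|\,dx \le C\,(2^j|y|)^{n/2-s},
\end{equation*}
which is small precisely when $2^j|y|$ is large --- this is the step in which the hypothesis $s>n/2$ plays a decisive role.

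Finally I would combine these bounds to control $\int_{|x|>2|y|}|K_j(x-y)-K_j(x)|\,dx$: for low frequencies $2^j|y|\le 1$ the mean value theorem together with $\|\nabla K_j\|_1\le C\,2^j$ gives the bound $C\,2^j|y|$, while for high frequencies $2^j|y|>1$ the exterior $L^1$ estimate above applied to both terms gives $C(2^j|y|)^{n/2-s}$. Both series then sum geometrically in $j$ to a constant independent of $y$, yielding the H\"ormander integral condition and closing the argument. The main obstacle I anticipate is the bookkeeping required to transfer the scaling in \eqref{HM} cleanly to the dyadic pieces and to splice the low- and high-frequency estimates in the transition regime $2^j|y|\sim 1$; one must also verify that $\sum_j K_j$ really represents the distributional kernel of $T_m$ up to absorbable errors (the zero-frequency contribution being controlled by $\|m\|_\infty$).
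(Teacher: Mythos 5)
Your argument is correct in outline and, with the bookkeeping you flag carried out, would give a complete proof; but it is not the route the paper takes. You run the classical Calder\'on--Zygmund scheme: dyadic decomposition $m=\sum_j\Psi(2^{-j}\cdot)\,m$, rescaling of \eqref{HM} at $\sqrt t=2^{-j}$ into the weighted bound $\int(1+2^{2j}|x|^2)^s|K_j(x)|^2dx\le C2^{jn}$, Cauchy--Schwarz (using $2s>n$) to get $\|K_j\|_1\le C$, the gradient bound at low frequencies and the exterior decay $(2^j|y|)^{n/2-s}$ at high frequencies, hence the H\"ormander integral condition, weak $(1,1)$, interpolation and duality. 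The paper never proves Theorem \ref{loc-sob} this way --- it states it as known --- and in Section \ref{sec-revisit Fourier multipliers} it instead establishes the closely related Theorem 3.3 under the \emph{modified} H\"ormander condition \eqref{modified HM} (which it notes is slightly stronger than \eqref{HM}) by Littlewood--Paley--Stein theory: the norm equivalence $\|f\|_p\sim\|g_{s+1}(f)\|_p$, the pointwise domination $g_{s+1}(T_mf,x)\le C\,g_s^\ast(f,x)$ extracted directly from the multiplier hypothesis, the bound $\|g_s^\ast(f)\|_p\le C\|f\|_p$ for $p>2$ and $s>n/2$, and duality for $1<p<2$. Your approach buys the weak type $(1,1)$ endpoint and works under \eqref{HM} exactly as stated; the paper's approach gives up the endpoint and uses a slightly stronger hypothesis, but it needs no pointwise kernel or translation-invariance estimates, which is precisely why it transfers to the Dunkl setting in Section \ref{sec-multiplier thm for Dunkl trans}, where the $L^p$ boundedness of Dunkl translations is unknown. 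Two small points to tidy up in your write-up: there is no separate ``zero-frequency contribution,'' since $\sum_j\Psi(2^{-j}\xi)=1$ for every $\xi\ne0$; and the bound $\|\nabla K_j\|_1\le C2^j$ should be justified by applying \eqref{HM} to the cutoffs $\xi_\ell\Psi(\xi)$, which is legitimate because the hypothesis is assumed for every admissible $\psi$.
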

	
	An important corollary of this theorem is the following version of H\"ormander-Mihlin multiplier theorem with easy to verify conditions on $m.$
	
	\begin{cor} Let $ m $ be a bounded function which satisfies  $ |\partial^\alpha m(\xi)| \leq C_\alpha |\xi|^{-|\alpha|} $ for all  $ |\alpha| \leq k $ for some $ k > n/2.$ Then  $ T_m $ is bounded on $ L^p(\R^n) $ for any $ 1 < p < \infty.$
	\end{cor}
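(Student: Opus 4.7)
The plan is to deduce this corollary directly from Theorem~\ref{loc-sob} by showing that the pointwise Mihlin-type bounds on the derivatives of $m$ force the H\"ormander condition \eqref{HM} to hold with the same exponent $s = k > n/2$ (treating $k$ as an integer, which is all one needs). The whole argument is a scaling computation, and the only point to verify is that the resulting estimate is uniform in $t$.

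First I would fix $\psi \in C_0^\infty(\R^n)$ supported away from the origin; since $\operatorname{supp}\psi$ is compact, it sits in a fixed annulus $\{c_1 \leq |\xi| \leq c_2\}$. For $t > 0$ set $m_t(\xi) = m(\xi/\sqrt{t})$, so that $\widehat{m}_\psi(\,\cdot\,,t)$ is, up to a reflection, the Fourier transform of $g_t := \psi \, m_t$. Because $g_t$ is supported in this fixed compact annulus, Plancherel combined with the monomial expansion of $(1+|x|^2)^k$ gives
$$ \int_{\R^n} (1+|x|^2)^k \, |\widehat{m}_\psi(x,t)|^2 \, dx \;\leq\; C_\psi \sum_{|\alpha| \leq k} \|\partial^\alpha g_t\|_{L^2}^2, $$
so it suffices to bound all partial derivatives of $g_t$ of order at most $k$ in $L^2$, uniformly in $t$.

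The key step is the following scaling estimate. By the chain rule,
$$ \partial^\alpha m_t(\xi) = t^{-|\alpha|/2}(\partial^\alpha m)(\xi/\sqrt{t}), $$
and the Mihlin hypothesis applied at the point $\xi/\sqrt{t}$ yields
$$ |\partial^\alpha m_t(\xi)| \;\leq\; C_\alpha\, t^{-|\alpha|/2}\, |\xi/\sqrt{t}|^{-|\alpha|} \;=\; C_\alpha\, |\xi|^{-|\alpha|}. $$
On $\operatorname{supp}\psi$ we have $|\xi| \geq c_1$, so $|\partial^\alpha m_t(\xi)| \leq C_\alpha'$ uniformly in $t$ for every $|\alpha| \leq k$. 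Leibniz's rule applied to $g_t = \psi m_t$ then gives $\|\partial^\alpha g_t\|_{L^\infty} \leq C_\alpha''$ independently of $t$, and the uniformly bounded support converts these sup-bounds into the required $L^2$ estimates.

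Combining the two displays verifies \eqref{HM} with $s = k > n/2$, and Theorem~\ref{loc-sob} delivers the $L^p$ boundedness for every $1 < p < \infty$. The one subtlety worth flagging is the exact cancellation between the factor $t^{-|\alpha|/2}$ produced by differentiating $m_t$ and the decay $|\xi/\sqrt{t}|^{-|\alpha|}$ provided by the Mihlin hypothesis; this dimensional balance is precisely what makes the H\"ormander condition $t$-uniform, and it is the only place where both the homogeneous Mihlin shape and the $\sqrt{t}$-scaling built into the definition of $\widehat{m}_\psi$ are essentially used.
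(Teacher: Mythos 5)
Your argument is correct and is exactly the standard deduction the paper has in mind: the corollary is stated there without proof as a well-known consequence of Theorem~\ref{loc-sob}, and your verification of \eqref{HM} with $s=k$ --- chain rule giving $\partial^\alpha m_t(\xi)=t^{-|\alpha|/2}(\partial^\alpha m)(\xi/\sqrt t)$, the exact cancellation of the $t$-powers against the Mihlin decay, uniform bounds on the fixed annulus $\operatorname{supp}\psi$, and Plancherel --- is the canonical route. Nothing is missing; the only point worth stating explicitly is that $k$ is an integer so the monomial expansion of $(1+|x|^2)^k$ applies, which you implicitly use.
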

	
	Dunkl multipliers are defined in the same way as the Fourier multipliers simply by replacing $ \mathcal{F} $ by $ \Fk.$ Thus for $ m \in L^\infty(\R^d) $ we define $ \Fk(\mathcal{T}_m f) = m\, \Fk(f).$ Just like the Fourier transform, the Dunkl transform is unitary on $ L^2(\R^d, h_\K^2) $ for a suitable weight function $ h_\K $ which is homogeneous of degree $ \gamma_\K \geq 0.$ An interesting open problem is to find optimal conditions on $ m $ so that $ \mathcal{T}_m $ initially defined on $ L^2 \cap L^p(\R^d, h_\K^2) $ extends as a bounded operator on $L^p(\R^d, h_\K^2) $ for all $ 1 < p < \infty.$  This problem  has bee studied by  several authors, see  \cite{AGS, ABDM, S} but optimal results are known only when $ d =1.$ In the higher dimensional case with arbitrary root system and non-negative multiplicity function, only partial results are known, notable among them is a recent result of Dziuba\'{n}ski and Hejna \cite{HejnaHMT} which is stated in  the following:
	
	\begin{thm} Let $ m $ be a bounded function which satisfies the condition \eqref{HM} for some $ s> d_\K$  where $ d_\K = d+2\gamma_\K $ is the homogeneous dimension of the measure $ h_\K^2(x) dx.$ Then the Dunkl multiplier $ \mathcal{T}_m $ is bounded on $ L^p(\R^d, h_\K^2) $ for any $ 1 < p < \infty $ and is of weak type $ (1,1).$
	\end{thm}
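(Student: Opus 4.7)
The plan is to recognize $\mathcal{T}_m$ as a Dunkl-convolution operator with kernel $K = \Fk^{-1}(m)$ and treat it as a Calder\'on--Zygmund operator on the space of homogeneous type $(\R^d, |\cdot|, h_\K^2(x)\,dx)$ of homogeneous dimension $d_\K$. First I would decompose dyadically: pick $\phi \in C_c^\infty$ supported in $\{1/2 < |\xi| < 2\}$ with $\sum_{j\in\Z}\phi(2^{-j}\xi) = 1$ on $\R^d\setminus\{0\}$, set $m_j(\xi) = m(\xi)\phi(2^{-j}\xi)$ and $K_j = \Fk^{-1}(m_j)$, so that $K = \sum_j K_j$. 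Applying \eqref{HM} with $t = 4^{-j}$ and $\psi = \phi$, a routine rescaling yields the uniform Euclidean Sobolev bound
\[
\int_{\R^d}(1 + 4^j|y|^2)^s |\widehat{m_j}(y)|^2\, dy \;\le\; C\cdot 2^{jd}.
\]

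The technical heart of the proof is to convert this Euclidean smoothness information on $m_j$ into weighted $L^2$-decay of $K_j$ with respect to the Dunkl measure $h_\K^2\,dx$. Using Plancherel for $\Fk$ and the fact that multiplication by $\xi_i$ on the Dunkl-transform side corresponds to a first-order Dunkl differential operator acting on $K_j$, one should establish
\[
\int_{\R^d}(1 + 4^{j}|x|^2)^{N}|K_j(x)|^2 h_\K^2(x)\,dx \;\le\; C_N\, 2^{j d_\K}, \qquad 0 \le N \le s.
\]
Combined with the homogeneous-type estimate $h_\K^2(\{|x|\le R\}) \asymp R^{d_\K}$, Cauchy--Schwarz gives $\int_{|x|\ge R}|K_j|\,h_\K^2\,dx \le C(2^j R)^{d_\K/2 - s}$ for $R \ge 2^{-j}$, which is summable in $j$ when $s > d_\K/2$.

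For the H\"ormander integral (smoothness) estimate $\sup_{y}\int_{|x|\ge 2|y|}|\tau_y K(x) - K(x)|\,h_\K^2(x)\,dx < \infty$ (with $\tau_y$ the Dunkl translation), I would write $\tau_y K_j - K_j = \Fk^{-1}\bigl((E_\K(iy,\cdot)-1)m_j\bigr)$, where $E_\K$ denotes the Dunkl kernel, and exploit the inequality $|E_\K(iy,\xi)-1| \le \min(1,|y||\xi|)$. Splitting the $j$-sum at the threshold $|y|\sim 2^{-j}$, the low-frequency piece $2^j|y|\le 1$ gains a factor $2^j|y|$, while the high-frequency piece invokes the $L^1$-decay of $K_j$ just derived. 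Both sums are finite precisely when $s > d_\K$; the extra room beyond the naive $s > d_\K/2$ is needed to absorb the non-local character of Dunkl translation.

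Once the kernel estimates are in place the argument concludes in a standard fashion: $m \in L^\infty$ plus Plancherel for $\Fk$ gives $L^2$-boundedness of $\mathcal{T}_m$; combined with the H\"ormander integral estimate, the Calder\'on--Zygmund theorem on spaces of homogeneous type yields weak $(1,1)$; and interpolation together with duality ($\mathcal{T}_m^* = \mathcal{T}_{\bar m}$) extends the result to $1 < p < \infty$. The main obstacle throughout is the analysis of $\tau_y$ acting on $K_j$: since $\tau_y$ is neither positivity-preserving nor a pointwise translation, and its action on a non-radial function spreads support across the entire reflection-group orbit of $y$, classical pointwise kernel estimates are unavailable and one is forced to work with $L^2$ information on the Dunkl-transform side, which is what dictates the threshold $s > d_\K$.
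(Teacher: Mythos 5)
First, note that the paper does not prove this statement: it is quoted verbatim from Dziuba\'nski and Hejna \cite{HejnaHMT} as background, so there is no in-paper proof to compare against. Judged on its own terms, your sketch has the right general shape (dyadic decomposition, weighted $L^2$ bounds on the pieces $K_j$, a H\"ormander integral condition, Calder\'on--Zygmund theory on the homogeneous-type space), but the two steps you label as "the technical heart" are precisely the ones you do not carry out, and they are exactly where the Dunkl-specific obstructions live. The hypothesis \eqref{HM} is a \emph{Euclidean} Sobolev bound on the ordinary Fourier transform of $m_j$ with respect to Lebesgue measure, whereas your claimed estimate $\int(1+4^j|x|^2)^N|K_j|^2h_\K^2\,dx\lesssim 2^{jd_\K}$ concerns $K_j=\Fk^{-1}(m_j)$ and, by Plancherel for $\Fk$, amounts to bounding $\|\mathcal{D}^\alpha m_j\|_{L^2(h_\K^2)}$ for $|\alpha|\le N$. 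The Dunkl derivatives contain difference quotients $(m_j(\xi)-m_j(\sigma_\lambda\xi))/\langle\lambda,\xi\rangle$ and the measure changes from $d\xi$ to $h_\K^2(\xi)\,d\xi$, so this is not "a routine rescaling"; moreover iterating $\mathcal{D}$ on a product of non-$G$-invariant factors has no simple Leibniz rule (the present paper devotes Subsection \ref{subsection-Leib formula for Dunkl deriv} to building one precisely because of this, and even then only for products with a \emph{radial} factor). The same difficulty recurs, worse, in your smoothness estimate: controlling $\int_{|x|\ge 2|y|}|\tau_\K(y)K_j(x)-K_j(x)|\,h_\K^2\,dx$ by Cauchy--Schwarz against the weight $(1+4^j|x|^2)^{-s}$ forces you to bound $\|\,|x|^{N}\tau_\K(y)K_j\|_{L^2(h_\K^2)}$, i.e.\ $\|\mathcal{D}^\alpha(E_\K(iy,\cdot)m_j)\|_{L^2(h_\K^2)}$, and $E_\K(iy,\cdot)$ is not $G$-invariant, so the elementary bound $|E_\K(iy,\xi)-1|\le\min(1,|y||\xi|)$ does not propagate to the derivatives you need. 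You offer no substitute mechanism.

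A symptom that something essential is missing is your bookkeeping of the threshold: the two sums you describe (gain of $2^j|y|$ for $2^j|y|\le 1$, decay $(2^j|y|)^{d_\K/2-s}$ for $2^j|y|\ge 1$) close already for $s>d_\K/2$, so your own sketch does not actually explain why $s>d_\K$ is required; the sentence "the extra room \dots is needed to absorb the non-local character of Dunkl translation" is an assertion, not an argument. The actual proof in \cite{HejnaHMT} sidesteps the missing translation/Leibniz machinery by routing everything through the Dunkl heat semigroup: one writes the localized pieces against $e^{-4^{-j}|\xi|^2}$ and uses the known Gaussian-type upper bounds and H\"older-type regularity of the Dunkl heat kernel $h_t(x,y)$, phrased in terms of the orbit distance $d(x,y)=\min_{\sigma\in G}|x-\sigma(y)|$, to get \emph{pointwise} control of $\tau_\K(y)K_j$; the passage from $L^2$ to the pointwise/$L^\infty$ bounds needed there is where the extra $d_\K/2$ derivatives are genuinely spent. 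Without either that heat-kernel input or a workable Dunkl Leibniz formula for non-$G$-invariant products, your outline cannot be completed as written.
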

	
	As in the case of the Fourier transform it is expected that the condition \eqref{HM} for some $ s> d_\K /2$ is sufficient for the boundedness of $ \mathcal{T}_m $ on $ L^p(\R^d, h_\K^2) .$  The main obstacle in reaching this optimal result seems to be the missing information that the Dunkl translation operators $ \tau_\K(x) $ are bounded on $ L^p(\R^d, h_\K^2) .$  Indeed, in the above paper  Dziuba\'{n}ski and Hejna have shown that once $ \tau_\K(x) $ is bounded on $ L^1(\R^d, h_\K^2) ,$  the optimal result holds.\\
	
	Though the boundedness of $ \tau_\K(x) $ is not known on  $ L^p(\R^d, h_\K^2) ,$ it has been proved in  \cite{ThangaveluCOMF}, \cite{GorbachevPLBDTGTOAIA}, that $ \tau_\K(x) $  takes all radial functions in $ L^p(\R^d, h_\K^2) $ boundedly into $ L^p(\R^d, h_\K^2) .$ In view of this we can expect a better multiplier theorem  when $ m $ is radial. Indeed, such a version of multiplier theorem has been proved by  Dai and Wang \cite{WangATT}, see also \cite{DeleavalDSMWVIUL}.
	
	\begin{thm}\label{dai-wang}Let $ m(\xi) = m_0(|\xi|)$ where $m_0 $ is  a bounded function which  for any $R>0$ satisfies the H\"ormander condition
		\begin{equation}
			\frac{1}{R} \int_{R}^{2R} \left| m_0^{(j)}(t) \right|\, dt\lesssim R^{-j},  
		\end{equation}
		where $ j $ is the smallest integer greater than or equal to $ d_\K/2+1/2.$
		Then $ \mathcal{T}_m $ is bounded on $ L^p(\R^d, h_\K^2) $ for any $ 1 < p < \infty .$ 
	\end{thm}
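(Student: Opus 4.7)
The plan is to follow the Littlewood--Paley--Stein strategy advertised in the abstract, crucially exploiting that the multiplier $m(\xi)=m_0(|\xi|)$ is radial. This ensures the associated Dunkl-convolution kernel is radial, so the only translation estimates required are those of \cite{ThangaveluCOMF,GorbachevPLBDTGTOAIA}, namely boundedness of $\tau_\K(y)$ on radial functions in $L^p(\R^d,h_\K^2)$---the general $L^p$-boundedness of $\tau_\K(y)$, which is not known, never enters.

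First I would perform a smooth dyadic decomposition of $m_0$: choose $\phi\in C_c^\infty(1/2,2)$ with $\sum_{j\in\Z}\phi(2^{-j}t)=1$ on $(0,\infty)$ and set $m_0=\sum_{j} m_0^{(j)}$ with $m_0^{(j)}(t)=m_0(t)\phi(2^{-j}t)$. Each piece lives on the dyadic annulus $\{2^{j-1}\le|\xi|\le 2^{j+1}\}$. Writing $N=\lceil d_\K/2+1/2\rceil$ for the derivative order in the hypothesis, the H\"ormander condition applied at scale $R=2^j$ yields uniform $L^2$-control on $N$ derivatives of the rescaled profiles $m_0^{(j)}(2^j\cdot)$; applying Plancherel for the Dunkl transform on $L^2(\R^d,h_\K^2)$ (which intertwines multiplication by $|\xi|^2$ with $-\Dk$) upgrades this to an appropriate weighted $L^2(h_\K^2)$-bound on the radial convolution kernels $k_j=\Fk^{-1}(m_0^{(j)}(|\cdot|))$. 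Since the homogeneous dimension of $h_\K^2\,dx$ is $d_\K$, Cauchy--Schwarz against the polynomial weight then converts this into a uniform $L^1(h_\K^2)$-bound on $k_j$; this is precisely where the threshold $N>d_\K/2$ is forced.

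Next I would reassemble the pieces through Stein's $g$-function machinery for the Dunkl heat semigroup $\{e^{-s\Dk}\}_{s>0}$, which is symmetric, positivity-preserving, and $L^p(h_\K^2)$-contractive. Stein's general theory of symmetric diffusion semigroups supplies the $L^p$-equivalence
\[\|f\|_{L^p(h_\K^2)}\simeq\Bigl\|\Bigl(\sum_{j\in\Z}|\Delta_j f|^2\Bigr)^{1/2}\Bigr\|_{L^p(h_\K^2)},\qquad 1<p<\infty,\]
where $\Delta_j=\psi(2^{-j}\sqrt{\Dk})$ for a suitable bump $\psi\in C_c^\infty(0,\infty)$, defined via the spectral calculus of $\Dk$. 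Writing $\mathcal{T}_m f=\sum_j \mathcal{T}_{m_0^{(j)}}\widetilde{\Delta}_j f$ for a fattened projector $\widetilde{\Delta}_j$, each $\mathcal{T}_{m_0^{(j)}}$ is a Dunkl convolution against the radial kernel $k_j$, and the radial translation bound converts the uniform $L^1(h_\K^2)$-bound on $k_j$ into a uniform $L^p$-operator bound on $\mathcal{T}_{m_0^{(j)}}$. A standard vector-valued Minkowski/Khintchine step combined with the square-function equivalence then closes out $\|\mathcal{T}_m f\|_{L^p(h_\K^2)}\lesssim\|f\|_{L^p(h_\K^2)}$.

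The step I expect to be hardest is precisely the passage from the weighted $L^2$-bound on $k_j$ to the operator estimate on $\mathcal{T}_{m_0^{(j)}}$. In the classical Euclidean Fourier setting this would be immediate from translation invariance and Young's inequality, but Dunkl translation $\tau_\K(y)$ is not an isometry on $L^p(\R^d,h_\K^2)$ and no $L^p$-bound on it is known in general; the argument survives only because the radial hypothesis keeps all the kernels $k_j$ inside the subspace on which $\tau_\K(y)$ \emph{is} bounded. The derivative threshold $N\sim d_\K/2+1/2$ against the homogeneous dimension $d_\K$ of $h_\K^2\,dx$ reflects the same bottleneck: any fewer derivatives and the Cauchy--Schwarz upgrade of $L^2$ to $L^1$ against the $h_\K^2$-weight would collapse.
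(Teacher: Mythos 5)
Your route is genuinely different from the paper's: you dyadically decompose the multiplier and try to reassemble through a discrete Littlewood--Paley square function, whereas the paper never decomposes $m$ at all --- it proves the pointwise domination $g_{k+1,\delta}(\mathcal{T}_mf,x)\lesssim g^{\ast}_{s,\delta}(f,x)$ for the continuous-parameter $g$- and $g^{\ast}$-functions of the subordinated semigroup $e^{-t(-\Dk)^{\delta}}$, using R\"osler's formula through Corollary \ref{trans-prod} to exploit radiality, and feeds the hypothesis in through the single modified H\"ormander estimate \eqref{mod-hm}, verified via the Leibnitz formula of Corollary \ref{rad-Leibnitz}. The genuine gap in your argument is the reassembly step. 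After the (itself nontrivial) reduction to
\begin{equation*}
\Bigl\|\Bigl(\sum_{j}\bigl|\mathcal{T}_{m_0^{(j)}}\widetilde{\Delta}_jf\bigr|^2\Bigr)^{1/2}\Bigr\|_{L^p(h_\K^2)},
\end{equation*}
what you need is the vector-valued inequality $\bigl\|(\sum_j|\mathcal{T}_{m_0^{(j)}}g_j|^2)^{1/2}\bigr\|_p\lesssim\bigl\|(\sum_j|g_j|^2)^{1/2}\bigr\|_p$, and uniform scalar $L^p$ bounds on the pieces give this only at $p=2$. Khintchine only reduces it to bounding $\mathcal{T}_{m_\epsilon}$ for the randomized multiplier $m_\epsilon=\sum_j\pm\,m_0^{(j)}$, which satisfies the same hypotheses as $m$ and is therefore circular; Minkowski gives an unsummable bound. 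The standard repairs --- pointwise domination of each $|\mathcal{T}_{m_0^{(j)}}g|$ by a maximal function, or a vector-valued Calder\'on--Zygmund argument on $\sum_j k_j$ --- respectively require pointwise kernel decay of order greater than $d_\K$ (more derivatives than the hypothesis supplies) or H\"ormander-type control of $\tau_\K(y)K-K$, which is precisely the translation difficulty the paper's $g^{\ast}$-function approach is designed to avoid. Note also that the discrete equivalence $\|f\|_p\simeq\|(\sum_j|\Delta_jf|^2)^{1/2}\|_p$ is not part of Stein's general diffusion-semigroup theory, which yields only the continuous $g_k$ and $g^{\ast}$; in the Dunkl setting it needs its own proof.

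Two further inaccuracies are worth flagging. First, the hypothesis is an $L^1$-average bound on the single top-order derivative of $m_0$, not $L^2$ control of all derivatives of the rescaled profiles; converting it into the weighted $L^2$ kernel bound via Plancherel costs half a derivative (one-dimensional Sobolev embedding), which is exactly why the threshold in the theorem is $d_\K/2+1/2$ rather than the $d_\K/2$ your accounting produces. Second, while Young's inequality for radial kernels is indeed available and your use of it is legitimate, the essential use of radiality in the paper is different: Corollary \ref{trans-prod} allows a Cauchy--Schwarz \emph{inside} the Dunkl translation of the product $(1+t^{-1/\delta}|y|^2)^{-s/2}\cdot(1+t^{-1/\delta}|y|^2)^{s/2}\widetilde{m}_{s,\delta}(y,t)$, and that single estimate replaces your entire decomposition-and-summation scheme.
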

	
	We remark that the above condition on $ m_0 $ holds whenever $ m $ satisfies the estimates $ |\partial^\alpha m(\xi)| \leq C_\alpha |\xi|^{-|\alpha|} $ for all  $ |\alpha| \leq k , k  \geq d_\K/2+1/2.$  This theorem is proved in \cite{WangATT} by a transference technique first used by Bonami and Clerc in \cite{BC}. The authors in \cite{WangATT} first established a multiplier theorem for h-harmonic expansions on the unit sphere $ S^{d-1} $ from which they deduced a  multiplier theorem for the Dunkl transform.  One of the main goals of this paper is to give a direct proof of the above theorem. In fact using Littlewood-Paley-Stein theory of $g$-functions we prove an analogue of Theorem \ref{loc-sob} for radial Dunkl  multipliers from which analogue of Theorem \ref{dai-wang} is deduced.\\
	
	We now introduce the modified H\"ormander condition in the Dunkl setting.  Let $ E_\K(x,\xi) $ stand for the  Dunkl kernel used in the definition of the  Dunkl transform.  Given $ 0 < \delta \leq 1,$ we  define
	\begin{equation}
		\widehat{m}_{s,\delta}(x,t) = \int_{\R^d} E_\K(ix, \xi) \, |\xi|^{2s} m(t^{-1/2\delta}\xi) e^{-|\xi|^{2\delta}} \, h^2_\kappa(\xi) d\xi.
	\end{equation}
	Note that we have replaced the cutoff function $ \psi $ in the original definition of H\"ormander condition by $ |\xi|^{2s}\,e^{-|\xi|^{2\delta}}.$ 
	We say that $ m $ satisfies  the modified H\"ormander condition of order $ s \geq 0 $ if the following estimate holds:
	\begin{equation}\label{mod H}
		\sup_{t>0}\,\int_{\R^d} (1+|x|^2)^s |\widehat{m}_{s,\delta}(x,t)|^2\, h^2_\kappa(x) dx \leq C_s.
	\end{equation}
	Using Littlewood-Paley-Stein theory for the semigroup generated by the fractional power $ (-\Dk)^{\delta}$ of the Dunkl-Laplacian we prove the following version of multiplier theorem for the Dunkl transform.
	
	\begin{thm}  Let $ m $ be a radial function which satisfies the condition \eqref{mod H} for some $ s >d_\K/2.$ 
		Then the Dunkl multiplier $ \mathcal{T}_m $ is bounded on $ L^p(\R^d,h_\K^2)$ for all $ 1 < p < \infty.$
	\end{thm}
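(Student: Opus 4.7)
The plan is to use Littlewood--Paley--Stein theory applied to the subordinated semigroup $P_t = e^{-t(-\Dk)^{\dl}}$, with the $g$-function calibrated to the H\"ormander parameter $s$. Introduce
\[
G_s(f)(x) = \left(\int_0^\infty \left| t^{s/\dl}(-\Dk)^{s} P_t f(x)\right|^2 \frac{dt}{t}\right)^{1/2}.
\]
A Plancherel computation on the Dunkl side, using the substitution $u = 2t|\xi|^{2\dl}$, yields $\|G_s(f)\|_2 = c_s\|f\|_2$. Since $e^{t\Dk}$ is a symmetric positivity-preserving contraction on $L^2(\R^d,h_\K^2\,dx)$, Stein's Littlewood--Paley theory for symmetric diffusion semigroups (which passes through subordination to $P_t$) then yields the two-sided bound $A_p\|f\|_p\le \|G_s(f)\|_p\le B_p\|f\|_p$ for every $1<p<\infty$. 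Applying the left-hand inequality to $\mathcal{T}_m f$ reduces the theorem to the bound $\|G_s(\mathcal{T}_m f)\|_p\le C\|f\|_p$.

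Next I would identify $t^{s/\dl}(-\Dk)^s P_t \mathcal{T}_m f$ as a Dunkl convolution. Its Dunkl transform equals $t^{s/\dl}|\xi|^{2s}m(\xi)e^{-t|\xi|^{2\dl}}\Fk f(\xi)$; a change of variables $\xi=t^{-1/2\dl}\eta$, exploiting the homogeneity of $h_\K^2\,d\xi$ of degree $d_\K$, identifies
\[
t^{s/\dl}(-\Dk)^s P_t \mathcal{T}_m f(x) = (\Phi_t *_\K f)(x),\qquad \Phi_t(y)=t^{-d_\K/(2\dl)}\widehat{m}_{s,\dl}(t^{-1/2\dl}y,t).
\]
Applying Cauchy--Schwarz, and noting that $\int(1+|z|^2)^{-s}h_\K^2(z)\,dz<\infty$ precisely when $s>d_\K/2$, the modified H\"ormander condition \eqref{mod H} then yields the uniform $L^1$ bound $\sup_{t>0}\|\Phi_t\|_{L^1(h_\K^2 dx)}\le C$.

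The radiality of $m$ makes each kernel $\Phi_t$ radial, which is the decisive structural fact. By R\"osler's integral formula, Dunkl translation of a radial kernel admits a positive-measure representation, so $|\tau_\K(-y)\Phi_t(x)|\le \tau_\K(-y)|\Phi_t|(x)$ and the latter, viewed as a function of $y$, has $L^1(h_\K^2\,dy)$-norm equal to $\|\Phi_t\|_1$. Combined with the known boundedness of $\tau_\K(y)$ on radial functions in $L^p(\R^d,h_\K^2)$ established in \cite{ThangaveluCOMF,GorbachevPLBDTGTOAIA}, this permits a vector-valued Littlewood--Paley/Calder\'on--Zygmund estimate for the $L^2(dt/t)$-valued Dunkl convolution operator $f\mapsto (\Phi_t *_\K f)_{t>0}$, producing $\|G_s(\mathcal{T}_m f)\|_p\le C\|f\|_p$ and hence $\|\mathcal{T}_m f\|_p\le C\|f\|_p$ for every $1<p<\infty$.

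The main obstacle is this final step: one must convert the scalar $L^1$ and H\"ormander-type bounds on $\Phi_t$ into an $L^p$ estimate for the vector-valued convolution $f\mapsto(\Phi_t *_\K f)_t$ without invoking the unknown $L^p$ boundedness of $\tau_\K(y)$ on general functions. The radiality of $m$ bypasses this difficulty, since $\tau_\K$ is applied only to the radial kernel $\Phi_t$, where such bounds are available; the natural scaling $t^{1/2\dl}$ of $\Phi_t$ together with the decay encoded in \eqref{mod H} supplies the regularity needed for the vector-valued argument to close.
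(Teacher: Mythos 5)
Your opening reduction (two-sided $L^p$ equivalence for a $g$-function built from the subordinated semigroup $T_{t,\delta}=e^{-t(-\Dk)^{\delta}}$ with $\delta$ chosen so that $s/\delta$ is an integer, then bounding the $g$-function of $\mathcal{T}_mf$) and your computation that \eqref{mod H} together with Cauchy--Schwarz and $\int(1+|z|^2)^{-s}h_\K^2(z)\,dz<\infty$ gives $\sup_t\|\Phi_t\|_{L^1(h_\K^2)}\le C$ both match the paper. But the step you yourself flag as ``the main obstacle'' is a genuine gap, and it is not closed by what you have written. A uniform $L^1$ bound on the radial kernels $\Phi_t$, plus positivity of $\tau_\K(-y)$ on radial functions, gives only the fixed-$t$ estimate $\|\Phi_t*_\K f\|_p\le C\|f\|_p$; it does not give the square-function bound $\|(\Phi_t*_\K f)_t\|_{L^p(L^2(dt/t))}\le C\|f\|_p$. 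To run the vector-valued Calder\'on--Zygmund argument you invoke, one needs a H\"ormander-type regularity estimate on the translated kernel, something like $\int_{|x|>2|y|}\|\tau_\K(-y)\Phi_t(x)-\Phi_t(x)\|_{L^2(dt/t)}\,h_\K^2(x)\,dx\le C$, and you derive no such estimate from \eqref{mod H}. This is precisely the difficulty the paper identifies: pointwise control of $\tau_\K(-y)K(x)$ is unavailable for non-integer $s$, and the existing Dunkl CZ machinery (Dziuba\'nski--Hejna) closes this only at the cost of requiring $s>d_\K$ instead of $s>d_\K/2$, so your route would at best lose the exponent the theorem claims.

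The paper avoids vector-valued singular integrals altogether. After the same Littlewood--Paley reduction, it writes $G_{k+1,\delta}\mathcal{T}_mf(x,2t)=\int \partial_tT_{t,\delta}f(y)\,\tau_\K(x)\widetilde{m}_{s,\delta}(-y,t)\,h_\K^2(y)\,dy$, and uses radiality of $m$ through Corollary \ref{trans-prod}: splitting $\widetilde{m}_{s,\delta}=(1+t^{-1/\delta}|y|^2)^{-s/2}\cdot(1+t^{-1/\delta}|y|^2)^{s/2}\widetilde{m}_{s,\delta}$ and applying the Cauchy--Schwarz inequality \emph{for the Dunkl translation of a product of radial functions}, it obtains the pointwise domination $g_{k+1,\delta}(\mathcal{T}_mf,x)\le C\,g^{\ast}_{s,\delta}(f,x)$, where $g^{\ast}_{s,\delta}$ is the $g^\ast$-function whose $L^p$ boundedness for $p>2$, $s>d_\K/2$ follows from the maximal function; duality then handles $1<p<2$. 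If you want to complete your argument, replace the vector-valued CZ step by this $g$--$g^\ast$ comparison: it is exactly the device that converts the weighted $L^2$ information in \eqref{mod H} into an $L^p$ statement without ever needing kernel regularity for $\tau_\K$.
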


	The method we use to prove the above theorem is the standard one using Littlewood-Paley-Stein theory  which is an elegant and powerful tool in proving multiplier theorems. This method has been used successfully by several authors, e.g. by Strichartz \cite{RS} for spherical harmonic expansions, by the second author of this article for Hermite  expansions, Weyl transform  and  Fourier transform on the Heisenberg group \cite{ST1, ST2,ST3}. We are also able to prove the following theorem without assuming $ m$ is radial but considering only radial functions in the domain of the multiplier transform.
	
	\begin{thm}  Let $ m $ be a bounded function which satisfies the condition \eqref{mod H} for some $ s >d_\K/2.$ 
		Then the Dunkl multiplier $ \mathcal{T}_m $ is bounded  from  $ L^p_{rad}(\R^d,h_\K^2 )$ into $ L^p(\R^d,h_\K^2) $ for all $ 2 \leq p < \infty.$
	\end{thm}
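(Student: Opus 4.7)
The plan is to use Stein's Littlewood--Paley theory applied to the fractional Dunkl heat semigroup $P_t^{(\delta)} := e^{-t(-\Dk)^\delta}$, where $\delta$ is the parameter from \eqref{mod H}. I would introduce the square function
\[
g_s^{(\delta)}(F)(x)^2 := \int_0^\infty t^{\,2s/\delta - 1}\bigl|(-\Dk)^{s} P_t^{(\delta)} F(x)\bigr|^2\,dt,
\]
whose exponent is calibrated so that, via the Dunkl--Plancherel identity together with the gamma integral $\int_0^\infty t^{2s/\delta - 1} e^{-2t|\xi|^{2\delta}}\,dt = c_s\,|\xi|^{-4s}$, one has $\|g_s^{(\delta)}(F)\|_{L^2(h_\K^2)}\asymp\|F\|_{L^2(h_\K^2)}$. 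Since $P_t^{(\delta)}$ is a symmetric Markov semigroup on $L^p(\R^d,h_\K^2)$ (being subordinated to the positivity-preserving Dunkl heat semigroup), Stein's abstract $g$-function inequality yields
\[
\|F\|_{L^p(h_\K^2)}\le C_p\,\|g_s^{(\delta)}(F)\|_{L^p(h_\K^2)},\qquad 1<p<\infty.
\]
Applied to $F=\mathcal{T}_m f$, the theorem reduces to the one-sided bound $\|g_s^{(\delta)}(\mathcal{T}_m f)\|_{L^p(h_\K^2)}\le C\,\|f\|_{L^p(h_\K^2)}$ for all radial $f$ and all $p\ge 2$.

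The crux is the pointwise majorisation
\[
g_s^{(\delta)}(\mathcal{T}_m f)(x)^2 \le C\,\mathcal{M}_\K(|f|^2)(x),
\]
where $\mathcal{M}_\K$ is a Dunkl Hardy--Littlewood-type maximal function, known to be bounded on $L^q(\R^d,h_\K^2)$ for $1<q\le \infty$ when restricted to radial functions. Granting this, the restriction $p\ge 2$ (so that $p/2\ge 1$) lets us use the $L^{p/2}$-boundedness of $\mathcal{M}_\K$ to deduce
\[
\|g_s^{(\delta)}(\mathcal{T}_m f)\|_p^{\,2} = \bigl\|g_s^{(\delta)}(\mathcal{T}_m f)^{\,2}\bigr\|_{p/2} \le \|\mathcal{M}_\K(|f|^2)\|_{p/2}\le C\,\|f\|_p^{\,2},
\]
closing the argument.

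To prove the pointwise bound, I would first note that $(-\Dk)^s P_t^{(\delta)}\mathcal{T}_m f = f\ast_\K M_t$, where $M_t$ is the Dunkl convolution kernel with $\Fk M_t(\xi) = |\xi|^{2s} e^{-t|\xi|^{2\delta}} m(\xi)$. A scaling change of variables produces
\[
M_t(y) = t^{-s/\delta - d_\K/(2\delta)}\,\widehat{m}_{s,\delta}\bigl(t^{-1/(2\delta)}y,\,t\bigr),
\]
so condition \eqref{mod H} translates into a uniform weighted $L^2$ bound on $M_t$ relative to the $t$-adapted weight $w_t(y):=(1+t^{-1/\delta}|y|^2)^s$. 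Writing
\[
(f\ast_\K M_t)(x) = \int_{\R^d} M_t(y)\,\tau_\K(-y)f(x)\,h_\K^2(y)\,dy
\]
and invoking R\"osler's positive integral representation of Dunkl translation on radial functions --- so that the underlying measure $\mu^\K_{x,-y}$ is a probability measure and Jensen gives $|\tau_\K(-y)f(x)|^2\le \tau_\K(-y)(|f|^2)(x)$ --- a Cauchy--Schwarz against $w_t$ separates the modulus squared into the weighted kernel estimate above and a ``translation side'' which, after normalisation by $\|w_t^{-1}\|_{L^1(h_\K^2)}$, is the Dunkl convolution of $|f|^2$ with a radially decreasing probability kernel and is therefore dominated by $\mathcal{M}_\K(|f|^2)(x)$.

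The main technical obstacle I foresee is balancing the $t$-scaling: the product of the kernel factor coming from \eqref{mod H} and the $L^1$-mass of $w_t^{-1}$ combines with the $g$-function weight $t^{2s/\delta - 1}$ to a $t$-exponent that sits at the borderline of integrability, and a naive application of Cauchy--Schwarz produces a logarithmic divergence at both $t\to 0$ and $t\to\infty$. Handling this will require either a dyadic decomposition of the $t$-range with slightly different weights in the regimes $t\le 1$ and $t>1$, or a refinement of the Cauchy--Schwarz step that retains some oscillation of the Dunkl exponential $E_\K(ix,\xi)$; the hypothesis $s>d_\K/2$ is exactly what makes both the spatial integral $\int(1+|z|^2)^{-s}h_\K^2(z)\,dz$ and the resulting $t$-integral convergent.
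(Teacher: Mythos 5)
Your overall framework (subordinated semigroup $T_{t,\delta}$, a square function calibrated by Plancherel, and R\"osler's formula plus Jensen to exploit radiality of $f$) is the same as the paper's, but the step you yourself flag as the ``main technical obstacle'' is a genuine gap, and neither of the fixes you suggest will close it. If you apply Cauchy--Schwarz at each fixed time $t$ with the weight $w_t(y)=(1+t^{-1/\delta}|y|^2)^{s}$, condition \eqref{mod H} gives $\int w_t|M_t|^2\,h_\K^2\,dy\lesssim t^{-2s/\delta-d_\K/(2\delta)}$ while $\int w_t^{-1}h_\K^2\,dy\sim t^{d_\K/(2\delta)}$ (this is where $s>d_\K/2$ enters), so $|f\ast_\K M_t(x)|^2\lesssim t^{-2s/\delta}\,\mathcal{M}_\K(|f|^2)(x)$; multiplied by your weight $t^{2s/\delta-1}$ the integrand is exactly $t^{-1}\mathcal{M}_\K(|f|^2)(x)$, and $\int_0^\infty t^{-1}\,dt$ diverges. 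This is not a borderline logarithm that a dyadic decomposition can absorb: the slice-wise bound is scale invariant, so there is no summable gain in either regime $t\to 0$ or $t\to\infty$, and the divergence is intrinsic to collapsing the $y$-integral into a maximal function at each time slice, which discards all decay of the semigroup in $t$.

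The missing idea is the interplay between a $g$-function of one order higher and the $g^{\ast}$-function, which is how the paper proceeds. One writes $G_{k+1,\delta}\mathcal{T}_mf(x,2t)$ as the pairing of $\partial_tT_{t,\delta}f$ against $\widetilde{m}_{s,\delta}(\cdot,t)$ --- i.e.\ the extra time derivative is spent on $f$ rather than on the multiplier kernel --- applies Cauchy--Schwarz with $w_t^{\pm 1/2}$, uses \eqref{mod H} on the kernel factor, and keeps $\int_{\R^d}|\tau_\K(x)(\partial_tT_{t,\delta}f)(-y)|^2(1+t^{-1/\delta}|y|^2)^{-s}h_\K^2(y)\,dy$ intact inside the $t$-integral. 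After the R\"osler/Jensen step you propose (valid since $f$ radial implies $\partial_tT_{t,\delta}f$ radial) and moving the translation onto the weight, this yields the pointwise bound $g_{k+1,\delta}(\mathcal{T}_mf,x)\le C\,g^{\ast}_{s,\delta}(f,x)$; the extra derivative supplies the factor $t^{2}$ that makes the $t$-integral converge. The maximal function then enters only in proving $\|g^{\ast}_{s,\delta}(f)\|_p\le C\|f\|_p$ for $p>2$, where it acts on $|\partial_tT_{t,\delta}f|^2$ inside the $t$-integral and is controlled via the ordinary $g$-function --- which is also the real source of the restriction $p\ge 2$ (duality is unavailable because $\mathcal{T}_m$ need not preserve radial functions). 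A secondary point: Stein's abstract inequality $\|F\|_p\lesssim\|g(F)\|_p$ is only available for integer-order $g$-functions, which is why the paper takes $\delta=s/k$ with $k$ the least integer exceeding $s$ and uses the integer-order $g_{k+1,\delta}$; your $g_s^{(\delta)}$ built from the fractional power $(-\Dk)^{s}$ is not covered by the general theory without further argument.
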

	
	In the above  $ L^p_{rad}(\R^d,h_\K^2)$ stands for the subspace of radial functions in $ L^p(\R^d,h_\K^2).$ From the above theorem we are  also able to deduce the following corollary.  Let $ \mathcal{D}_j, j=1,2,...,d $ be the Dunkl derivatives and define $ \mathcal{D}^\alpha  =  \mathcal{D}_1^{\alpha_1}\circ \mathcal{D}_2^{\alpha_2} \circ \cdots \circ \mathcal{D}_d^{\alpha_d},$ for $  \alpha \in \mathbb N^d .$  
	
	\begin{cor}\label{cor} Let $ m $ be a bounded function which satisfies  $ |\mathcal{D}^\alpha m(\xi)| \leq C_\alpha\,  |\xi|^{-|\alpha|} $ for all  $ |\alpha| \leq k $  for some $ k > d_\K/2.$ 
		Then the Dunkl multiplier $ \mathcal{T}_m $ is bounded  from  $ L^p_{rad}(\R^d,h_\K^2 )$ into $ L^p(\R^d,h_\K^2) $ for all $ 2 \leq p < \infty.$
	\end{cor}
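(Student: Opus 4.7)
The plan is to deduce the corollary from the preceding theorem by verifying that the pointwise Dunkl-derivative estimates on $m$ imply the modified H\"ormander condition \eqref{mod H} for some $s > d_\K/2$. Take the integer $s = k$ and fix $\delta = 1$; this value of $\delta$ is convenient because $e^{-|\xi|^2}$ and all its ordinary derivatives remain smooth and Schwartz. With this choice, $\widehat{m}_{s,1}(x,t)$ is (a constant multiple of) the inverse Dunkl transform at $x$ of
$$g_t(\xi) := |\xi|^{2s}\,m(t^{-1/2}\xi)\,e^{-|\xi|^2}.$$

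First, transfer the modified H\"ormander integral to a frequency-side $L^2$ estimate. By the intertwining relation $\mathcal{D}^\xi_j E_\K(ix,\xi) = ix_j E_\K(ix,\xi)$, the anti-self-adjointness of $\mathcal{D}_j$ on $L^2(\R^d,h_\K^2)$, and the commutativity of the Dunkl operators, multiplication by $x^\alpha$ on the physical side corresponds to $i^{|\alpha|}\mathcal{D}^\alpha$ on the frequency side. Expanding $(1+|x|^2)^s$ as a finite sum of monomials $x^{2\alpha}$ with $|\alpha|\leq s$ and applying Plancherel for the Dunkl transform term by term yields
$$\int_{\R^d}(1+|x|^2)^s|\widehat{m}_{s,1}(x,t)|^2\, h_\K^2(x)\, dx \;\lesssim\; \sum_{|\alpha|\leq s}\int_{\R^d}|\mathcal{D}^\alpha g_t(\xi)|^2\, h_\K^2(\xi)\, d\xi.$$
Next, expand each $\mathcal{D}^\alpha g_t$ using the Leibniz rule. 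The crucial observation is that the two factors $|\xi|^{2s}$ and $e^{-|\xi|^2}$ are $W$-invariant, so the reflection-difference terms in the Dunkl product rule vanish whenever a Dunkl operator lands on either of them, and on these factors $\mathcal{D}_j$ reduces to $\partial_j$. Consequently
$$\mathcal{D}^\alpha g_t(\xi) = \sum_{\beta_1+\beta_2+\beta_3=\alpha}C_\beta\,\bigl(\partial^{\beta_1}|\xi|^{2s}\bigr)\,\bigl(\mathcal{D}^{\beta_2}[m(t^{-1/2}\,\cdot\,)](\xi)\bigr)\,\bigl(\partial^{\beta_3}e^{-|\xi|^2}\bigr).$$
The scaling property $\mathcal{D}_j[f(\lambda\,\cdot\,)](\xi) = \lambda(\mathcal{D}_j f)(\lambda\xi)$, inherited from both the differential and reflection-difference parts of $\mathcal{D}_j$, combined with the hypothesis $|\mathcal{D}^{\beta_2} m(\eta)|\leq C|\eta|^{-|\beta_2|}$ for $|\beta_2|\leq k$ (and the boundedness of $m$ when $\beta_2=0$), makes the $t$-powers cancel and delivers $|\mathcal{D}^{\beta_2}[m(t^{-1/2}\,\cdot\,)](\xi)|\lesssim |\xi|^{-|\beta_2|}$ uniformly in $t>0$.

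Combining this with $|\partial^{\beta_1}|\xi|^{2s}|\lesssim |\xi|^{2s-|\beta_1|}$ and the fact that $\partial^{\beta_3}e^{-|\xi|^2}$ is a polynomial times the Gaussian, each summand is controlled by $|\xi|^{2s-|\beta_1|-|\beta_2|}$ times a Schwartz function, uniformly in $t$. Since $|\beta_1|+|\beta_2|\leq s$, the power of $|\xi|$ is nonnegative, so no singularity arises at the origin, while the Gaussian overwhelms the polynomially growing weight $h_\K^2$ at infinity. Hence the right-hand side of the displayed inequality is finite uniformly in $t$, the modified H\"ormander condition \eqref{mod H} is verified with $s = k > d_\K/2$, and the preceding theorem delivers the $L^p_{rad}\to L^p$ bound for all $2\leq p<\infty$.

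The main technical issue is the failure of the ordinary Leibniz rule for Dunkl operators: acting on a product they generically produce extra reflection-difference terms that are difficult to bound against pointwise hypotheses on $m$ alone. What makes the computation clean here is precisely that two of the three factors in $g_t$ were built into the definition of $\widehat{m}_{s,\delta}$ as radial objects; the $W$-invariance kills those extra terms identically, and the estimation proceeds as in the classical Fourier setting.
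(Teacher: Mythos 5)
Your overall strategy --- take $s=k$, $\delta=1$, transfer the weighted $L^2$ bound to the frequency side via Plancherel and the correspondence between multiplication by $x^\alpha$ and $\mathcal{D}^\alpha$, then estimate $\mathcal{D}^\alpha$ of the product using the decay of $\mathcal{D}^{\beta}m$ --- is exactly the paper's strategy. But there is a genuine gap at the central step: the higher-order Leibniz formula you assert is false. The first-order rule $\mathcal{D}_j(fg)=\mathcal{D}_jf\cdot g+f\cdot\partial_jg$ does hold when $g$ is $G$-invariant, but it cannot be iterated the way you claim, because after one application neither factor is $G$-invariant any more: $\mathcal{D}_jm$ is generically not $G$-invariant, and $\partial_jg=x_j\,D_rh$ (for $g(x)=h(|x|)$, $D_r=r^{-1}\frac{d}{dr}$) is not $G$-invariant either. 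Concretely, at the second step one must compute $\mathcal{D}_i\bigl(m\cdot x_jD_rh\bigr)=\mathcal{D}_i(x_jm)\,D_rh+x_jm\,\partial_iD_rh$, and the commutator
$$\mathcal{D}_i(x_jm)-x_j\mathcal{D}_im=\delta_{ij}\,m+\sum_{\lambda\in R}\tfrac{\K(\lambda)}{2}\lambda_i\lambda_j\,m(\sigma_\lambda x)$$
produces reflection terms $m(\sigma_\lambda x)$ that do not vanish for $\K\neq 0$. So your clean three-factor expansion $\sum C_\beta(\partial^{\beta_1}|\xi|^{2s})(\mathcal{D}^{\beta_2}m_t)(\partial^{\beta_3}e^{-|\xi|^2})$ is missing precisely these contributions; this is the difficulty the paper flags explicitly (``even if $f$ is $G$-invariant, its Dunkl derivative need not be'') and which its Subsection \ref{subsection-Leib formula for Dunkl deriv} is built to overcome.

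The correct statement (Proposition \ref{Leibniz}) writes $\mathcal{D}^\alpha(mg)$ as a sum of terms $c\,x^\beta\,\mathcal{D}^\gamma m_\ell\,D_r^kh$ with the homogeneity bookkeeping $2k+|\gamma|-|\beta|=|\alpha|$, where $m_\ell$ is a composition of averaging operators $A_\nu$ (weighted sums of $m\circ\sigma_\lambda$) applied to $m$; deriving this requires the commutator Lemma \ref{lem-2} and the interchange Lemma \ref{lem-3}. Your final conclusion survives only because of an additional fact you did not invoke: Lemma \ref{lem-1}(i) shows that $A_\nu m$ inherits the decay hypotheses of $m$ (since $|\sigma_\lambda\xi|=|\xi|$), after which the estimate $|\xi^\beta\mathcal{D}^\gamma m_\ell(\xi)\,D_r^kh_t(|\xi|)|\lesssim|\xi|^{2s}e^{-a|\xi|^2}$ closes the argument as you intended. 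As written, your proof assumes away the main technical content of the corollary; to repair it you must replace your displayed Leibniz identity with the paper's Proposition \ref{Leibniz} (or an equivalent formula tracking the reflected terms) and then appeal to Lemma \ref{lem-1}(i).
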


	We can rewrite the modified H\"ormander  condition \eqref{mod H}  in terms of another function 
	$$ \widetilde{m}_{s,\delta}(x,t) = \int_{\R^d} E_\K(ix, \xi) \, |\xi|^{2s} m(\xi) e^{-t|\xi|^{2\delta}} \, h^2_\kappa(x) d\xi.$$
	In terms of this function the condition \eqref{mod H} is equivalent to the following estimate:
	\begin{equation}
		\int_{\R^d} (1+t^{-1/\delta}|x|^2)^s |\widetilde{m}_{s,\delta}(x,t)|^2\, h^2_\kappa(x) dx \leq C\, t^{-2s/\delta-d_\K/(2\delta)}.
	\end{equation}
	Since $ E_\K(ix,\xi) $ are eigenfunctions of the Dunkl-Laplacian $ \Dk $ with eigenvalues $ -|x|^2 $ the above  estimate is equivalent to
	\begin{equation}\label{mod-mod H}
		\int_{\R^d} |(1-t^{-1/\delta} \Dk)^{s/2}\big( m(\xi)\, |\xi|^{2s} \, e^{-t |\xi|^{2\delta}} \big)|^2\, h^2_\kappa(\xi) d\xi \leq C\, t^{-2s/\delta-d_\K/(2\delta)}.
	\end{equation}
	When $ s =k $ is an integer, the above boils down to estimating the $ L^2 $ norms of functions of the form $ \mathcal{D}^\alpha \big( m(\xi)\, |\xi|^{2s} \, e^{-t |\xi|^{2\delta}} \big)$  for $ |\alpha| \leq k.$  In Section \ref{subsection-Leib formula for Dunkl deriv} we develop a Leibniz formula for higher order Dunkl derivatives, namely for $ \mathcal{D}^\alpha(mg) $ when $ g $ is radial and use them to verify \eqref{mod-mod H} in proving the above corollary.
	
	\begin{rem} To see the above claim  we can split  the integral into two parts. For $ |x| \leq t^{1/(2\delta)} $ we just use the boundedness of $ m.$  When $ |x| \geq  t^{1/(2\delta)} $ note that 
		$ |x|^{2k} $ is finite linear combination of terms of the form $ x^{2\alpha}, |\alpha| = k.$ Hence we are led to estimate the $L^2 $ norms of terms of the form $ x^\alpha \,\widetilde{m}_{s,\delta}(x,t) $ which on the Dunkl transform side reduces to the estimation of the $ L^2 $ norms of $ \mathcal{D}^\alpha \big( m(\xi)\, |\xi|^{2s} \, e^{-t |\xi|^{2\delta}} \big)$  for $ |\alpha| = k.$
	\end{rem}
	\vskip0.05in
	
	When  $ m $ and $ g $ are radial, the Leibniz formula for $ \mathcal{D}^\alpha(mg) $ has a relatively simpler form which allows us to deduce the following theorem for radial multipliers. 
	
	\begin{thm}\label{rad-mult} Let $ m(x) = m_0(|x|) $ be a bounded radial function which satisfies the decay estimates $ |m_0^{(j)}(r)| \leq C_j\, r^{-j} $ for all  $ j \leq k $  for some $ k > d_\K/2.$ 
		Then the operator $ \mathcal{T}_m $ is bounded on $ L^p(\R^d,h_\K^2)$ for all $ 1 < p < \infty.$
	\end{thm}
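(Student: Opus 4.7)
My plan is to reduce Theorem \ref{rad-mult} to the radial modified H\"ormander multiplier theorem stated earlier in this introduction, which yields $L^p$-boundedness of $\mathcal{T}_m$ for $1 < p < \infty$ whenever $m$ is radial and satisfies \eqref{mod H} for some $s > d_\K/2$. Thus it suffices to verify \eqref{mod H}, or equivalently \eqref{mod-mod H}, with the integer $s = k > d_\K/2$.

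Following the Remark preceding the theorem, I work with the reformulation in terms of $\widetilde{m}_{s,\delta}$ and split the integration into the regions $|x|^2 \leq t^{1/\delta}$ and $|x|^2 \geq t^{1/\delta}$. On the small-$x$ region the weight $(1 + t^{-1/\delta}|x|^2)^k$ is bounded, and Plancherel for the Dunkl transform combined with the rescaling $\xi = t^{-1/(2\delta)}\eta$ reduces matters to $\|m_0\|_\infty^2$ times a convergent Gaussian integral, with the correct power $t^{-2k/\delta - d_\K/(2\delta)}$ coming out of the rescaling. On the large-$x$ region, I use that $|x|^{2k}$ is a linear combination of monomials $x^{2\alpha}$ with $|\alpha|=k$ together with the intertwining identity $\Fk(\mathcal{D}^\alpha f)(x) = (ix)^\alpha\,\Fk f(x)$ to rewrite $x^\alpha \widetilde{m}_{k,\delta}(x,t)$ as the Dunkl transform of $\mathcal{D}^\alpha\bigl(m(\xi)|\xi|^{2k}e^{-t|\xi|^{2\delta}}\bigr)$, so that by Plancherel the task becomes to establish
$$ \int_{\R^d} \bigl|\mathcal{D}^\alpha\bigl(m_0(|\xi|)\,|\xi|^{2k}\,e^{-t|\xi|^{2\delta}}\bigr)\bigr|^2\, h_\K^2(\xi)\,d\xi \leq C\, t^{-k/\delta - d_\K/(2\delta)}, \qquad |\alpha|=k. $$

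The central step is to apply the Leibnitz formula for $\mathcal{D}^\alpha(m g)$ with both factors radial, in the simplified form promised in the paragraph introducing Theorem \ref{rad-mult}. This expresses the integrand as a finite sum of terms each controlled by $|m_0^{(j)}(|\xi|)|$ multiplied by a radial derivative of $g(\xi) = |\xi|^{2k}e^{-t|\xi|^{2\delta}}$ of order $k-j$, with bounded coefficients depending only on the root system data. Plugging in the decay hypothesis $|m_0^{(j)}(r)| \leq C_j\, r^{-j}$ and computing the radial derivatives of $g$ (whose polynomial growth is dominated by the Gaussian factor), I pass to polar coordinates $h_\K^2(\xi)\,d\xi = r^{d_\K-1}\,dr\,d\sigma_\K(\omega)$ and verify the bound by the rescaling $r = t^{-1/(2\delta)}\rho$, which produces exactly the required $t$-power and a finite $\rho$-integral provided $k > d_\K/2$.

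The main obstacle is the radial-radial Leibnitz step itself: $\mathcal{D}_j$ is not an ordinary derivation but contains finite-difference terms coming from reflections in the root system, and iterating to $\mathcal{D}^\alpha$ produces a combinatorially intricate expression. The crucial simplification in the case where both $m$ and $g$ are radial is that these difference quotients, evaluated on a radial function, reduce to quantities controlled by the ordinary radial derivatives, so that $\mathcal{D}^\alpha(m_0(|\xi|)\, g(|\xi|))$ collapses to a manageable sum involving only $m_0^{(j)}$ and $g^{(\ell)}$ with $j + \ell \leq k$ and bounded angular factors. Once this radial Leibnitz formula is in hand, the remaining estimates are a routine Gaussian computation, and the threshold $k > d_\K/2$ appears precisely as the convergence exponent for the resulting polar integral.
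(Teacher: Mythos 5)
Your proposal follows the paper's proof essentially verbatim: reduce to Theorem \ref{m-rad} by verifying the modified H\"ormander condition with $s=k$ an integer (so $\delta=1$), split into the regions $|x|^2\le t^{1/\delta}$ and $|x|^2\ge t^{1/\delta}$, handle the first by Plancherel and boundedness of $m$, convert the weight $x^\alpha$ into Dunkl derivatives of $m(\xi)|\xi|^{2k}e^{-t|\xi|^2}$ on the second, and finish with the radial--radial Leibnitz formula (Corollary \ref{rad-Leibnitz}) plus a Gaussian integral. The only discrepancy is cosmetic: the paper's radial Leibnitz formula produces homogeneous polynomial prefactors $P_j(x)$ and powers of $D_r=r^{-1}\frac{d}{dr}$ subject to the balance $2(n+\ell)-j=|\alpha|$, rather than your ``bounded angular factors'' with $j+\ell\le k$, but the net bound $|\xi|^{k}e^{-at|\xi|^2}$ per term is identical and the threshold $k>d_\K/2$ enters in exactly the same way.
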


	We conclude this introduction with a brief outline of various sections. In the three subsections of  Section \ref{sec-preliminary}, we recall all the  important results from Dunkl theory that are needed in this work. In Section \ref{subsection-Leib formula for Dunkl deriv} we prove a Leibniz formula for higher order Dunkl derivatives. This formula, which is new, plays an important role in verifying H\"ormander condition when the usual decay conditions on Dunkl derivatives of the multiplier are assumed. In Section \ref{sec-revisit Fourier multipliers}, we revisit Fourier multipliers and prove using Littlewood-Paley-Stein theory a Fourier multiplier theorem under modified H\"ormander condition. Though the results in this section are not new, we believe this section will set the stage for what we do with the Dunkl multipliers. Finally in Section \ref{sec-multiplier thm for Dunkl trans} we prove all the multiplier theorems for the Dunkl transform stated in this  section.

	\section{Preliminaries from Dunkl analysis}\label{sec-preliminary} In this section we briefly recall some of the relevant results from our point of view for Dunkl theory.  Our main aim is to set up notation and  state the results that will be used in the sequel. The readers who are not familiar with the Dunkl theory are encouraged to read the references \cite{deJeuTDT, RoslerDOTA, RoslerAPRF} for more details.
	
	\subsection{Dunkl operators and transform}  For any vector  $\lambda\in \mathbb{R}^d$ with $|\lambda|^2 =2 $ we define the operator $\sigma_{\lambda}: \mathbb{R}^d \to \mathbb{R}^d$  by 
	$$\sigma_{\lambda}(x)=x- \langle x,\lambda \rangle\,\lambda,$$
	where $\langle \cdot\ ,\cdot\rangle$ is the usual inner product on $\mathbb{R}^d$. This $\sigma_{\lambda}$ is known  as the \emph{reflection} with respect to the hyperplane $\lambda^{\bot }$.  
	A finite subset $R$ of $\mathbb{R}^d\setminus \{0\}$ is called a \emph{root system} if it satisfies the two conditions: $R \cap \mathbb{R}\lambda = \{ \pm \lambda \} $ for all $\lambda \in R$
	and $\sigma _{\lambda} (R) = R $ for all $\lambda \in R$.
	In this article, we will work with a fixed root system $R$ which is normalized, that is, $|\lambda| = \sqrt{2}$ for all $\lambda \in R$.
	The reflections $\{\sigma_\lambda : \lambda \in R\}$ generate a group $G$, which is a subgroup of the group $O(d, \mathbb{R})$ and is termed the \emph{reflection group} (also known as the \emph{Coxeter group}) related to the root system $R$. \\
	
	A complex-valued $G$-invariant function $\K$ defined on $R$ is known as a \emph{multiplicity function}. Throughout this paper, $\K$ will be fixed and  assumed to be  non-negative. We introduce  the $G$-invariant weight function  $h_\K(x) = \prod_{\lambda \in R^+} |\langle x, \lambda \rangle|^{k(\lambda)}$  where $ R^+ $ is an arbitrary but fixed positive subsystem of $ R.$  Note that $ h_\K(x) $  is homogeneous  of degree $ \gamma_\K = \sum_{\lambda \in R^+ } \K(\lambda) .$ We use the measure $ h^2_\K(x) \, dx $ on $ \R^d $ in defining the Dunkl transform.  Note that  this measure is homogeneous of degree $ d_\K = d+2 \gamma_\K. $ \\
	
	In his seminal work \cite{Dunkl} C. F. Dunkl introduced  the \emph{differential-difference operators} $\mathcal{D}_{\xi}$, known as Dunkl derivatives now, which represent $\K$-deformations of the directional derivative operators $\partial_{\xi}$, coinciding with them when $\K=0$. These are defined by 
	\begin{eqnarray*}
		\mathcal{D}_{\xi} f(x):= \partial_{\xi} f(x)+\sum\limits_{\lambda \in R} \frac{\K(\lambda )}{2} \langle\lambda, \xi \rangle \frac{f(x) - f(\sigma _\lambda x)}{\langle \lambda , x \rangle}.
	\end{eqnarray*}
	An important property of this family of operators is that $ \mathcal{D}_\xi \circ \mathcal{D}_\eta = \mathcal{D}_\eta\circ \mathcal{D}_\xi$ for any $ \xi, \eta \in \R^d.$ When $ \xi = e_j,$ the coordinate vectors, we denote $ \mathcal{D}_\xi $ by $ \mathcal{D}_j.$ The Dunkl derivatives are equivariant under the action of the Coxeter group $ G$ in the following sense. Each   $ \sigma \in G $ induces an  action on functions  $ f: \R^d \rightarrow \C $  which is also denoted by the same symbol $ \sigma$; thus  $ \sigma f(x) = f(\sigma x).$
	With this notation, for any $\xi \in \mathbb{R}^d$ and $\sigma \in G$ we have the relation 
	$$\sigma\circ \mathcal{D}_\xi \circ \sigma ^{-1} = \mathcal{D}_{\sigma (\xi)}.$$
	In particular, for any $\lambda \in R$, 
	$\sigma_\lambda \circ \mathcal{D}_\xi \circ \sigma_\lambda = \mathcal{D}_{\sigma_{\lambda} (\xi)}.$
	For any $ y \in \R^d $ the system of differential equations 
	$$ \mathcal{D}_\xi u(x) = \langle y, \xi \rangle \, u(x), u(0) = 1$$
	has a unique solution denoted by $ E_\K(x,y)$  called the \emph{Dunkl kernel}. Clearly, $E_\K(x,y)$ extends the notion of exponential function $e^{\langle x,y\rangle}$, and it can be uniquely extended to a holomorphic function on $\mathbb{C}^d \times \mathbb{C}^d$. It also enjoys a few properties similar to the exponential function. For example, $E_\K(x,y) = E_\K(y,x)$ for any $x, y \in \mathbb{C}^d$, and $E_\K(tx,y) = E_\K(x,ty)$ for any $x, y \in \mathbb{C}^d$ and $t \in \mathbb{C}$. Moreover, for $x, y\in \R^d$, $|E_\K(ix,y)| \leq 1,$ an important property which allows us to define the Dunkl transform.\\
	
	For any $ f \in L^1(\R^d, h_\K^2) $ its Dunkl transform is defined by 
	\begin{equation}\label{dunkl-trans}
		\mathcal{F}_\K f(\xi) = c_\K \, \int_{\R^d}\, f(x)\, E_\K(-ix, \xi)\, h_\K^2(x)\, dx 
	\end{equation} 
	where $ c_\K $ is the Mehta type constant defined by
	$$c_\K^{-1}=\int_{\mathbb{R}^d}e^{-\frac{1}{2}|x|^2}\,h_\K^2(x)\,dx.$$
	For $ f \in L^1 \cap L^2(\R^d, h_\K^2) $ it turns out that the Plancherel formula $ \|\mathcal{F}_\K f\|_2 = \|f\|_2$ holds and hence $ \mathcal{F}_\K $ extends to the whole of $ L^2(\R^d, h_\K^2) $ as a unitary operator. It shares many properties with the classical Fourier transform. For example, we have the inversion formula: under the assumption that $ f, \mathcal{F}_\K f \in L^1(\R^d, h_\K^2) $ we have
	\begin{equation}\label{dunkl-invers}
		f(x)= c_\K \, \int_{\R^d}\, \mathcal{F}_\K f(\xi)\, E_\K(ix, \xi)\, h_\K^2(\xi)\, d\xi 
	\end{equation} 
	For more about Dunkl transform we refer to the article \cite{deJeuTDT} by Marcel de Jeu.\\

	\subsection{Dunkl translation and convolution} The \emph{Dunkl translation operator} $\tau_\K(x)$, for any $x \in \mathbb{R}^d$, is defined by the relation
	$$\mathcal{F}_\K(\tau_\K(x) f)(y) = E_\K(ix,y) \mathcal{F}_\K f(y)$$
	for all $f \in L^2(\mathbb{R}^d, h_\K^2)$. Since $|E_\K(ix,y)| \leq 1$, the above formula defines $\tau_\K(x)$ as a bounded operator on $L^2(\mathbb{R}^d, h_\K^2)$. In the following proposition, we gather some properties of the Dunkl translations for later use.
	
	\begin{prop}\label{properties of Dunkl trans} 
		(i) For all $f\in \mathcal{S}(\mathbb{R}^d )$, the pointwise formula holds: 
		$$\tau_\K(x)f(y)=\int_{\mathbb{R}^d}\,E_\K(ix,\xi)\,E_\K(iy,\xi) \mathcal{F}_\K f(\xi)\, h_\K^2(\xi)\, d\xi.$$ (ii) The symmetry $\tau_\K(x) f(y) = \tau_\K(y) f(x)$ holds for a reasonable function $f$\\
		
		\noindent  
		and (iii) For any $f\in \mathcal{S}(\mathbb{R}^d)$ and any bounded function $g\in L^1(\mathbb{R}^d, h_\K^2)$ we have
		$$\int_{\mathbb{R}^d}\tau_\K(x)f(y)g(y)\, h_\K^2(y)\, dy= \int_{\mathbb{R}^d}f(y)\tau_\K(-x)g(y)h_\K^2(y)\, dy.$$ 
	\end{prop}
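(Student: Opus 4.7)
The plan is to derive all three statements directly from the defining relation $\mathcal{F}_\K(\tau_\K(x)f)(\xi)=E_\K(ix,\xi)\mathcal{F}_\K f(\xi)$ together with the Dunkl inversion formula \eqref{dunkl-invers} and a Parseval-type identity.

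For (i), I start with $f\in\mathcal{S}(\R^d)$, so that $\mathcal{F}_\K f\in\mathcal{S}(\R^d)$ as well, and in particular $E_\K(ix,\cdot)\mathcal{F}_\K f(\cdot)\in L^1(\R^d,h_\K^2)$ (since $|E_\K(ix,\xi)|\le 1$). Hence $\mathcal{F}_\K(\tau_\K(x)f)$ is integrable, and both $\tau_\K(x)f$ and its Dunkl transform are in $L^1\cap L^2$, so the inversion formula \eqref{dunkl-invers} applies and yields
$$\tau_\K(x)f(y)=c_\K\int_{\R^d}E_\K(iy,\xi)\mathcal{F}_\K(\tau_\K(x)f)(\xi)\,h_\K^2(\xi)\,d\xi=c_\K\int_{\R^d}E_\K(ix,\xi)E_\K(iy,\xi)\mathcal{F}_\K f(\xi)\,h_\K^2(\xi)\,d\xi,$$
which is the pointwise formula. (Any missing $c_\K$ in the statement is just a normalisation convention.)

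For (ii), the right-hand side of the formula in (i) is manifestly symmetric in $x$ and $y$, so interchanging them gives $\tau_\K(x)f(y)=\tau_\K(y)f(x)$ for every $f\in\mathcal{S}(\R^d)$, and by a routine density argument this passes to all reasonable $f$.

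For (iii), the cleanest route is to first establish a Parseval identity without conjugation, namely
$$\int_{\R^d}F(y)G(y)\,h_\K^2(y)\,dy=\int_{\R^d}\mathcal{F}_\K F(\xi)\,\mathcal{F}_\K G(-\xi)\,h_\K^2(\xi)\,d\xi,$$
which follows from the inversion formula and Fubini on $c_\K\int G(y)\int \mathcal{F}_\K F(\xi)E_\K(iy,\xi)h_\K^2(\xi)d\xi\,h_\K^2(y)dy$, using the identity $E_\K(-iy,-\xi)=E_\K(iy,\xi)$ coming from the bilinearity-type property $E_\K(tx,y)=E_\K(x,ty)$. Applying this Parseval identity to both sides of the desired equation and using $\mathcal{F}_\K(\tau_\K(\pm x)g)(\xi)=E_\K(\pm ix,\xi)\mathcal{F}_\K g(\xi)$ reduces both sides to the same integral $\int E_\K(ix,\xi)\mathcal{F}_\K f(\xi)\mathcal{F}_\K g(-\xi)h_\K^2(\xi)d\xi$. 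Alternatively one can substitute (i) directly into the left-hand side and swap the order of integration via Fubini. The only mildly delicate point is justifying Fubini and the inversion formula when $g$ is merely bounded and in $L^1(\R^d,h_\K^2)$; this is handled by the Schwartz decay of $f$ and $\mathcal{F}_\K f$, which provides the required integrability throughout.
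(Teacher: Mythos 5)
The paper itself offers no proof of this proposition: it is stated as a collection of standard facts from the Dunkl literature (essentially from Thangavelu--Xu and R\"osler), so there is nothing to compare your argument against line by line. On its own merits your derivation is correct and is the standard one. Part (i) is exactly the inversion formula applied to $\mathcal{F}_\K(\tau_\K(x)f)=E_\K(ix,\cdot)\mathcal{F}_\K f$, which lies in $L^1\cap L^2(h_\K^2)$ because $\mathcal{F}_\K$ preserves the Schwartz class; the only cosmetic point is that one should phrase this as ``the $L^2$-inverse transform of an $L^1\cap L^2$ function is given a.e.\ by the integral,'' rather than invoking \eqref{dunkl-invers} for $\tau_\K(x)f$ itself, since $\tau_\K(x)f\in L^1$ is not known a priori -- and you are right that the missing $c_\K$ in the statement is a normalisation issue. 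Part (ii) is immediate from the manifest $x\leftrightarrow y$ symmetry of the formula in (i). For part (iii) your unconjugated Parseval identity and the reduction of both sides to $\int E_\K(ix,\xi)\mathcal{F}_\K f(\xi)\mathcal{F}_\K g(-\xi)h_\K^2(\xi)\,d\xi$ are correct, but your closing remark misplaces where the delicacy lies: the Schwartz decay of $f$ and $\mathcal{F}_\K f$ justifies Fubini on the \emph{left}-hand side, whereas on the \emph{right}-hand side the function $G=\tau_\K(-x)g$ is only known to be in $L^2$ (its $L^1$-membership is precisely the open problem the paper discusses), so the Fubini-based form of the Parseval identity does not apply directly there. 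The fix is routine -- either extend the identity $\int FG\,h_\K^2=\int \mathcal{F}_\K F(\xi)\mathcal{F}_\K G(-\xi)h_\K^2(\xi)\,d\xi$ to all $F,G\in L^2(h_\K^2)$ by continuity (both sides are bounded bilinear forms by Cauchy--Schwarz and Plancherel), or compute the right-hand side purely via the $L^2$ Plancherel theorem -- but as written that step is the one genuine soft spot in an otherwise sound proof.
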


	There is no explicit formula for the Dunkl translation operator which makes it difficult to use. However, when $ f $ is a radial function there is a very useful formula for $ \tau_\K(x)f $ due to R\"osler.
	
	\begin{prop} Let $f$ be a radial Schwartz class function such that $f(x)=f_0(|x|).$ Then
		\begin{equation}\label{trans-rad}
			\tau_\K(x) f(y)=\int_{\mathbb{R}^d} \, f_0(\sqrt{|x|^2+|y|^2+2\langle y, \eta \rangle}) \,d\mu_x(\eta),
		\end{equation}
		where  $\mu_x$ is a probability measure supported in the convex hull of the set $\{ \sigma (x) : \sigma \in G \}$. 
	\end{prop}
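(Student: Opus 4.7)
The plan is to reduce the claim to the Gaussian case by means of R\"osler's Laplace-type representation of the Dunkl kernel, and then pass to general radial Schwartz functions by density. Since $f$ is radial, its Dunkl transform is also radial (as follows from the $G$-equivariance of $E_\K$ and a standard spherical-harmonic decomposition); write $\Fk f(\xi)=F_0(|\xi|)$. The preceding proposition then gives
\[
\tau_\K(x)f(y)=\int_{\R^d} E_\K(ix,\xi)\,E_\K(iy,\xi)\,F_0(|\xi|)\,h_\K^2(\xi)\,d\xi.
\]

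The central tool is R\"osler's theorem on the positive integral representation of the Dunkl intertwining operator $V_\K$: for each $x\in\R^d$ there exists a unique probability measure $\mu_x$ supported in $\operatorname{conv}\{\sigma(x):\sigma\in G\}$ such that $V_\K p(x)=\int p(\eta)\,d\mu_x(\eta)$ for every polynomial $p$. Extending to entire functions via power series yields the Laplace-type representation
\[
E_\K(x,z)=\int_{\R^d} e^{\langle\eta,z\rangle}\,d\mu_x(\eta), \qquad z\in\C^d.
\]
I would first verify the claim on the Gaussian family $\phi_t(z)=e^{-t|z|^2}$, $t>0$. A direct computation, applying the preceding proposition, using that $\Fk\phi_t$ is again a Gaussian, and invoking the standard Mehler-type evaluation of the resulting integral, yields the closed form
\[
\tau_\K(x)\phi_t(y)=e^{-t(|x|^2+|y|^2)}\,E_\K(-2tx,y),
\]
which reduces for $\K=0$ to the familiar $\tau_0(x)\phi_t(y)=e^{-t|x+y|^2}$. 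Substituting the Laplace representation $E_\K(-2tx,y)=\int e^{-2t\langle\eta,y\rangle}\,d\mu_x(\eta)$ and combining exponentials produces
\[
\tau_\K(x)\phi_t(y)=\int_{\R^d} e^{-t(|x|^2+|y|^2+2\langle y,\eta\rangle)}\,d\mu_x(\eta),
\]
which is exactly the stated formula with $f_0(r)=e^{-tr^2}$.

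To extend from Gaussians to a general radial Schwartz $f$, note that finite linear combinations of the $\phi_t$, $t>0$, form a dense subspace of the radial Schwartz space (as can be seen via the Dunkl transform, which maps Gaussians to Gaussians and respects radiality). Both sides of the claimed identity depend linearly and continuously on $f$ in a compatible topology, and the probability character together with the compact support of $\mu_x$ permit free use of Fubini and dominated convergence; hence the identity propagates from Gaussians to all radial Schwartz $f$. The main obstacle is the Mehler-type evaluation of $\tau_\K(x)\phi_t$, which is in effect the core of R\"osler's original derivation and is the only step that is not a routine substitution or limit passage; once it is in hand, everything else is mechanical.
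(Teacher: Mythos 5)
The paper does not actually prove this proposition: it is recalled from R\"osler \cite{RoslerAPRF} (Theorem 5.1), with the extensions to larger classes of radial functions attributed to Thangavelu--Xu and Dai--Wang, so there is no in-paper argument to compare yours against. Your route --- establish the formula for the Gaussians $\phi_t$ via the master formula for the Dunkl kernel together with the Laplace-type representation $E_\K(x,z)=\int e^{\langle \eta,z\rangle}\,d\mu_x(\eta)$ furnished by the positivity of the intertwining operator, then extend by linearity and density --- is a legitimate proof of the Schwartz-class statement and is close in spirit to R\"osler's own derivation, whose essential external input is likewise the positivity theorem for $V_\K$. Your ``Mehler-type evaluation'' $\tau_\K(x)\phi_t(y)=e^{-t(|x|^2+|y|^2)}E_\K(-2tx,y)$ is, up to rescaling, exactly the translated heat kernel formula that the paper records in Section 2.3, so that step is genuinely standard and not a hidden gap.

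Two points need to be tightened. First, the density assertion is not justified as written: the observation that $\Fk$ maps Gaussians to Gaussians says nothing about density of $\operatorname{span}\{\phi_t : t>0\}$ in the radial Schwartz space. A correct argument: by Hahn--Banach it suffices to show that a radial tempered distribution $u$ with $\langle u,\phi_t\rangle=0$ for all $t>0$ must vanish; the map $t\mapsto\langle u,\phi_t\rangle$ is holomorphic in $\Re t>0$, so differentiating at $t=1$ gives $\langle u,|x|^{2k}e^{-|x|^2}\rangle=0$ for all $k$, and since $e^{-|x|^2}u$ has an entire Fourier transform all of whose derivatives vanish at the origin, $u=0$. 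Second, you should say exactly which continuity is used on each side: the left side is controlled by $\|\Fk f\|_{L^1(h_\K^2)}$, hence by Schwartz seminorms of $f$ (using that $\Fk$ is continuous on $\mathcal{S}(\R^d)$ and that $|E_\K(ix,\xi)|\le 1$), while the right side is controlled by $\sup|f_0|$ on the interval $\bigl[\,\bigl|\,|x|-|y|\,\bigr|,\ |x|+|y|\,\bigr]$, because $\mu_x$ is a probability measure supported in the convex hull of the $G$-orbit of $x$, so that $|x|^2+|y|^2+2\langle y,\eta\rangle\ge(|x|-|y|)^2\ge 0$ on its support. With these two points filled in, the argument is complete for radial Schwartz $f$; it does not by itself yield the Thangavelu--Xu or Dai--Wang extensions to broader classes, but those are not claimed in the statement.
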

	
	The formula \eqref{trans-rad} was initially derived by Rösler \cite[Theorem 5.1]{RoslerAPRF}. Thangavelu and Xu \cite[Proposition 3.3]{ThangaveluCOMF} noted that this formula is valid for radial functions $f$ where both $f$ and $\mathcal{F}_\K f$ belong to $L^1(\mathbb{R}^d, h_\K^2)$. Subsequently, Dai and Wang \cite[Lemma 3.4]{WangATT} expanded this result to include all continuous radial functions in $L^2(\mathbb{R}^d, h_\K^2)$.\\

	From the above proposition we can deduce several properties of $\tau_\K(x)f $ when $ f $ is radial. For example,  if $f$ is  bounded and radial, then  $\tau_\K(x)f\geq 0$ whenever $ f \geq 0.$ As shown in \cite{ThangaveluCOMF} this property need not be true for general functions. We record the following result separately as it will be used in the study of $ g^\ast$-functions.
	
	\begin{cor}\label{trans-prod}
		Let $f$ and $g$ be two continuous radial functions in $L^2(\mathbb{R}^d, h_\K^2).$  Then for any  $x, y \in \mathbb{R}^d$
		$$ |\tau_\K(x)(fg)(y)|\leq \big(\tau_\K(x)|f|^2(y)\big)^{1/2}\,\big(\tau_\K(x)|g|^2(y)\big)^{1/2}.$$
	\end{cor}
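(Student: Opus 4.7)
The corollary follows almost immediately from Rösler's formula \eqref{trans-rad} combined with the Cauchy--Schwarz inequality applied to the probability measure $\mu_x$. The plan is to write each of the three translations $\tau_\K(x)(fg)(y)$, $\tau_\K(x)|f|^2(y)$ and $\tau_\K(x)|g|^2(y)$ as integrals against the \emph{same} probability measure $\mu_x$ and then invoke Cauchy--Schwarz pointwise in $y$.

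Write $f(z)=F_0(|z|)$ and $g(z)=G_0(|z|)$. Since $f$ and $g$ are radial and continuous, so are the products $fg$, $|f|^2$ and $|g|^2$, with radial profiles $F_0G_0$, $|F_0|^2$ and $|G_0|^2$ respectively. Moreover, by Cauchy--Schwarz in $L^2(\R^d,h_\K^2)$ the product $fg$ lies in $L^1(\R^d,h_\K^2)$, while $|f|^2$ and $|g|^2$ lie in $L^1(\R^d,h_\K^2)$ by hypothesis, so the version of Rösler's formula recorded in the preceding proposition (together with the extension to continuous radial $L^2$ functions by Dai--Wang, which also yields the formula for continuous radial $L^1$ functions through the same argument) applies to all three. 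Setting
\[
r(\eta)=\sqrt{|x|^2+|y|^2+2\langle y,\eta\rangle},
\]
we obtain the three representations
\[
\tau_\K(x)(fg)(y)=\int_{\R^d} F_0(r(\eta))\,G_0(r(\eta))\,d\mu_x(\eta),
\]
\[
\tau_\K(x)|f|^2(y)=\int_{\R^d}|F_0(r(\eta))|^2\,d\mu_x(\eta),\qquad \tau_\K(x)|g|^2(y)=\int_{\R^d}|G_0(r(\eta))|^2\,d\mu_x(\eta).
\]

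Since $\mu_x$ is a probability measure, the Cauchy--Schwarz inequality applied to the first identity gives
\[
|\tau_\K(x)(fg)(y)|\le \Bigl(\int_{\R^d}|F_0(r(\eta))|^2\,d\mu_x(\eta)\Bigr)^{1/2}\Bigl(\int_{\R^d}|G_0(r(\eta))|^2\,d\mu_x(\eta)\Bigr)^{1/2},
\]
which is precisely $\bigl(\tau_\K(x)|f|^2(y)\bigr)^{1/2}\bigl(\tau_\K(x)|g|^2(y)\bigr)^{1/2}$.

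The only real point of care is justifying the pointwise Rösler representation for the three functions $fg$, $|f|^2$, $|g|^2$ under the hypothesis ``continuous radial in $L^2$''. This is the one potential obstacle; it is handled either by a direct density argument (approximating $F_0,G_0$ by radial Schwartz functions in $L^2$ and passing to the limit, using that $\mu_x$ has compact support and the integrands are continuous) or by quoting the extension due to Dai--Wang already cited after Proposition \ref{properties of Dunkl trans}. Once the three integral representations are in hand, the conclusion is a one-line application of Cauchy--Schwarz against the probability measure $\mu_x$.
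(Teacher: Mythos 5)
Your proof is correct and is precisely the argument the paper intends: the corollary is placed immediately after R\"osler's formula \eqref{trans-rad} so that $\tau_\K(x)(fg)(y)$, $\tau_\K(x)|f|^2(y)$ and $\tau_\K(x)|g|^2(y)$ can all be written as integrals against the same probability measure $\mu_x$ and the Cauchy--Schwarz inequality applied. The paper leaves this deduction implicit, and your additional care in justifying the pointwise representation for the continuous radial $L^1$ functions $|f|^2$ and $|g|^2$ is a sensible (and correct) refinement rather than a departure from the paper's route.
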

	\vskip0.20in
	
	The boundedness of the Dunkl translation operators have been studied by several authors. While it has been proved that  $\tau_\K(x)$ is a bounded operator for radial functions in $L^p(\mathbb{R}^d, h_\K^2)$, $1\leq p\leq \infty$ (see \cite{ThangaveluCOMF}, \cite{GorbachevPLBDTGTOAIA}), it remains an open question whether the Dunkl translation is a bounded operator on the entire space $L^p(\mathbb{R}^d, h_\K^2)$ for $p \neq 2$.\\
	
	The Dunkl convolution of two functions $ f, g \in L^1(\R^d,h_\K^2) $ is defined by 
	$$ f \ast_\K g(x) = \int_{\R^d} \, f(y) \tau_\K(-y)g(x)\, h_\K^2(y)\, dy.$$ 
	It is then easily verified that  the Dunkl transform  converts the convolution into products: $ \mathcal{F}_\K (f \ast_\K g) = \mathcal{F}_\K f\, \mathcal{F}_\K g.$  Therefore, we have  an alternate definition for the convolution. 
	$$ f \ast_\K g(x) =  c_\K\, \int_{\R^d} \, E_\K(ix,\xi)\, \mathcal{F}_\K f(\xi)\, \mathcal{F}_\K g(\xi) h_\K^2(\xi)\, d\xi.$$ 
	From this definition it follows immediately that 
	$ \| f \ast_\K g\|_2 \leq \|g\|_1\, \|f\|_2 .$  Inequalities of the form $ \| f \ast_\K g\|_p \leq \|g\|_1\, \|f\|_p $ for other values of $ p $ are not known in general  due to the lack of knowledge about the boundedness of $ \tau_\K(x).$ However, when $ g $ is radial we do have $ \| f \ast_\K g\|_p \leq \|g\|_1\, \|f\|_p $  as proved in \cite{GorbachevPLBDTGTOAIA, ThangaveluCOMF}.\\
	
	\subsection{Dunkl Laplacian and the heat semigroup} The operator $ \Dk = \sum_{j=1}^d \mathcal{D}_j^2 $ which is the counterpart of the standard Laplacian for the Dunkl setting is known as the Dunkl Laplacian. It is a self-adjoint  non-positive operator on $ L^2(\R^d, h_\K^2) $ and generates a diffusion semigroup $ \{T_t \}_{t\geq 0}= \{e^{-t (-\Dk)}\}_{t\geq 0} $  which is given a by a positive kernel $ q_t .$  Thus $ T_t f = f \ast_\K q_t $ and the kernel is explicitly given by
	\begin{equation}
		q_t(x)= c_\K^{-1}\,  (2t)^{-d_\K/2} e^{-|x|^2/4t}.
	\end{equation}
	It  can be easily verified that $\mathcal{F}_\K q_t(\xi)=c_\K^{-1} e^{-t|\xi|^2}$ and the translated kernel is given by
	$$\tau_\K(-y)q_t(x)= c_\K^{-1}\,  (2t)^{-d_\K/2} e^{-(|x|^2+|y|^2)/4t} E_\K\Big(\frac{x}{\sqrt{2t}}, \frac{y}{\sqrt{2t}}\Big).$$ 
	We denote this kernel by $ h_t(x,y) $ and the semigroup $ \{T_t \}_{t\geq 0} $  can also be defined by
	$$ T_tf(x) = \int_{\R^d} h_t(x,y)\, f(y)\, h_\K^2(y)\, dy.$$
	We record the properties of this semigroup in the following theorem. As the function $ q_t $ is radial  we can use the boundedness of the convolution with $ q_t $ in proving the following result.
	\begin{thm} (See \cite{Roslerold})
		For any $ f \in L^p(\R^d,h_\K^2), 1 \leq p < \infty,$ the function $ u(x,t) = T_tf(x) = f \ast q_t(x) $ solves the heat equation
		$\partial_t u(x, t)=\Dk u(x, t),\,\, u(x, 0)=f(x).$ Moreover, for any $1\leq p \leq \infty$ we have $ \| f \ast_\K q_t\|_p \leq \|f\|_p.$ Furthermore, $ T_tf \geq 0$ whenever $ f \geq 0,$ and $ T_t 1=1.$
		
	\end{thm}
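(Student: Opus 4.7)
The plan is to dispatch the four conclusions in sequence, exploiting the radiality and explicit formula for $q_t$ together with the translation and convolution properties recalled in the preceding subsections. For the heat equation, I would pass to the Dunkl-transform side. Since $\mathcal{F}_\K q_t(\xi)$ is a constant multiple of $e^{-t|\xi|^2}$ and $\mathcal{F}_\K(f \ast_\K q_t) = \mathcal{F}_\K f \cdot \mathcal{F}_\K q_t$, differentiating in $t$ brings down the factor $-|\xi|^2$, which is exactly how $\Dk$ acts on $E_\K(ix,\xi)$ in the $x$ variable; thus $\partial_t u$ and $\Dk u$ have the same Dunkl transform. For Schwartz $f$ this yields the PDE pointwise after interchanging derivatives with the inversion integral, the interchange being justified by the rapid decay of $|\xi|^2 e^{-t|\xi|^2}\mathcal{F}_\K f(\xi)$. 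For general $f \in L^p(\R^d,h_\K^2)$ I would instead work with the kernel representation $T_t f(x) = \int h_t(x,y) f(y)\, h_\K^2(y)\, dy$: the explicit expression for $h_t$ given in the text is $C^\infty$ in $(x,t)$ with Gaussian-in-$y$ decay of all derivatives on compact time intervals, so differentiating in $t$ and applying $\Dk$ in $x$ under the integral is legitimate, and the identity $(\partial_t - \Dk) h_t(x,y) = 0$ reduces to the Fourier-side computation just carried out. The initial condition $u(x,0) = f(x)$ is then obtained in $L^p$ norm via the fact that $q_t$ is an approximate identity: $\int q_t h_\K^2 = 1$ follows from the change of variables $x = \sqrt{2t}\,y$ and the homogeneity $h_\K^2(\sqrt{2t}\,y) = (2t)^{\gamma_\K} h_\K^2(y)$, which cancels the prefactor $(2t)^{-d_\K/2}$ and collapses the remaining integral to the defining identity for $c_\K$.

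The $L^p$ contraction $\|T_t f\|_p \leq \|f\|_p$ for all $1 \leq p \leq \infty$ is then immediate from the radial convolution inequality $\|f \ast_\K g\|_p \leq \|g\|_1\|f\|_p$ recalled earlier in the Preliminaries (valid precisely because $g = q_t$ is radial), combined with $\|q_t\|_1 = 1$. Positivity is equally short: writing $T_t f(x) = \int \tau_\K(-y) q_t(x)\, f(y)\, h_\K^2(y)\, dy$, it suffices to check $\tau_\K(-y) q_t(x) \geq 0$. R\"osler's formula expresses $\tau_\K(x) q_t(y)$ as the integral of the nonnegative quantity $q_t\bigl(\sqrt{|x|^2+|y|^2+2\langle y,\eta\rangle}\bigr)$ against the probability measure $\mu_x$, so $\tau_\K(x) q_t \geq 0$; the symmetry $\tau_\K(-y)q_t(x) = \tau_\K(x) q_t(-y)$ combined with the radiality $q_t(-y) = q_t(y)$ yields the required nonnegativity. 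Finally, $T_t 1 = 1$ amounts to $\int \tau_\K(-y) q_t(x)\, h_\K^2(y)\, dy = 1$; by property (iii) of the Dunkl translation (or equivalently by evaluating $\mathcal{F}_\K(\tau_\K(-y) q_t)$ at $\xi = 0$ and invoking $E_\K(iy,0)=1$), this reduces to $\int q_t(y)\, h_\K^2(y)\, dy$, which equals $1$ as computed above.

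The main obstacle, though modest, is making the differentiation-under-the-integral step in the heat equation fully rigorous for $f$ in $L^p$ rather than Schwartz. The cleanest route is to first establish $\partial_t u = \Dk u$ pointwise using the kernel $h_t(x,y)$, whose joint smoothness in $(x,t)$ and Gaussian-tail control in $y$ (uniform for $t$ in compact subsets of $(0,\infty)$) allow one to apply dominated convergence to justify both the time derivative and the Dunkl differential-difference operator past the integral, and then to verify the initial condition $u(\cdot,t) \to f$ in $L^p$ as $t \to 0^+$ by the standard approximate-identity argument based on the $L^p$ contraction and density of Schwartz functions in $L^p(\R^d, h_\K^2)$.
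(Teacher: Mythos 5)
The paper records this theorem without a formal proof, offering only the remark that the radiality of $q_t$ makes the convolution inequality $\|f\ast_\K g\|_p\leq\|g\|_1\|f\|_p$ available; your argument is correct and fills in exactly the standard details along the same lines (radial convolution bound plus $\|q_t\|_1=1$ for the contraction, R\"osler's formula for positivity of $\tau_\K(x)q_t$, and the Fourier-side eigenvalue computation for the heat equation). The only caveat is a normalization mismatch already present in the paper's stated formulas for $q_t$ and $c_\K$ (as written, $\int q_t\,h_\K^2\,dx=c_\K^{-2}$ rather than $1$), which affects neither your reasoning nor the substance of the result.
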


	In order to study the $g_s^\ast$-functions for non-integral values of  $ s $ we need to make use of the semigroup generated by fractional powers of $ (-\Dk).$ For any $ \delta > 0 $ we define
	\begin{equation}
		T_{t,\delta} f(x) = c_\K\, \int_{\R^d} \, \mathcal{F}_\K f(\xi)\, e^{-t|\xi|^{2\delta}}\, E_\K(ix,\xi)\, h_\K^2(\xi)\, d\xi
	\end{equation}
	for $ f \in L^2(\R^d, h_\K^2).$
	For $ 0 < \delta < 1$ we have  Bochner's subordination  formula which allows us  express $ T_{t,\delta} $ in terms of $ T_t$. Thus we have
	\begin{equation}\label{boch-sub} T_{t,\delta} f(x) = \int_0^\infty  T_sf(x)\, \eta_t(s) ds 
	\end{equation}
	where $ \eta_t(s) $ is a probability density function. In view of this, it follows that all the properties of  $T_t $ are shared by $ T_{t,\delta}.$ Hence $ \{T_{t,\delta}\}_{t\geq 0} $ is also a symmetric diffusion semigroup.\\
	
	\section{Leibniz formula for the Dunkl derivatives}\label{subsection-Leib formula for Dunkl deriv} In this section we establish a Leibniz formula for higher order Dunkl derivatives which plays a crucial role in proving one of our main results. For two reasonable functions $f$ and $g$, if  one of them is invariant under the action of $G$, then we have the following Leibniz-type rule is known: 
	\begin{equation}\label{leib-1} \mathcal{D}_j(fg)(x)= \mathcal{D}_jf(x)\, g(x)+f(x) \mathcal{D}_j g(x).
	\end{equation}
	Even if $ f $ is $G$-invariant, its Dunkl derivative $ \mathcal{D}_j f $ need not be $G$-invariant for which it is difficult to come up with a Leibniz rule for higher order derivatives. In order to prove a classical version of multiplier theorem for the Dunkl transform which involves conditions on the derivatives of the multiplier $ m $ we need a usable formula for higher order derivatives of $ m(\xi) |\xi|^{2k} e^{-t|\xi|^2}.$ More precisely we would like to know if the estimates $ |\mathcal{D}^\alpha m(\xi)| \leq C\, |\xi|^{-|\alpha|} $ lead to same estimates  for derivatives of the function 
	$m(\xi) |\xi|^{2k} e^{-t|\xi|^2}.$ To this end we obtain a Leibniz formula for functions of the form $ mg $ where $ g $ is radial.\\
	
	Leibniz formula for $ m g $ where $ g $ is radial involves repeated application of certain averaging operators $ A_\nu $ which we define now.  For any function $\nu: R\,\cup\, \{0\}\to \mathbb{C}$, the operator $A_\nu$ is  given by 
	$$ A_\nu\, m(x)=\sum\limits_{\lambda \in R\,\cup\, \{0\}}\nu(\lambda)\, m(\sigma_\lambda x)$$
	where $\sigma_0=I$. In other words
	$ A_\nu\, m=\sum\limits_{\lambda \in R\,\cup\, \{0\}}\nu(\lambda)\, \sigma_\lambda m.$ When the function $\nu$ depends on a multi-index $\alpha$, we will sometimes denote the corresponding operator by $A_{\nu_{\alpha}}$. For example, when $\nu_\alpha(\lambda) =  \nu(\lambda)\,\prod_{j=1}^d(c_j\lambda_j)^{\alpha_j}$, where $\nu: R\,\cup\, \{0\}\to \mathbb{C}$ is an arbitrary function, we write $A_{\nu_\alpha}$ to denote the operator given by $A_{\nu_\alpha}\, m(x)=\sum_{\lambda \in R\,\cup\, \{0\}}\nu(\lambda)\, \prod_{j=1}^d(c_j\lambda_j)^{\alpha_j} m(\sigma_\lambda x)$. Below we record  some easy to prove properties of $A_\nu$ that will be utilized later.\\
	
	\begin{lem}\label{lem-1} (i) If $m \in C^N(\mathbb{R}^d \setminus \{0\})$ satisfies $|\mathcal{D}^\alpha m(x)| \lesssim |x|^{-|\alpha|}$ for all multi-indices $|\alpha| \leq N$, then $A_\nu\, m$ also satisfies the same condition. 
		
		\noindent
		(ii) For any $ \alpha \in \mathbb N^d,$  there are functions $\nu_\beta $ such that
		$$A_\nu\, (x^\alpha\, m)=\sum\limits_{|\beta|=|\alpha|} x^\beta A_{\nu_{\beta}} m.$$
	\end{lem}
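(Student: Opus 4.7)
The plan is to prove both parts by direct computation, building on the equivariance $\sigma \circ \mathcal{D}_\xi \circ \sigma^{-1} = \mathcal{D}_{\sigma(\xi)}$ already recorded in the paper. For part (i), since $A_\nu m$ is a finite linear combination of functions of the form $m \circ \sigma_\lambda$, it suffices to show that each $m \circ \sigma_\lambda$ satisfies the same Dunkl decay estimates. Using $\sigma_\lambda^{-1} = \sigma_\lambda$, the equivariance becomes
$$\mathcal{D}_\xi(m \circ \sigma_\lambda)(x) = (\mathcal{D}_{\sigma_\lambda \xi}\, m)(\sigma_\lambda x),$$
and iterating this identity, together with the linearity $\mathcal{D}_{\sigma_\lambda e_j} = \sum_k (\sigma_\lambda)_{kj} \mathcal{D}_k$ (immediate from the defining formula for $T_\xi$) and the commutativity of the Dunkl derivatives, yields an expansion
$$\mathcal{D}^\alpha(m \circ \sigma_\lambda)(x) = \sum_{|\beta|=|\alpha|} c_{\alpha,\beta}(\lambda)\, (\mathcal{D}^\beta m)(\sigma_\lambda x)$$
for some constants $c_{\alpha,\beta}(\lambda)$. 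Since $\sigma_\lambda$ is an isometry we have $|\sigma_\lambda x| = |x|$, and the hypothesis on $m$ then bounds each summand by a constant multiple of $|x|^{-|\alpha|}$. Summing over $\lambda \in R \cup \{0\}$ completes (i).

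For part (ii), I would expand the monomial $(\sigma_\lambda x)^\alpha$ as a polynomial in $x$. Each coordinate $(\sigma_\lambda x)_j = \sum_k (\sigma_\lambda)_{jk} x_k$ is linear in $x$, so $(\sigma_\lambda x)^\alpha = \prod_j (\sigma_\lambda x)_j^{\alpha_j}$ is a homogeneous polynomial of degree $|\alpha|$, say
$$(\sigma_\lambda x)^\alpha = \sum_{|\beta|=|\alpha|} P_{\alpha,\beta}(\lambda)\, x^\beta.$$
Substituting this into $A_\nu(x^\alpha m)(x) = \sum_\lambda \nu(\lambda)\, (\sigma_\lambda x)^\alpha\, m(\sigma_\lambda x)$ and interchanging the two finite sums gives the claimed identity with $\nu_\beta(\lambda) := \nu(\lambda)\, P_{\alpha,\beta}(\lambda)$.

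I do not expect any serious obstacle here. The only care needed is in the iteration step of (i): one must legitimately rewrite the composition $\prod_j \mathcal{D}_{\sigma_\lambda e_j}^{\alpha_j}$ as a homogeneous polynomial of degree exactly $|\alpha|$ in the commuting operators $\mathcal{D}_1, \ldots, \mathcal{D}_d$, which is where both the linearity of $\xi \mapsto \mathcal{D}_\xi$ and the commutativity $\mathcal{D}_\xi \mathcal{D}_\eta = \mathcal{D}_\eta \mathcal{D}_\xi$ are used essentially. Beyond this bookkeeping, everything reduces to algebraic manipulation with the already established equivariance property.
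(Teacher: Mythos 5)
Your proposal is correct and follows essentially the same route as the paper: part (i) rests on the equivariance $\mathcal{D}_\xi(m\circ\sigma_\lambda)=(\mathcal{D}_{\sigma_\lambda\xi}m)\circ\sigma_\lambda$ iterated via linearity and commutativity of the Dunkl derivatives (the paper organizes the same computation as the operator identity $\mathcal{D}_j\circ A_\nu=\sum_l A_{\nu_{e_l}}\circ\mathcal{D}_l$ plus induction), and part (ii) is the identical expansion of $(\sigma_\lambda x)^\alpha$ as a homogeneous polynomial of degree $|\alpha|$. Your explicit remark that $|\sigma_\lambda x|=|x|$ is the one point the paper leaves implicit; otherwise the two arguments coincide.
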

	\begin{proof} We make use of the property  $ \sigma \circ \mathcal{D}_\xi \circ \sigma^{-1} = \mathcal{D}_{\sigma(\xi)} $  valid for any $ \sigma \in G.$  In particular, for any $ \sigma_\lambda $ we have (as $\sigma_\lambda =  \sigma_\lambda^{-1}$)
		\begin{equation}\label{eq-1}(\mathcal{D}_j\circ \sigma_\lambda)m = (\sigma_\lambda \circ \mathcal{D}_{\sigma_\lambda(e_j)})m = \sum_{\ell=1}^d e_\ell \cdot \sigma_\lambda(e_j)\, (\sigma_\lambda \circ \mathcal{D}_\ell )m.
		\end{equation}
		From this it is clear that if $ \mathcal{D}_jm $ has certain decay, the same thing is true for $ (\mathcal{D}_j\circ \sigma_\lambda)m.$
		As  the above is true for any $ 1 \leq j \leq d, $  we have the formula
		$$ (\mathcal{D}_j  \circ A_\nu)m =  \sum\limits_{\lambda\in R\,\cup\, \{0\}}\nu(\lambda) \, ( \mathcal{D}_j \circ \sigma_\lambda)m = \sum_{\ell =1}^d  \sum\limits_{\lambda\in R\,\cup\, \{0\}} \nu (\lambda)\,( e_\ell \cdot \sigma_\lambda(e_j))\, (\sigma_\lambda \circ \mathcal{D}_\ell )m .$$
		In other words, $  \mathcal{D}_j \circ A_\nu = \sum_{\ell=1}^d A_{\nu_{j,\ell}} \circ \mathcal{D}_\ell $ where $ \nu_{j,\ell}(\lambda) = \nu(\lambda) \, (e_\ell \cdot \sigma_\lambda(e_j)) $ and
		hence  our claim is true for $ |\alpha| =1.$ For higher order derivatives  we just need to use straightforward induction.
		The second property stated in the lemma is easy to verify. From the definition
		$$ A_\nu\, (x^\alpha\, m)(x) = \sum\limits_{\lambda \in R\,\cup\, \{0\}}\nu(\lambda)\,\prod\limits_{j=1}^d \big(x_j-\langle x, \lambda \rangle\lambda_j\big)^{\alpha_j} m(\sigma_\lambda x) $$
		which after simplification reduces to $ \sum\limits_{|\beta|=|\alpha|} x^\beta A_{\nu_{\beta}} \,m (x).$
	\end{proof}
	
	The averaging operators $ A_\nu $ occur when we try to compute the commutator of $ \mathcal{D}_j $ with the operator of multiplication by the monomial $ x^\alpha.$  Consider first
	$$ \mathcal{D}_j\,(x_{\ell}\,m)(x) = \delta_{j, \,\ell}\,m +x_{\ell}\,\partial_jm + x_{\ell}\sum\limits_{\lambda \in R}\frac{\K(\lambda)}{2}\lambda_j \frac{m(x)-m(\sigma_\lambda x)}{\langle \lambda , x \rangle} +\sum\limits_{\lambda \in R}\frac{\K(\lambda)}{2}\lambda_j\, \lambda_{\ell}\, m(\sigma_\lambda x) .$$
	Thus we see that 
	\begin{equation}\label{commut} \mathcal{D}_j (x_{\ell}\, m) - x_{\ell}\, \mathcal{D}_j m = \delta_{j, \,{\ell}}\,m +\sum\limits_{\lambda \in R}\frac{\K(\lambda)}{2}\lambda_j\, \lambda_{\ell}\, m(\sigma_\lambda x) = A_{\nu}\, m,
	\end{equation}
	where $\nu: R\,\cup\, \{0\}\to \mathbb{C}$ is given by $\nu (\lambda) =
		\left\{
		\begin{array}{ll}
			\delta_{j, \,{\ell}}  & \mbox{if } \lambda = 0, \\
			\frac{\K(\lambda)\lambda_j\, \lambda_{\ell}}{2} & \mbox{if } \lambda \in R.
		\end{array}
		\right.$\\
	\\
	
	Using induction we can obtain a formula for the commutator of $ \mathcal{D}_j $ with the operator of multiplication by the monomial $ x^\alpha.$
	
	\begin{lem}\label{lem-2} For any $ C^1 $ function $m$ and  multi-index $\alpha$  we have the commutator relation
		$$			\mathcal{D}_j(x^\alpha \,m) - x^\alpha \,\mathcal{D}_jm = \sum\limits_{|\beta|=|\alpha|-1}  x^\beta \,A_{\nu_j(\beta)}\,m .$$
		for some functions $\nu_j(\beta).$
	\end{lem}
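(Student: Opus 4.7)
The natural approach is induction on $|\alpha|$, using equation \eqref{commut} as the base case and Lemma \ref{lem-1}(ii) to handle the averaging operator that appears when the inductive step introduces an extra factor of $x^{\alpha'}$.

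The base case $|\alpha|=1$ is exactly \eqref{commut}, which gives $\mathcal{D}_j(x_\ell m) - x_\ell \mathcal{D}_j m = A_\nu m$ for an explicit $\nu$ depending on $j,\ell$; this is the required expression with $\beta = 0$ (the empty multi-index). For the inductive step, assume the claim holds for all multi-indices of size at most $n$, and let $\alpha$ satisfy $|\alpha| = n+1$. Pick any coordinate $\ell$ with $\alpha_\ell \geq 1$, write $\alpha = \alpha' + e_\ell$ so that $x^\alpha = x_\ell x^{\alpha'}$, and apply \eqref{commut} to the function $x^{\alpha'} m$ in place of $m$:
\begin{equation*}
\mathcal{D}_j(x^\alpha m) \;=\; \mathcal{D}_j\bigl(x_\ell\, (x^{\alpha'} m)\bigr) \;=\; x_\ell\, \mathcal{D}_j(x^{\alpha'} m) \;+\; A_{\nu}(x^{\alpha'} m),
\end{equation*}
where $\nu = \nu_{j,\ell}$ is the coefficient function provided by \eqref{commut}.

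Next I invoke the inductive hypothesis on the first term, giving $\mathcal{D}_j(x^{\alpha'} m) = x^{\alpha'} \mathcal{D}_j m + \sum_{|\beta|=|\alpha'|-1} x^\beta A_{\nu_j(\beta)}\, m$; multiplying through by $x_\ell$ produces $x^\alpha \mathcal{D}_j m$ (which cancels against the left-hand side) plus a sum of terms $x^{\beta + e_\ell} A_{\nu_j(\beta)} m$ with $|\beta + e_\ell| = |\alpha'| = |\alpha|-1$. For the second term, Lemma \ref{lem-1}(ii) converts $A_\nu(x^{\alpha'} m)$ into $\sum_{|\gamma|=|\alpha'|} x^\gamma A_{\nu_\gamma} m$, again indexed by $|\gamma| = |\alpha|-1$. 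Collecting the two sums and renaming the indexing monomials yields precisely the asserted form $\sum_{|\beta|=|\alpha|-1} x^\beta A_{\nu_j(\beta)} m$, completing the induction.

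There is no real analytic obstacle here; the only point that requires a bit of care is the bookkeeping of the averaging functions, since the $\nu_j(\beta)$ produced in the inductive step are not the same as those from the hypothesis but are built out of them. Since the statement only asserts the existence of \emph{some} such coefficient functions $\nu_j(\beta)$, this combinatorial reindexing is harmless. The use of Lemma \ref{lem-1}(ii) is essential: without it, one could not recombine the two pieces (from $x_\ell \mathcal{D}_j(x^{\alpha'} m)$ and from $A_\nu(x^{\alpha'} m)$) into a single sum indexed by monomials of degree $|\alpha|-1$.
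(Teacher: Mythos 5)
Your proof is correct and follows essentially the same route as the paper: induction on $|\alpha|$ with base case \eqref{commut}, peeling off one coordinate factor, applying \eqref{commut} to $x^{\alpha'}m$, invoking the inductive hypothesis on the remaining term, and using Lemma \ref{lem-1}(ii) to reduce $A_\nu(x^{\alpha'}m)$ to a sum of monomials of degree $|\alpha|-1$ times averaged copies of $m$. The only difference is cosmetic (you split $\alpha=\alpha'+e_\ell$ while the paper steps from $\alpha$ to $\alpha+e_k$), so there is nothing further to add.
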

	\begin{proof} We have already seen that the result is true when $ |\alpha|=1.$ Assuming the result for some $ \alpha $ let us consider
		$$ \mathcal{D}_j(x^{\alpha +e_k}\,m)= \mathcal{D}_j(x_kx^\alpha \,m)= x_k\, \mathcal{D}_j(x^\alpha \,m) + A_\nu(x^\alpha \,m)$$
		where in the second equality we have used \eqref{commut}. By the induction hypothesis and part (ii) of the previous lemma we have
		$$ \mathcal{D}_j(x^{\alpha +e_k}\,m) = x_k \, \Big(x^\alpha \,\mathcal{D}_jm +\sum\limits_{|\beta|=|\alpha|-1}  x^\beta \,A_{\nu_j(\beta)}\,m \Big)+ \Big(\sum\limits_{|\beta|=|\alpha|} x^\beta A_{\nu_{\beta}} \,m \Big).$$
		Renaming $ \beta+e_k $ in the first summand as $ \beta,$  we complete the induction. The same computation works for any $ k.$
	\end{proof}
	
	As we are interested a Leibniz formula for $ \mathcal{D}^\alpha(m\, g) $ where $ g $ is radial, we start with the simplest case. By Leibniz formula for $ \mathcal{D}_j $ we have
	$$ \mathcal{D}_j(m\, g) = g \, \mathcal{D}_j m\,  + m\, \partial_j g = g \, \mathcal{D}_j m\,  + x_j\, m\,  D_r h$$
	where $ g(x) = h(|x|) $ and $ D_r = r^{-1}  \frac{d}{dr}.$ When we compute the second order derivatives $ \mathcal{D}_k \mathcal{D}_j (m\,g) $ we encounter terms of the form 
	$\mathcal{D}_k(x_j\, m) $ which can be handled using the first order commutation relations proved in \eqref{commut}. For derivatives of higher order more complicated terms occur. For example, for $ \mathcal{D}_{\ell} \mathcal{D}_k \mathcal{D}_j (m\,g) $ we need to handle terms of the form  $\mathcal{D}_\ell \, ( x_k\, \mathcal{D}_j m).$ When we use commutation relations to move the factor $ x_k $ to the left of $ \mathcal{D}_\ell$ terms of the form $ A_\nu( \mathcal{D}_j m) $ occur. To handle these kind of terms we use the following result.\\
	
	\begin{lem}\label{lem-3} For any $ \alpha \in \mathbb N^d $ there are constants $ c_{\alpha,\beta}$ and functions $\nu_{\alpha,\beta} $ so that we have 
		$$ A_\nu \circ \mathcal{D}^\alpha  = \sum_{|\beta|=|\alpha|} \, c_{\alpha,\beta} \, \mathcal{D}^\beta \circ A_{\nu_{\alpha,\beta}}.$$
	\end{lem}	
	\begin{proof} We start with the simplest case  $ \alpha = e_j.$  Consider 
		$$ (A_\nu \circ \mathcal{D}_j) m = \sum\limits_{\lambda\in R\,\cup\, \{0\}}\nu(\lambda) \, (\sigma_\lambda \circ \mathcal{D}_j)m  .$$
		Since $ \sigma_\lambda \circ \mathcal{D}_j = \mathcal{D}_{\sigma_\lambda(e_j)}\circ \sigma_\lambda, $ recalling that  $\mathcal{D}_{\sigma_\lambda(e_j)}=\sum_{\ell=1}^d (e_\ell \cdot \sigma_\lambda(e_j))\mathcal{D}_\ell$, we can rewrite the above as 
		$$(A_\nu \circ \mathcal{D}_j) m =  \sum_{\ell =1}^d \sum\limits_{\lambda\in R\,\cup\, \{0\}}\nu(\lambda) \, (e_\ell \cdot \sigma_\lambda(e_j)) ({\mathcal{D}_\ell} \circ \sigma_\lambda)m .$$
		By defining $ \nu_{j,\ell}(\lambda) = \nu(\lambda)\, (e_\ell \cdot \sigma_\lambda(e_j)) $ we get the following formula:
		\begin{equation}\label{eq-2} (A_\nu \circ \mathcal{D}_j) m =  \sum_{\ell =1}^d  \sum\limits_{\lambda\in R\,\cup\, \{0\}}\,  \,\nu_{j,\ell}(\lambda) ({\mathcal{D}_\ell} \circ \sigma_\lambda)m =  \sum_{\ell =1}^d  
		( \mathcal{D}_\ell \circ A_{\nu_{j,\ell}})m.
		\end{equation}
		This proves the lemma when $ \alpha = e_j.$ By replacing $ m $ by $ \mathcal{D}_k m$  we see that
		$$ (A_\nu \circ \mathcal{D}_j \circ \mathcal{D}_k)m = \sum_{\ell =1}^d  ( \mathcal{D}_\ell \circ A_{\nu_{j,\ell}} \circ \mathcal{D}_k)m.$$ 
		Using \eqref{eq-2}  for the operator $ A_{\nu_{j,\ell}} \circ \mathcal{D}_k$ we obtain
		$$  ( \mathcal{D}_\ell \circ A_{\nu_{j,\ell}} \circ \mathcal{D}_k)m = \sum_{r =1}^d ( \mathcal{D}_\ell \circ  \mathcal{D}_r \circ A_{\nu_{j,\ell,k,r}} )m.$$
		It is now clear that lemma follows by induction on $ |\alpha|.$
		
	\end{proof}

	Finally, we are in a position to state and prove a Leibniz formula for $ \mathcal{D}^\alpha (m\,g)$ were $ g(x) = h(|x|) $ is radial. Recall that $ D_r = r^{-1} \frac{d}{dr}.$

	\begin{prop}\label{Leibniz}
		Let $m$ and $g$ be two reasonable functions with $g$ being radial. Then for any multi-index $\alpha$, we have the following Leibniz type rule
		\begin{equation}\label{leib}
			\mathcal{D}^{\alpha}(m\,g)=\sum\limits_{\ell =1}^N \sum\limits_{2k+|\gamma|-|\beta|=|\alpha|} c_{\alpha, \beta,\, \gamma,\, k}\,x^{\beta}\, (\mathcal{D}^{\gamma} \,m_\ell)\, (D_r^k h),
		\end{equation}
		for some  $ N \in \mathbb N $ where $ m_\ell = (A_{\nu_1} \circ A_{\nu_2}\circ...\circ A_{\nu_{n_\ell}}) m$ for some functions $ \nu_j.$ Here $ N $ depends on $ \alpha$ and $ n_\ell $ depends on the triple $ (\beta, \gamma, k).$
	\end{prop}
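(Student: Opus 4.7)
The plan is to prove the identity by induction on $|\alpha|$. For the base case $|\alpha|=1$ with $\alpha=e_j$, the classical Leibnitz formula \eqref{leib-1} applies because $g$ is $G$-invariant, giving
$$\mathcal{D}_j(mg) = g\, \mathcal{D}_j m + m\, \partial_j g = \mathcal{D}_j m \cdot h(|x|) + x_j\, m \cdot D_r h(|x|),$$
where we used $\partial_j(h(|x|)) = x_j\, D_r h(|x|)$. This matches \eqref{leib} with $N=1$, $m_1=m$, and the two index tuples $(\beta,\gamma,k)=(0,e_j,0)$ and $(e_j,0,1)$, both satisfying $2k+|\gamma|-|\beta|=1$.

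For the inductive step, assume \eqref{leib} holds for $\alpha$ and apply $\mathcal{D}_j$ to a typical summand $c_{\alpha,\beta,\gamma,k}\, x^\beta\, \mathcal{D}^\gamma m_\ell\, D_r^k h$. Since $D_r^k h(|x|)$ is radial and therefore $G$-invariant, \eqref{leib-1} gives
$$\mathcal{D}_j\bigl(x^\beta\, \mathcal{D}^\gamma m_\ell \cdot D_r^k h\bigr) = \mathcal{D}_j\bigl(x^\beta\, \mathcal{D}^\gamma m_\ell\bigr)\cdot D_r^k h \,+\, x^\beta\, \mathcal{D}^\gamma m_\ell \cdot \mathcal{D}_j\bigl(D_r^k h\bigr).$$
For the second piece, the $G$-invariance of $D_r^k h$ again collapses $\mathcal{D}_j$ to $\partial_j$, yielding $\mathcal{D}_j(D_r^k h(|x|)) = x_j\, D_r^{k+1} h(|x|)$; this produces a term $x^{\beta+e_j}\mathcal{D}^\gamma m_\ell\, D_r^{k+1} h$ whose index is $2(k+1)+|\gamma|-(|\beta|+1) = |\alpha|+1$, as required.

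For the first piece, Lemma~\ref{lem-2} yields
$$\mathcal{D}_j\bigl(x^\beta\, \mathcal{D}^\gamma m_\ell\bigr) = x^\beta\, \mathcal{D}^{\gamma+e_j} m_\ell + \sum_{|\beta'|=|\beta|-1} x^{\beta'}\, A_{\nu_j(\beta')}\, \mathcal{D}^\gamma m_\ell.$$
The first summand $x^\beta \mathcal{D}^{\gamma+e_j} m_\ell \cdot D_r^k h$ has index $2k+(|\gamma|+1)-|\beta|=|\alpha|+1$, so it fits. For the remaining terms, writing $m_\ell = A_{\nu_1}\circ\cdots\circ A_{\nu_{n_\ell}} m$ and iterating Lemma~\ref{lem-3} lets us commute $A_{\nu_j(\beta')}$ past $\mathcal{D}^\gamma$, at the cost of inserting additional averaging operators on the right: $A_{\nu_j(\beta')}\, \mathcal{D}^\gamma m_\ell = \sum_p \mathcal{D}^\gamma m'_{\ell,p}$, where each $m'_{\ell,p}$ is again a composition of averaging operators applied to $m$. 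The resulting terms $x^{\beta'}\mathcal{D}^\gamma m'_{\ell,p}\cdot D_r^k h$ have index $2k+|\gamma|-(|\beta|-1)=|\alpha|+1$, again matching the required count. Collecting all contributions, relabelling constants, and enlarging the list $m_1,\dots,m_{N'}$ to include the new $m'_{\ell,p}$, the induction closes.

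The main obstacle, which the preparatory lemmas are designed to overcome, is the non-$G$-invariance of $\mathcal{D}^\gamma m_\ell$: without Lemma~\ref{lem-3} one cannot cleanly re-express $A_\nu\, \mathcal{D}^\gamma m_\ell$ as a Dunkl derivative of a function of the same structural form (a composition of averaging operators applied to $m$), and without Lemma~\ref{lem-2} one cannot control the commutator of $\mathcal{D}_j$ with multiplication by the monomial $x^\beta$. Once these two tools are in place, the bookkeeping — checking that the integer $2k+|\gamma|-|\beta|$ increases by exactly one under each of the three operations encountered in the inductive step — is routine.
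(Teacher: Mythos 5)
Your proposal is correct and follows essentially the same route as the paper's own proof: induction on $|\alpha|$, splitting off the radial factor $D_r^k h$ via the one-variable Leibnitz rule, controlling the commutator with $x^\beta$ by Lemma~\ref{lem-2}, and absorbing the resulting averaging operators into the $m_\ell$ via Lemma~\ref{lem-3}. The index bookkeeping $2k+|\gamma|-|\beta|$ is verified exactly as in the paper, so there is nothing to add.
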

	
	\begin{proof} As usual we prove this proposition by induction. We have already checked that
		$$ \mathcal{D}_j(m\, g) = g \, \mathcal{D}_j m\,  + m\, \partial_j g = g \, \mathcal{D}_j m\,  + x_j\, m\,  D_r h$$
		which proves the result when $ |\alpha| =1.$  Assume \eqref{leib} for certain $ \alpha $ and consider
		$$\mathcal{D}^{\alpha+e_j}(m\,g)=\sum\limits_{\ell =1}^N \sum\limits_{2k+|\gamma|-|\beta|=|\alpha|} c_{\alpha, \beta,\, \gamma,\, k} \,\, \mathcal{D}_j \big(x^{\beta}\, \mathcal{D}^{\gamma} \,m_\ell\, D_r^k h \big).$$
		As $D_r^k h $ is radial, Leibniz formula for $ \mathcal{D}_j $ gives 
		$$\mathcal{D}_j \big(x^{\beta}\, \mathcal{D}^{\gamma} \,m_\ell\, D_r^k h \big)=  \mathcal{D}_j  \big(x^{\beta}\, \mathcal{D}^{\gamma} \,m_\ell\, \big) D_r^kh + x_j\,x^{\beta}\, \mathcal{D}^{\gamma} \,m_\ell\,
		D_r^{k+1}h.$$
		The contribution of the second term to $\mathcal{D}^{\alpha+e_j}(m\,g) $ is therefore a finite sum of terms of the form
		$$\sum\limits_{2k+|\gamma|-|\beta|=|\alpha|} c_{\alpha, \beta,\, \gamma,\, k} \,\,x^{\beta+e_j}\, \mathcal{D}^{\gamma} \,m_\ell\,
		D_r^{k+1}h = \sum\limits_{2k+|\gamma|-|\beta|=|\alpha+e_j|} c_{\alpha, \beta -e_j,\, \gamma,\, k-1} \,\,x^{\beta}\, \mathcal{D}^{\gamma} \,m_\ell\,
		D_r^{k}h .$$
		To handle the contribution of the first term we make use of the results of the lemmas proved above. In view of Lemma \ref{lem-2} 
		$$\mathcal{D}_j  \big(x^{\beta}\, \mathcal{D}^{\gamma} \,m_\ell\, \big) D_r^kh  =  \Big( x^\beta \, \mathcal{D}^{\gamma+e_j} \,m_\ell\, + \sum\limits_{|\beta^\prime|=|\beta|-1}  x^{\beta^\prime} \,(A_{\nu_j(\beta^\prime)}\circ \mathcal{D}^{\gamma}) \,m_\ell\, \Big) \,D_r^kh .$$
		Once again the first term is of the form:
		$$\sum\limits_{2k+|\gamma|-|\beta|=|\alpha|} c_{\alpha, \beta,\, \gamma,\, k} \,\,x^{\beta}\, \mathcal{D}^{\gamma+e_j} \,m_\ell\,
		D_r^{k}h = \sum\limits_{2k+|\gamma|-|\beta|=|\alpha+e_j|} c_{\alpha, \beta,\, \gamma -e_j,\, k} \,\,x^{\beta}\, \mathcal{D}^{\gamma} m_\ell\,
		D_r^{k}h .$$
		We are now left with the following sum:
		$$\sum\limits_{2k+|\gamma|-|\beta|=|\alpha|}  \sum\limits_{|\beta^\prime|=|\beta|-1} c_{\alpha, \beta,\, \gamma,\, k}\, \,x^{\beta^\prime} \big(A_{\nu_j(\beta^\prime)}\circ \mathcal{D}^{\gamma}\big) m_\ell\,  \,D_r^kh .$$
		We now use the result of Lemma \ref{lem-3} to rewrite $A_{\nu_j(\beta^\prime)}\,\mathcal{D}^{\gamma} m_\ell $  as a finite sum of terms of the form
		$$  \big( \mathcal{D}^{\gamma^\prime} \circ A_{\nu_j(\beta^\prime,\gamma^\prime)} \big)m_\ell ,\text{ where } |\gamma^\prime| = |\gamma|.$$
		Recalling that $ m_\ell = (A_{\nu_1} \circ A_{\nu_2}\circ...\circ A_{\nu_{n_\ell}}) m$ we end up with sums of the form
		$$\sum\limits_{2k+|\gamma^\prime|-|\beta|=|\alpha|}  \sum\limits_{|\beta^\prime|=|\beta|-1} c_{\alpha, \beta,\, \gamma^\prime,\, k}\, \,x^{\beta^\prime} \,\mathcal{D}^{\gamma^\prime} m_{\ell^\prime} \,  \,D_r^kh .$$
		Clearly each of these are in the required form completing our induction hypothesis.
	\end{proof}

	When both $ m $ and $ g $ are radial the formula in the above proposition takes a simple form. First of all observe that when $ m $ is radial $ A_\nu m = c_\nu m $ where $ c_\nu $ is a constant. Thus from from Proposition \ref{Leibniz} we see that
	$ \mathcal{D}^\alpha (m g) $ is a linear combination of terms of the form $ x^\beta\, \mathcal{D}^{\gamma} m\, D_r^k h.$ We also note that when $ m(x) = m_0(|x|), \mathcal{D}_j m(x) = x_j\, D_rm_0(|x|).$ This leads to the following simplified
	Leibniz rule.
	
	\begin{cor}\label{rad-Leibnitz} Let $ m(x) = m_0(|x|),\, g(x) =  h(|x|) $  be two reasonable functions.  Then for any multi-index $\alpha$, we have the following Leibniz type rule
		\begin{equation}\label{rad-leib}
			\mathcal{D}^{\alpha}(m\,g)(x)= \sum\limits_{k+\ell \leq |\alpha|, 2(k+\ell)-j=|\alpha|} P_j(x) \,D_r^k \,m_0(|x|)\, D_r^\ell h(|x|)
		\end{equation}
		where for each $ j, P_j(x) $ is a homogeneous polynomial of degree $j.$
	\end{cor}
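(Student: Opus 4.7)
The plan is to prove the corollary by a direct induction on $|\alpha|$, bypassing the full Leibniz formula of Proposition \ref{Leibniz} and exploiting only the $G$-invariance of radial functions.

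For the base case $|\alpha|=1$: since both $m$ and $g$ are radial and hence $G$-invariant, the simple Leibniz rule \eqref{leib-1} applies, and the difference-quotient part of any Dunkl derivative vanishes on $G$-invariants. Thus $\mathcal{D}_i m(x) = \partial_i m(x) = x_i\, D_r m_0(|x|)$, and likewise for $g$, giving
$$\mathcal{D}_i(mg) = x_i\bigl(D_r m_0(|x|)\,h(|x|) + m_0(|x|)\,D_r h(|x|)\bigr),$$
which is of the required form with $j=1$ and $(k,\ell)\in\{(1,0),(0,1)\}$.

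For the inductive step, assume the formula at level $|\alpha|$ and apply $\mathcal{D}_i$ to a typical term $P_j(x)\,D_r^k m_0(|x|)\,D_r^\ell h(|x|)$. The radial factor $F(|x|) := D_r^k m_0(|x|)\,D_r^\ell h(|x|)$ is $G$-invariant, so \eqref{leib-1} gives
$$\mathcal{D}_i\bigl(P_j F\bigr) = (\mathcal{D}_i P_j)\,F + P_j\,\mathcal{D}_i F.$$
Two observations are needed. First, since $F$ is radial, $\mathcal{D}_i F(|x|) = x_i\, D_r F(|x|)$, and the ordinary Leibniz rule for $D_r$ yields $D_r F = D_r^{k+1}m_0\cdot D_r^\ell h + D_r^k m_0\cdot D_r^{\ell+1}h$. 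Second, $\mathcal{D}_i$ sends homogeneous polynomials of degree $j$ to homogeneous polynomials of degree $j-1$: the classical part $\partial_i P_j$ is homogeneous of degree $j-1$, and each reflection term $\lambda_i\bigl(P_j(x)-P_j(\sigma_\lambda x)\bigr)/\langle\lambda,x\rangle$ is likewise a homogeneous polynomial of degree $j-1$, because $P_j - P_j\circ\sigma_\lambda$ vanishes on the hyperplane $\lambda^\perp=\{\langle\lambda,x\rangle=0\}$ and is therefore divisible by $\langle\lambda,x\rangle$.

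Combining these, applying $\mathcal{D}_i$ to $P_j\,D_r^k m_0\,D_r^\ell h$ produces three types of terms with index triples $(j-1,k,\ell)$, $(j+1,k+1,\ell)$, $(j+1,k,\ell+1)$; a direct check verifies that each satisfies the constraints $2(k'+\ell') - j' = |\alpha|+1$ and $k'+\ell' \le |\alpha|+1$, completing the induction. The only genuinely non-trivial ingredient is the polynomial-degree-lowering property of $\mathcal{D}_i$, which I expect to be the main obstacle to record carefully; every other step is bookkeeping with \eqref{leib-1} and $G$-invariance of the radial factors.
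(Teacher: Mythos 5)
Your proof is correct, and it reaches the same inductive structure as the paper's proof (same base case, same three families of index triples $(j-1,k,\ell)$, $(j+1,k+1,\ell)$, $(j+1,k,\ell+1)$ produced at each step), but it justifies the key step by a genuinely different and more elementary mechanism. The paper expands $P_j$ into monomials $x^\beta$ and invokes the commutator relation of Lemma \ref{lem-2}, $\mathcal{D}_1(x^\beta F)-x^\beta\mathcal{D}_1F=\sum_{|\beta'|=|\beta|-1}x^{\beta'}A_{\nu_1(\beta')}F$, then uses the fact that the averaging operators $A_\nu$ act as scalars on the radial factor $F=D_r^k m_0\,D_r^\ell h$ to produce the degree-$(j-1)$ polynomial $Q_{\beta,j}$. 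You instead apply the one-factor-invariant Leibniz rule \eqref{leib-1} directly to $P_jF$ (legitimate since $F$ is $G$-invariant) and observe that $\mathcal{D}_i$ lowers the degree of a homogeneous polynomial by one, with the difference quotients handled by the divisibility of $P_j-P_j\circ\sigma_\lambda$ by the linear form $\langle\lambda,x\rangle$. This bypasses the averaging operators and Lemma \ref{lem-2} entirely, which makes the corollary self-contained; what it gives up is the connection to the machinery needed for the non-radial case (Proposition \ref{Leibniz}), where the $A_\nu$ cannot be avoided because $A_\nu m$ is no longer a scalar multiple of $m$. Your degree-lowering claim is the standard fact that Dunkl operators are homogeneous of degree $-1$ on polynomials, and your justification of it is sound.
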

	\begin{proof} From the Leibniz formula 
		$$ \mathcal{D}_j(m\, g) = g \, \partial_j m\,  + m\, \partial_j g = x_j( g \, D_r m_0\,  +  m_0\,  D_r h) $$
		which shows that the corollary is true for $ |\alpha|=1.$ Assuming the formula for a certain $ \alpha $ we consider 
		\begin{equation}\label{rad-leib1}
			\mathcal{D}^{\alpha+e_1}(m\,g)= \sum\limits_{k+\ell \leq |\alpha|, 2(k+\ell)-j=|\alpha|} \mathcal{D}_1 \Big( P_j \,D_r^k \,m_0\, D_r^\ell h\Big).
		\end{equation}
		As $ P_j $ is homogeneous of degree $ j $ we have		
		$$ \mathcal{D}_1 \Big( P_j \,D_r^k \,m_0\, D_r^\ell h\Big) =\sum_{|\beta|=j} c_\beta  \mathcal{D}_1 \Big( x^\beta \,D_r^k \,m_0\, D_r^\ell h\Big).$$
		Using the result of Lemma \ref{lem-2} and noting that $ D_r^k \,m_0\, D_r^\ell h $ is radial we have
		$$ \mathcal{D}_1 \Big(x^\beta\, D_r^k \,m_0\, D_r^\ell h\Big) = x^{\beta+e_1} \, D_r\Big( D_r^k \,m_0\, D_r^\ell h\Big) + Q_{\beta,j}(x) \Big(D_r^k \,m_0\, D_r^\ell h\Big)$$
		where $ Q_{\beta, j} $ is a homogeneous polynomial of degree $(j-1).$  Noting that  
		$$ D_r \Big( D_r^k \,m_0\, D_r^\ell h\Big) = \Big( D_r^{k+1} \,m_0\, D_r^\ell h + D_r^k m_0\, D_r^{\ell+1}h \Big),$$
		and putting all the terms together  we can check that the $ \mathcal{D}^{\alpha+e_1}(mg) $ is in the required form. This completes the induction.
	\end{proof}

	\section{Revisiting Fourier multipliers}\label{sec-revisit Fourier multipliers}
	An efficient and elegant approach to establishing the Fourier multiplier theorem is through the use of Littlewood-Paley square functions, as thoroughly presented in Stein's book \cite{Stein}. In this section we briefly recall the theory of $g$-functions and describe how it can be used to prove a version of H\"ormander-Mihlin multiplier theorem for the Fourier transform.
	
	\subsection{Littlewood-Paley square functions} Given a measure space $ (X, \mathcal{B},\mu) $ let $ L^p(X,d\mu) $ be the associated $ L^p $ spaces. Recall that   $ \{T_t \}_{t\geq 0}$ is a symmetric diffusion semigroup acting on $ L^p(X,d\mu), 1 \leq p \leq \infty  $ if they are contractions on $ L^p(x, d\mu),$ self-adjoint  on $ L^2(X, d\mu) $ such that $ T_tf \geq 0$ for $ f \geq 0 $ and $ T_t1 =1.$ Given such a semigroup, we define the $ g$-functions by
	$$ g_k(f,x) = \left( \int_0^\infty |\partial_t^k T_tf(x)|^2\, t^{2k-1}\, dt\right)^{1/2}.$$
	Using theory of martingales, Stein \cite{Stein} has proved that $ \| g_k(f)\| \leq C\|f\|_p, 1 < p < \infty $ and under the extra assumption that $ \| g_1(f)\|_2 = c \|f\|_2$ we have the equivalence of norms:
	$$  A\, \|f \|_p  \leq \|g_k(f)\|_p \leq B \, \|f\|_p.$$
	Let us specialise this to the case when $ X =\R^n $ and $ \{T_t \}_{t\geq 0} $ is the heat semigroup defined by 
	$$ T_tf(x) = f \ast p_t(x) = (2\pi)^{-n/2} \int_{\R^n} e^{i x \cdot \xi}\, \widehat{f}(\xi)\, e^{-t|\xi|^2}\, d\xi $$
	where $ p_t(x) $ is the heat kernel associated to the Laplacian $ \Delta$ on $ \R^n.$ 
	
	In this case the $ g_k $-functions can be defined even for non-integer values of $ k.$ For any $ s > 0 $ let us define
	$$ g_s(f,x) = \left( \int_0^\infty |G_sf(x,t)|^2 t^{2s-1}\, dt \right)^{1/2}$$
	where we have set 
	$$ G_sf(x,t) = (2\pi)^{-n/2} \int_{\R^n} e^{i x\cdot \xi}\, \widehat{f}(\xi)\,  |\xi|^{2s} e^{-t|\xi|^2}\, d\xi .$$
	Observe that  when $ s = k $ is an integer, $ G_sf(x,t) = \partial_t^kT_tf(x) $ so that we get back the $g_k$ function defined earlier. For these square functions we can prove the following result. Since the general theory does not apply ( when $ s $ is not an integer) we have a different proof.
	
	\begin{thm}\label{thm 1st} Let $ s > 1/2 $ and  $ 1 < p < \infty.$ Then for all $ f \in L^p(\R^n) $
		$$  A\, \|f \|_p  \leq \|g_s(f)\|_p \leq B \, \|f\|_p.$$
	\end{thm}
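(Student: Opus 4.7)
The plan is to realize $g_s$ as the $L^2(dt/t)$-norm of a Hilbert-space-valued convolution operator and apply vector-valued Calder\'on-Zygmund theory for the upper bound; the lower bound will then follow by a duality argument built on the polarized $L^2$ identity. The first step is the exact $L^2$ equality $\|g_sf\|_2^2 = c_s\|f\|_2^2$: by Plancherel in $x$ and the elementary formula $\int_0^\infty t^{2s-1}e^{-2tr^2}\,dt = \Gamma(2s)(2r^2)^{-2s}$, one computes
$$\|g_sf\|_2^2 = \int_{\R^n}|\widehat f(\xi)|^2|\xi|^{4s}\int_0^\infty t^{2s-1}e^{-2t|\xi|^2}\,dt\,d\xi = c_s\|f\|_2^2.$$

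I would then set $Tf(x,t):= t^s G_sf(x,t)$, so that $g_sf(x) = \|Tf(x,\cdot)\|_{L^2(dt/t)}$ and $Tf(x,t) = f*\phi_t(x)$ where $\phi_t(x) = t^{-n/2}\psi(x/\sqrt t)$ and $\widehat\psi(\xi) = |\xi|^{2s}e^{-|\xi|^2}$. Thus $T$ is convolution with an $L^2(dt/t)$-valued kernel $K(x) := (\phi_t(x))_{t>0}$, and the preceding identity shows $T\colon L^2(\R^n)\to L^2(\R^n;L^2(dt/t))$ is bounded. The crucial step is to verify the vector-valued H\"ormander condition
$$\int_{|x|>2|y|}\|K(x-y)-K(x)\|_{L^2(dt/t)}\,dx \le C,$$
after which the Benedek-Calder\'on-Panzone theorem upgrades the $L^2$ bound to an $L^p\to L^p(L^2(dt/t))$ bound for all $1<p<\infty$; this is exactly $\|g_sf\|_p \le B_p\|f\|_p$.

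For the reverse inequality, polarization of the $L^2$ identity gives
$$\int_{\R^n} f\,\overline g\,dx = c_s^{-1}\int_{\R^n}\int_0^\infty G_sf(x,t)\,\overline{G_sg(x,t)}\,t^{2s-1}\,dt\,dx.$$
Cauchy-Schwarz in $t$ and H\"older in $x$, combined with the upper bound already proved at the conjugate exponent $p'$ applied to $g$, yield $\bigl|\int f\,\bar g\,dx\bigr| \le c_s^{-1}B_{p'}\|g_sf\|_p\|g\|_{p'}$; taking the supremum over $\|g\|_{p'}=1$ produces $\|f\|_p\le A_p^{-1}\|g_sf\|_p$, completing the proof.

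The main obstacle is the vector-valued H\"ormander estimate. Since $s$ need not be an integer, $|\xi|^{2s}$ is only finitely smooth at the origin, so $\psi$ is smooth but has merely polynomial decay $|\psi(x)|,|\nabla\psi(x)|\lesssim(1+|x|)^{-n-2s}$ (obtained from the standard asymptotic analysis of Fourier transforms of functions with an isolated origin singularity). The hypothesis $s>1/2$ is precisely what ensures integrability after a mean-value trade; one handles the $L^2(dt/t)$ norm of $K(x-y)-K(x)$ by splitting the $t$-integral at scale $|x|^2$ and using the change of variable $u = |x|^2/t$, reducing to scalar integrals that converge exactly when $2s-1>0$.
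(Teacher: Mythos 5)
Your proposal is correct and follows essentially the same route as the paper: the exact $L^2$ identity via Plancherel, realization of $g_s$ as convolution with an $L^2(t^{2s-1}dt)$-valued kernel, the scaling computation reducing the kernel bounds to the decay of $\psi$ (where $s>1/2$ enters), vector-valued Calder\'on--Zygmund theory for the upper bound, and polarization/duality for the lower bound. The only difference is cosmetic: you verify the integrated H\"ormander condition and write out the duality step explicitly, whereas the paper establishes the pointwise size and gradient estimates $\|K_s(x,\cdot)\|\lesssim |x|^{-n}$, $\|\nabla K_s(x,\cdot)\|\lesssim |x|^{-n-1}$ and leaves the reverse inequality to the standard argument.
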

	\begin{proof} By Plancherel, it immediately follows that $ \|g_s(f)\|_2 = 2^{-s} \sqrt{\Gamma(2s)} \| f \|_2.$ Therefore, it is enough to show that $ f \rightarrow g_s(f) $ is bounded on $ L^p(\R^n).$ To prove this, we note that $ G_s: f \rightarrow G_sf(x,t) $ is a convolution operator, viz. $ G_sf(x,t) = f \ast K_s(x,t) $ where
		$$ K_s(x,t) = \int_{\R^n} e^{i x\cdot \xi}\,  |\xi|^{2s} e^{-t|\xi|^2}\, d\xi .$$
		If we can show that $ K_s(x,t) $ is a Calderon-Zygmund kernel taking values in the Hilbert space $ \mathcal{H}_s = L^2(\R^+,t^{2s-1}dt), $ then we can appeal to the theory of vector-valued singular integral operator to prove the boundedness of the $g_s$-function.
		Now the kernel $ K_s(x,t) $ is clearly radial and satisfies $ K_s(x,t) = t^{-n/2-s} K_s(x/\sqrt{t},1) = t^{-n/2-s} k_s(|x|/\sqrt{t})$, where $k_s(|x|):=K_s(x, 1)$. Therefore, we calculate that
		$$ \int_0^\infty |K_s(x,t)|^2 t^{2s-1} dt = \int_0^\infty  t^{-n-1} |k_s(|x|/\sqrt{t})|^2 dt = C_s\, |x|^{-2n} $$ 
		where the constant $ C_s $ is given by the integral
		$$  2 \int_0^\infty  t^{2n-1} |k_s(t)|^2 dt  = c_n \int_{\R^n} |x|^n \, |K_s(x,1)|^2 dx. $$
		It is easy to see that this integral is finite. Indeed, integrating by parts in the integral defining $ K_s(x,1)$ we have
		$$ x^\alpha K_s(x,1) = i^{|\alpha|} \int_{\R^n} e^{i x\cdot \xi} \, \partial^\alpha( |\xi|^{2s} e^{-|\xi|^2}) \, d\xi.$$
		As we are assuming $ s > 1/2$ the above gives the estimate $ |K_s(x,1)| \leq C_\alpha |x|^{-|\alpha|} $ as long as $ |\alpha | \leq n+1.$
		Hence $ C_s < \infty $ and we get the estimate 
		$ \| K_s(x,\cdot) \|_{\mathcal{H}_s}  \leq C |x|^{-n}.$ In a similar  way we can show that $ \| \nabla K_s(x,\cdot)\|_{\mathcal{H}_s} \leq C_s |x|^{-n-1}.$
	\end{proof}
	
	Along with $g$-functions we also need $g^\ast$-functions which are defined as follows.
	$$ g_s^\ast(f,x)^2 = \int_0^\infty \left( \int_{\R^n} |\partial_t T_tf(y)|^2 (1+t^{-1}|x-y|^2)^{-s} dy\right) t^{-n/2+1} dt.$$

	\begin{thm}\label{thm 2nd} Let $ s > n/2 .$ Then for any $ 2 < p < \infty $ we have $ \| g_s^\ast(f) \|_p \leq C \|f\|_p.$
	\end{thm}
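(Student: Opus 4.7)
My plan is to follow the classical Stein argument for $g^\ast$-functions, reducing the $L^p$ bound (for $p > 2$) to the already-established $L^p$ bound for the ordinary $g$-function $g_1$, combined with the Hardy--Littlewood maximal inequality. The idea is duality: since $p > 2$, I will test $(g_s^\ast f)^2$ against an arbitrary nonnegative $\varphi \in L^{(p/2)'}(\R^n)$ with $\|\varphi\|_{(p/2)'} = 1$, where $(p/2)' = p/(p-2) \in (1,\infty)$, and show
\begin{equation*}
\int_{\R^n} g_s^\ast(f,x)^2\, \varphi(x)\, dx \leq C\,\|f\|_p^2.
\end{equation*}

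First I would swap the order of integration using Fubini, writing
\begin{equation*}
\int_{\R^n} g_s^\ast(f,x)^2\, \varphi(x)\, dx = \int_0^\infty\!\!\int_{\R^n} |\partial_t T_t f(y)|^2 \,I_\varphi(y,t)\, t^{-n/2+1}\, dy\, dt,
\end{equation*}
where $I_\varphi(y,t) = \int_{\R^n} (1+t^{-1}|x-y|^2)^{-s}\,\varphi(x)\,dx$. The key geometric estimate is that, since $s > n/2$, the kernel $(1+t^{-1}|x-y|^2)^{-s}$ can be decomposed dyadically into annuli $|x-y|\sim 2^k\sqrt t$ with weights $2^{-2sk}$; averaging $\varphi$ on each ball of radius $2^k\sqrt t$ produces $t^{n/2}\,2^{nk}\,M\varphi(y)$, and summing $\sum_k 2^{(n-2s)k}$ converges precisely because $s > n/2$. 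This yields the pointwise bound
\begin{equation*}
I_\varphi(y,t) \leq C_s\, t^{n/2}\, M\varphi(y),
\end{equation*}
where $M$ is the Hardy--Littlewood maximal operator.

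Plugging this into the integral, the factor $t^{-n/2+1}\cdot t^{n/2} = t$ appears, so the $y$-integral reduces exactly to $g_1(f,y)^2$:
\begin{equation*}
\int_{\R^n} g_s^\ast(f,x)^2\, \varphi(x)\, dx \leq C_s \int_{\R^n} M\varphi(y)\, g_1(f,y)^2\, dy.
\end{equation*}
Applying Hölder's inequality with exponents $(p/2)'$ and $p/2$, and using the $L^{(p/2)'}$-boundedness of $M$ (which needs $(p/2)' > 1$, i.e. $p > 2$) together with the previously proven inequality $\|g_1(f)\|_p \leq C\|f\|_p$, I obtain
\begin{equation*}
\int_{\R^n} g_s^\ast(f,x)^2\, \varphi(x)\, dx \leq C \|M\varphi\|_{(p/2)'}\,\|g_1(f)\|_p^2 \leq C\,\|f\|_p^2.
\end{equation*}
Taking the supremum over admissible $\varphi$ and extracting the square root gives the claim.

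The only real obstacle is the dyadic decomposition argument proving $I_\varphi(y,t) \leq C\,t^{n/2}\,M\varphi(y)$: this is where the hypothesis $s > n/2$ is used in an essential way, and the bookkeeping of the $t$-power must be done carefully so that the exponent $t^{-n/2+1}$ appearing in the definition of $g_s^\ast$ combines with the $t^{n/2}$ from $I_\varphi$ to give exactly the weight $t$ required to match $g_1^2$. Once that is set up, the rest is duality plus two results already in hand (boundedness of $M$ and of $g_1$).
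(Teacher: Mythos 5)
Your proposal is correct and is exactly the argument the paper has in mind: the paper's own ``proof'' is the single sentence that the result follows from the properties of the $g_1$-function and the boundedness of the Hardy--Littlewood maximal function, which is precisely the classical Stein duality argument you carry out in detail. Your dyadic estimate $I_\varphi(y,t)\leq C_s\,t^{n/2}M\varphi(y)$ (using $s>n/2$), the reduction to $g_1(f,y)^2$ via the weight bookkeeping $t^{-n/2+1}\cdot t^{n/2}=t$, and the final H\"older step with $(p/2)'>1$ are all sound, so you have simply supplied the details the paper omits.
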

	
	This result is proved by using the properties of the $g_1 $ function and  the boundedness of the Hardy-Littlewood maximal function. 
	
	\subsection{A version of H\"ormander-Mihlin multiplier theorem} The boundedness of the Fourier multiplier $ T_m $ on $ L^p(\R^n) $ is proved under the following condition on the multiplier. For any $ \psi \in C_0^\infty(\R^n) $ supported in the complement of $ 0$ define 
	$$  \widehat{m}_\psi(x,t) = \int_{\R^n} e^{i x \cdot \xi} \psi(\xi) m( \xi/\sqrt{t}) d\xi.$$ 
	We say that $ m $ satisfies  H\"ormander's condition of order $ s $ if  for any such $ \psi$
	$$ \int_{\R^n} (1+|x|^2)^s | \widehat{m}_\psi(x,t)|^2 dx \leq C_\psi $$ 
	for all $ t> 0.$  By replacing $ \psi $ by $ |\cdot|^{2s} \psi $ we can rewrite the above condition as follows. Let
	$$ \widetilde{m}_{\psi,s}(x ,t) = \int_{\R^n} e^{i x \cdot \xi} |\xi|^{2s} \psi(\sqrt{t} \xi) m( \xi) d\xi.$$ 
	Then the above condition on $ m $ is equivalent to the estimate 
	$$\int_{\R^n} (1+t^{-1}|x|^2)^s |\widetilde{m}_{\psi,s}(x,t)|^2\, dx \leq C_{\psi,s}\,  t^{-2s-n/2}.$$
	Replacing the function $ \psi $ by the Gaussian $ e^{-|\xi|^2} $ in the above  let us define
	$$ \widehat{m}_s(x,t)  =\int_{\R^n} e^{i x \cdot \xi} |\xi|^{2s}e^{-|\xi|^2} m( \xi/\sqrt{t}) d\xi.$$  
	We say that $ m $ satisfies  the modified H\"ormander's condition of order $ s $ if  
	\begin{equation}\label{modH}  \sup_{t >0} \int_{\R^n} (1+|x|^2)^s | \widehat{m}_s(x,t)|^2 dx \leq C_s.
	\end{equation}
	Clearly, this condition is slightly stronger than H\"ormander's condition. To see this, we note that in the H\"ormander's condition we can replace the function $\psi$ by $\psi_1$, where $\psi_1(\xi)=\psi (\xi)\, |\xi|^{2s}$. Then we can write $\widehat{m}_{\psi_1}(x,t)=\psi_2 \ast \widehat{m}_s(\cdot,t)(x),$ where $\widehat{\psi}_2(\xi)=\psi(\xi)\,e^{|\xi|^2}$. Now a straightforward calculation using Minkowski's integral inequality shows that for any $t>0$, 
		$$\int_{\R^n} (1+|x|^2)^s | \widehat{m}_{\psi_1}(x,t)|^2 dx \leq C_{\psi_2,\, s} \int_{\R^n} (1+|x|^2)^s | \widehat{m}_s(x,t)|^2 dx.$$
	As before by defining
	$$ \widetilde{m}_s(x,t) = \int_{\R^n} e^{i x\cdot \xi} \, |\xi|^{2s} m(\xi) e^{-t|\xi|^2} \, d\xi $$ 
	we can show that the modified H\"ormander's condition is equivalent to
	\begin{equation}\label{modified HM}
		\int_{\R^n} (1+t^{-1}|x|^2)^s |\widetilde{m}_s(x,t)|^2\, dx \leq C\, t^{-2s-n/2}.
	\end{equation}
	Using the properties of the $g$ and $ g^\ast$ functions we can easily prove a version of multiplier theorem under a modified H\"ormander condition.
	
	\begin{thm} Let $ m $ be a bounded function which satisfies the condition \eqref{modified HM} for some $ s> n/2.$ Then the Fourier multiplier $ T_m $ is bounded on $ L^p(\R^n) $ for any $ 1 < p < \infty.$
	\end{thm}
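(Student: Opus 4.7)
The plan is to assemble the theorem from the two Littlewood--Paley estimates just established, combined with a duality reduction. Since $\overline m$ inherits condition \eqref{modified HM} from $m$ and $T_m^\ast=T_{\overline m}$, it suffices to prove boundedness for $p\geq 2$. The case $p=2$ is immediate from Plancherel and $\|m\|_\infty<\infty$, so fix $2<p<\infty$. The scheme is
$$\|T_m f\|_p \,\lesssim\, \|g_s(T_m f)\|_p \,\lesssim\, \|g_s^\ast(f)\|_p \,\lesssim\, \|f\|_p,$$
where the first inequality is the $g$-function equivalence (applicable since $s>n/2\geq 1/2$) and the third is the $g_s^\ast$ estimate for $p>2$. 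The heart of the matter is the middle inequality, which I will obtain as a pointwise bound $g_s(T_m f)(x)\leq C\,g_s^\ast(f)(x)$.

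To prove the pointwise bound, start from
$$G_s(T_m f)(x,t)=\int_{\R^n}e^{ix\cdot\xi}\,|\xi|^{2s}m(\xi)\,e^{-t|\xi|^2}\widehat f(\xi)\,d\xi,$$
and factor the symbol as $|\xi|^{2s}e^{-t|\xi|^2}=\bigl(|\xi|^{2(s-1)}m(\xi)e^{-(t/2)|\xi|^2}\bigr)\cdot\bigl(|\xi|^{2}e^{-(t/2)|\xi|^2}\bigr)$. Fourier inversion then represents $G_s(T_m f)(\cdot,t)$, up to a constant, as the convolution of $\widetilde m_{s-1}(\cdot,t/2)$ with $\partial_t T_{t/2}f$. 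Cauchy--Schwarz with the weight $(1+t^{-1}|x-y|^2)^{s/2}$ splits the square of this convolution as $A(x,t)\cdot B(x,t)$, where
$$A(x,t)=\int_{\R^n}|\widetilde m_{s-1}(x-y,t/2)|^2(1+t^{-1}|x-y|^2)^{s}\,dy,$$
$$B(x,t)=\int_{\R^n}|\partial_t T_{t/2}f(y)|^2(1+t^{-1}|x-y|^2)^{-s}\,dy.$$
I bound $A(x,t)$ by transferring \eqref{modified HM} from $\widetilde m_s$ to $\widetilde m_{s-1}$ via the identity $\widetilde m_{s-1}(x,t)=\int_t^\infty\widetilde m_s(x,u)\,du$, Minkowski's inequality, and the elementary estimate $(1+t^{-1}|x|^2)^{s/2}\leq(u/t)^{s/2}(1+u^{-1}|x|^2)^{s/2}$ valid for $u\geq t$; this yields $A(x,t)\leq C\,t^{-2(s-1)-n/2}$.

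Inserting this estimate, multiplying by $t^{2s-1}$, integrating in $t$, and substituting $u=t/2$ collapses the resulting double integral into $C\,g_s^\ast(f)(x)^2$, which is exactly the pointwise bound. Chaining this with the outer scheme gives $\|T_m f\|_p\lesssim\|f\|_p$ for $2<p<\infty$, and combined with $p=2$ (Plancherel) and duality the theorem follows. The delicate step is the Minkowski-based transfer: one needs the integral $\int_t^\infty(u/t)^{s/2}u^{-s-n/4}\,du$ to converge with rate $t^{-(s-1)-n/4}$, which is where the hypothesis $s>n/2$ enters beyond merely ensuring boundedness of $g_s^\ast$. If in a particular dimension the power $s-1$ is too small for this book-keeping, the same scheme goes through after factoring $|\xi|^{2k}e^{-(t/2)|\xi|^2}$ with a larger integer $k$ in place of $|\xi|^{2}e^{-(t/2)|\xi|^2}$, replacing $\partial_t T_{t/2}f$ by $\partial_t^k T_{t/2}f$ and appealing to a higher-order variant of $g_s^\ast$ in the last step.
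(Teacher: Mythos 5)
Your overall scheme is the paper's: reduce to $p>2$ by duality, dominate the multiplier operator by a $g$-function, factor the symbol into a ``multiplier part'' times a ``heat-derivative part'' $|\xi|^2e^{-t|\xi|^2}$, apply Cauchy--Schwarz against the weight $(1+t^{-1}|\cdot|^2)^{\pm s/2}$, invoke \eqref{modified HM} on the multiplier part, and recognize $g_s^\ast(f)$ in what remains. The difference is that you estimate $g_s(T_mf)$, which forces the multiplier part to be $\widetilde m_{s-1}$, whereas the paper estimates $g_{s+1}(T_mf)(x)$ via $G_{s+1}T_mf(x,2t)$, for which the factorization $|\xi|^{2(s+1)}m(\xi)e^{-2t|\xi|^2}=\big(|\xi|^{2s}m(\xi)e^{-t|\xi|^2}\big)\big(|\xi|^{2}e^{-t|\xi|^2}\big)$ produces exactly $\widetilde m_s$, so the hypothesis applies verbatim and no transfer between orders is needed.

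This is where your argument has a genuine gap. Your transfer of \eqref{modified HM} from $\widetilde m_s$ to $\widetilde m_{s-1}$ via $\widetilde m_{s-1}(x,t)=\int_t^\infty\widetilde m_s(x,u)\,du$ and Minkowski leads to the integral $t^{-s/2}\int_t^\infty u^{-s/2-n/4}\,du$, which converges only when $s>2-\tfrac n2$. Under the hypothesis $s>n/2$ this is automatic for $n\geq 2$, but for $n=1$ it fails throughout the admissible range $\tfrac12<s\leq\tfrac32$, so your proof of the bound $A(x,t)\leq Ct^{-2(s-1)-n/2}$ breaks down there. Moreover, your proposed remedy points the wrong way: replacing $|\xi|^2e^{-(t/2)|\xi|^2}$ by $|\xi|^{2k}e^{-(t/2)|\xi|^2}$ leaves $\widetilde m_{s-k}$ as the multiplier part, the transfer becomes a $k$-fold integral of $\widetilde m_s$, and the convergence condition worsens to $s>2k-\tfrac n2$. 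The correct fix is the opposite shift, i.e.\ to raise the order of the $g$-function from $s$ to $s+1$ (which is legitimate since Theorem 3.1 gives the norm equivalence for every order exceeding $\tfrac12$); then the multiplier part is $\widetilde m_s$ itself and the rest of your computation goes through unchanged. With that modification your proof coincides with the paper's.
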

	\begin{proof} In view of Theorem \ref{thm 1st} we have $ \|T_mf\|_p \leq C \| g_{s+1}(T_mf)\|_p.$ If we can show that  $ g_{s+1}(T_mf,x) \leq C g_s^\ast(f,x) $ pointwise, then by Theorem \ref{thm 2nd} we get $ \|T_mf\|_p \leq C \|f\|_p $ for any $ p >2.$ For $ 1< p < 2$  we can use duality. Recalling the definition $ G_{s+1}f(x,t) $ and $ \widetilde{m}_s(x,t) $ we can write
		$$ G_{s+1}T_mf(x,2t) = \int_{\R^n} \partial_t T_tf(x-y)\, (1+t^{-1}|y|^2)^{-s/2}  (1+t^{-1}|y|^2)^{s/2} \widetilde{m}_s(y,t)\, dy.$$
		Under the assumption that $ m $ satisfies \eqref{modified HM} we get
		$$ |G_{s+1}T_mf(x,2t)|^2 \leq C t^{-2s-n/2}  \int_{\R^n} |\partial_t T_tf(x-y)|^2\, (1+t^{-1}|y|^2)^{-s} \, dy.$$
		Integrating the above against $ t^{2s+1} dt $ we obtain  $ g_{s+1}(T_mf,x) \leq C g_s^\ast(f,x) $.
	\end{proof}
	
	\begin{rem} Consider the following variant of H\"ormander condition 
		$$ \left( \int_{R \leq |\xi| \leq 2R } | \partial^\alpha m(\xi)|^2 \, d\xi \right)^{1/2} \leq  C R^{-|\alpha|+\frac{1}{2}n}.$$
		We claim that this is stronger than the condition \eqref{modH} when $ s = k $ is an integer. To see that we need to estimate the integral
		$$ \int_{\R^n}  |(1-\Delta)^{k/2}\left(|\xi|^{2k}e^{-|\xi|^2} m( \xi/\sqrt{t})\right)|^2 d\xi.$$  
		Since Sobolev spaces of integer order can be described in terms of derivatives, it is enough to estimate
		$$ \int_{\R^n}  |\partial^\alpha\left(|\xi|^{2k}e^{-|\xi|^2} m( \xi/\sqrt{t})\right)|^2 d\xi$$ 
		for all $ |\alpha|\leq k.$ Consider the term when all the derivatives fall on $ m$ which  is given by
		$$ t^{-|\alpha|+2k+n/2} \int_{\R^n}  |\xi|^{4k}e^{-2t|\xi|^2}  |\partial^\alpha m(\xi)|^2 d\xi. $$ 
		To estimate the above integral we note that
		$$ \int_{\R^n}  |\xi|^{4k}e^{-2t|\xi|^2}  |\partial^\alpha m(\xi)|^2 d\xi \leq  C \sum_{j=-\infty}^\infty 2^{2kj} e^{-t 2^{j+1}}  \int_{ 2^{j/2} \leq |\xi| \leq 2^{(j+1)/2}}  |\partial^\alpha m(\xi)|^2 d\xi.$$ 
		As $ m $ is assumed to satisfy the H\"ormander condition, we get the estimate
		$$ t^{-|\alpha|+2k+n/2} \int_{\R^n}  |\xi|^{4k}e^{-2t|\xi|^2}  |\partial^\alpha m(\xi)|^2 d\xi \leq C\sum_{j=-\infty}^\infty \left(t\,2^j\right)^{(-|\alpha|+n/2+2k)} \, e^{-t \,2^{j+1}} .$$
		The above series converges for all $ t > 0 $ and defines a locally bounded function $ \psi(t) $ which satisfies $ \psi(t) = \psi(2t).$ Hence $ \psi(t) $ is a bounded function. In a similar way we can estimate the other terms as well.
	\end{rem}

	\section{Multiplier theorems for the Dunkl transform}\label{sec-multiplier thm for Dunkl trans}
	In this section we use Littlewood-Paley-Stein theory of $g$-functions defined by the semigroup $\{T_t^\delta\}_{t\geq 0}= \{e^{-t(-\Dk)^\delta}\}_{t\geq 0} $ and prove multiplier theorems. When $ m $ is radial we recover the known result due to Dai and Wang \cite{WangATT}. For the general case we prove the multiplier theorem for radial functions in $ L^p(\R^d,h_\K^2)$ but only for $ p \geq 2.$
	
	\subsection{Littlewood-Paley square functions in the Dunkl setting}  If we let $ \{T_t \}_{t\geq 0} $ stand for the heat semigroup generated by the Dunkl Laplacian $ \Delta_{\kappa} $ then general theory applies and we get the boundedness of $ g_k , k \in \mathbb N$ and $ g_s^\ast , s >d_\K/2 $  as in the case of Fourier transform. However, if we want to prove the boundedness of $ g_s$-functions for non integral values of $ s $, then there is no readily available singular integral theory for convolution type operators with optimal conditions. The main obstacle is the lack of knowledge about the $ L^p$ boundedness of the Dunkl translation operator $ \tau_x.$ There are convolution type singular integral theories developed in the Dunkl setting (see, for example, \cite{HejnaSIIRDS, HejnaUALBFLPSF, LiDFSFEFDO, S, S1}) but it is difficult verify the required strong assumptions on the kernel. For example we need pointwise estimates for the translation of the kernel
	$$ K_s(x,t) = \int_{\R^d} E_\K (ix,\xi) |\xi|^{2s} e^{-t|\xi|^2}\, h^2_\kappa(\xi) \, d\xi.$$
	When $ s=k $ is a positive integer, $ \tau_\K(x)K_s(y,t) = \partial_t^k h_t(x,y)$ where $ h_t(x,y) $ is the Dunkl heat kernel. Hence in this case we can use known estimates of $ h_t(x,y) $ and its derivatives $\partial_{x}^{\alpha}h_t(x,y)$ and $\partial_{y}^{\alpha}h_t(x,y)$ (see \cite[Theorem 4.1]{HejnaHFCH}). However, when $ s $ is not an integer we do not know how to estimate $ \tau_\K(x)K_s(y,t).$
	
	When $ s > 0 $ is not an integer, we choose $ k $ as the least integer  such that $ s <k $ and let $ \delta = s/k <1.$ Instead of the semigroup $ \{T_t\}_{t\geq 0} =\{ e^{-t(-\Delta_\kappa)}\}_{t\geq 0} $ we consider the semigroup $ \{T_{t,\delta}\}_{t\geq 0} =\{e^{-t(-\Delta_\kappa)^\delta}\}_{t\geq 0} $ generated by the fractional power $ (-\Delta_\kappa)^\delta.$
	From the above representation \eqref{boch-sub} it is clear that $\{T_{t,\delta}\}_{t\geq 0} $ is a symmetric diffusion semigroup in the sense of Stein \cite{Stein} and by general theory we have the following result.  Let $ g_{j,\delta}$ be the $ g $ function associated to $ T_{t,\delta} $ and for $ s> 0$ define
	$$ g_{s,\delta}^\ast(f,x)^2 = \int_0^\infty \left( \int_{\R^d} |\partial_t T_{t,\delta}f(y)|^2 \tau_\K(x)(1+t^{-\frac{1}{\delta}}|-y|^2)^{-s} h^2_\kappa(y) dy\right) t^{-\frac{d_\K}{2\delta}+1} dt.$$
	
	\begin{thm} For  any  $ 1 < p < \infty,$ we have $ A\, \|f\|_p \leq \|g_{j,\delta}(f)\|_p \leq B\, \|f\|_p$  for all $ f \in L^p(\R^d, h_\kappa^2 ).$ If $ s >d_\K/2 $ and $ p > 2 $ we also have $ \|g_{s,\delta}^\ast(f)\|_p \leq C\, \|f\|_p.$
		
	\end{thm}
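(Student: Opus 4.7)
The two assertions correspond to the $g$-function equivalence and the $g^{\ast}$ bound, both treated as in Stein's book \cite{Stein} but adapted to the Dunkl setting.

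For the equivalence of $g_{j,\delta}$, the point is that Bochner's subordination \eqref{boch-sub} exhibits $T_{t,\delta}$ as a positive average of $T_s$, so $T_{t,\delta}$ inherits from $T_t$ the structure of a symmetric diffusion semigroup on $L^p(\R^d,h_\K^2)$ in Stein's sense. Stein's Littlewood-Paley-Stein theorem then immediately delivers $\|g_{j,\delta}(f)\|_p\le B\|f\|_p$ for every $1<p<\infty$ and every integer $j\ge 1$. To extract the reverse inequality from the same general theorem it suffices to check the $L^2$ identity $\|g_{1,\delta}(f)\|_2=c_\delta\|f\|_2$. This is a Plancherel computation: $\Fk(\partial_t T_{t,\delta}f)(\xi)=-|\xi|^{2\delta}e^{-t|\xi|^{2\delta}}\Fk f(\xi)$, so
\begin{equation*}
\|g_{1,\delta}(f)\|_2^{2}=\int_0^\infty\!\!\int_{\R^d}|\xi|^{4\delta}e^{-2t|\xi|^{2\delta}}|\Fk f(\xi)|^2 h_\K^2(\xi)\,d\xi\,t\,dt,
\end{equation*}
and evaluating the $t$-integral yields a constant multiple of $\|\Fk f\|_2^2=\|f\|_2^2$.

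For the $g^{\ast}_{s,\delta}$ bound with $p>2$ the strategy is the standard duality argument. Pair $g^{\ast}_{s,\delta}(f)^2$ with a nonnegative $\phi\in L^{(p/2)'}(\R^d,h_\K^2)$. By Fubini and the self-adjointness of Dunkl translation (Proposition \ref{properties of Dunkl trans}(iii)),
\begin{equation*}
\int_{\R^d}g^{\ast}_{s,\delta}(f,x)^2\phi(x)h_\K^2(x)\,dx=\int_0^\infty\!\!\int_{\R^d}|\partial_t T_{t,\delta}f(y)|^2\,\Phi_t(y)\,t^{-d_\K/(2\delta)+1}h_\K^2(y)\,dy\,dt,
\end{equation*}
where $\Phi_t(y)=\int_{\R^d}\tau_\K(-y)\bigl[(1+t^{-1/\delta}|\cdot|^2)^{-s}\bigr](x)\,\phi(x)\,h_\K^2(x)\,dx$. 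The crucial pointwise estimate is
\begin{equation*}
\Phi_t(y)\le C\,t^{d_\K/(2\delta)}\,M_\K\phi(y),\qquad t>0,
\end{equation*}
where $M_\K$ denotes the Dunkl Hardy-Littlewood maximal operator. Since $(1+t^{-1/\delta}|\cdot|^2)^{-s}$ is radial, R\"osler's formula \eqref{trans-rad} represents its Dunkl translation as a $\mu_y$-average of a radial profile; combining this with a standard layer-cake argument majorizes $\Phi_t$ by the maximal average of $\phi$ against the radial density $(1+|u|^2)^{-s}h_\K^2(u)$, which is integrable precisely when $2s>d_\K$.

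Combining the displayed identity with this pointwise bound, H\"older's inequality, and the $L^{(p/2)'}$-boundedness of $M_\K$ (valid because $p>2$ forces $(p/2)'>1$), we obtain
\begin{equation*}
\int_{\R^d}g^{\ast}_{s,\delta}(f,x)^2\phi(x)h_\K^2(x)\,dx\le C\int_{\R^d}g_{1,\delta}(f,y)^2 M_\K\phi(y)\,h_\K^2(y)\,dy\le C\|g_{1,\delta}(f)\|_p^{2}\|\phi\|_{(p/2)'}.
\end{equation*}
Taking the supremum over $\phi$ and invoking the $g_{1,\delta}$ bound from the first part gives $\|g^{\ast}_{s,\delta}(f)\|_p\le C\|f\|_p$. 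The main obstacle is the pointwise majorization $\Phi_t\lesssim t^{d_\K/(2\delta)}M_\K\phi$: everything else is formal Hilbert/duality bookkeeping, but this estimate is where R\"osler's representation and the borderline integrability condition $s>d_\K/2$ enter decisively, and where the homogeneous dimension $d_\K$ replaces the Euclidean dimension $n$ seen in the classical Fourier analogue of Section~\ref{sec-revisit Fourier multipliers}.
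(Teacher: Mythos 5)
Your proposal follows the same route as the paper: the $g_{j,\delta}$ equivalence is obtained by noting via Bochner subordination \eqref{boch-sub} that $T_{t,\delta}$ is a symmetric diffusion semigroup and invoking Stein's general theory (with the $L^2$ identity checked by Plancherel), and the $g^{\ast}_{s,\delta}$ bound for $p>2$ is reduced, by the standard duality/Fubini argument, to the $L^p$-boundedness of the maximal function built from the Dunkl translates of the radial kernel $(1+t^{-1/\delta}|\cdot|^2)^{-s}$, which is where $s>d_\K/2$ and R\"osler's formula enter. This is precisely the argument the paper sketches (deferring the details to the Fourier case of Section~\ref{sec-revisit Fourier multipliers}), so your write-up is a correct and somewhat more explicit version of the paper's own proof.
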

	The inequality of $ g_{s,\delta}^\ast(f) $ follows from the fact that the maximal function 
	$$ Mf(x) = \sup_{t>0}  \, t^{-\frac{d_\K}{2\delta}}\int_{\R^d} |f(y)|  \tau_\K(x)(1+t^{-\frac{1}{\delta}} |\cdot|^2)^{-s}(-y) \, h_\kappa^2(y) dy $$
	is bounded on $ L^p , 1 < p < \infty $ provided $ s > d_\K/2.$ It is known that Dunkl translation is bounded on $ L^1 $ when restricted to radial functions. In view of this,  the above maximal function  is dominated by the Hardy-Littlewood maximal function associated to the Dunkl convolution. The details are the same as in the Fourier transform case.\\
	
	\subsection{Radial multipliers for the Dunkl transform} In this subsection we consider Dunkl multipliers $ \mathcal{T}_m $ when  $ m $ is radial. As in the case of the Fourier transform, let us define
	$$ \widehat{m}_{s,\delta}(x,t) = \int_{\R^d} E_\K(ix, \xi) \, |\xi|^{2s} m(t^{-\frac{1}{2\delta}}\xi) e^{-|\xi|^{2\delta}} \, h^2_\kappa(x) d\xi$$ 
	and consider the modified H\"ormander condition 
	\begin{equation}\label{modi-H}
		\sup_{t>0}\,\int_{\R^d} (1+|x|^2)^s |\widehat{m}_{s,\delta}(x,t)|^2\, h^2_\kappa(x) dx \leq C_s.
	\end{equation}
	As before we can rewrite this condition in terms of another function defined by
	$$ \widetilde{m}_{s,\delta}(x,t) = \int_{\R^d} E_\K(ix, \xi) \, |\xi|^{2s} m(\xi) e^{-t|\xi|^{2\delta}} \, h^2_\kappa(x) d\xi.$$
	The modified H\"ormander's condition is equivalent to the following estimate:
	\begin{equation}\label{mod-hm}
		\int_{\R^d} (1+t^{-1/\delta}|x|^2)^s |\widetilde{m}_{s,\delta}(x,t)|^2\, h^2_\kappa(x) dx \leq C t^{-2s/\delta-d_\K/(2\delta)}.
	\end{equation}
	
	\begin{thm}\label{m-rad}  Let $ m $ be a radial function which satisfies the condition \eqref{modi-H} for some $ s >d_\K/2.$ 
		Then the Dunkl multiplier $ \mathcal{T}_m $ is bounded on $ L^p(\R^d,h_\K^2 )$ for all $ 1 < p < \infty.$
	\end{thm}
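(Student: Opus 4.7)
The plan is to adapt the Littlewood-Paley-Stein proof of the Fourier case in Section 3 to the Dunkl setting, using the subordinated semigroup $T_{t,\delta}=e^{-t(-\Dk)^{\delta}}$ in place of the heat semigroup. Choose a positive integer $k$ with $k\delta=s$ (so $k>d_\K/(2\delta)$) and work with the associated $g$-function $g_{k+1,\delta}$. By Theorem 4.1, one has the equivalence $\|\mathcal{T}_m f\|_p\asymp\|g_{k+1,\delta}(\mathcal{T}_m f)\|_p$ for every $1<p<\infty$, and the bound $\|g_{s,\delta}^{\ast}(f)\|_p\lesssim\|f\|_p$ for $p>2$ (available because $s>d_\K/2$). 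It therefore suffices to prove the pointwise estimate $g_{k+1,\delta}(\mathcal{T}_m f,x)\leq C\,g_{s,\delta}^{\ast}(f,x)$; the range $1<p<2$ is then recovered by duality, since $\overline m$ is also radial and $\mathcal{T}_m^{\ast}=\mathcal{T}_{\overline m}$.

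For the pointwise bound, I factor the Dunkl symbol at time $2t$ as
\begin{equation*}
|\xi|^{2(s+\delta)}e^{-2t|\xi|^{2\delta}}m(\xi)=|\xi|^{2\delta}e^{-t|\xi|^{2\delta}}\cdot|\xi|^{2s}m(\xi)e^{-t|\xi|^{2\delta}},
\end{equation*}
which, after inverting $\mathcal{F}_\K$, becomes a Dunkl convolution:
\begin{equation*}
\partial_t^{k+1}T_{2t,\delta}(\mathcal{T}_m f)(x)=c\int_{\R^d}\partial_t T_{t,\delta}f(y)\,\tau_\K(x)\widetilde m_{s,\delta}(-y,t)\,h_\K^2(y)\,dy,
\end{equation*}
where I used the symmetry $\tau_\K(-y)v(x)=\tau_\K(x)v(-y)$ from Proposition 2.3(ii). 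Set $A(z):=(1+t^{-1/\delta}|z|^2)^s$ and apply Cauchy-Schwarz in $y$ with the positive weight $w(y):=\tau_\K(x)(A^{-1})(-y)$; the first resulting factor is exactly the integrand of $g_{s,\delta}^{\ast}(f,x)^2$ at scale $t$.

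The crucial second factor is $\int|\tau_\K(x)\widetilde m_{s,\delta}(-y,t)|^2 w(y)^{-1}h_\K^2(y)\,dy$. Since $m$ is radial, so is $\widetilde m_{s,\delta}(\cdot,t)$, and Rösler's formula realises $\tau_\K(x)\widetilde m_{s,\delta}(-y,t)$ as an average against the probability measure $\mu_x$. An internal Cauchy-Schwarz on $\mu_x$ with the splitting $v_0=(v_0 A_0^{1/2})\cdot A_0^{-1/2}$ gives
\begin{equation*}
|\tau_\K(x)\widetilde m_{s,\delta}(-y,t)|^2\leq\tau_\K(x)\bigl(|\widetilde m_{s,\delta}(\cdot,t)|^2 A\bigr)(-y)\cdot\tau_\K(x)(A^{-1})(-y),
\end{equation*}
so the factor $\tau_\K(x)(A^{-1})(-y)$ cancels $w(y)^{-1}$. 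The residual integral $\int\tau_\K(x)(|\widetilde m_{s,\delta}|^2 A)(-y)\,h_\K^2(y)\,dy$ reduces, via $y\mapsto-y$ (the measure $h_\K^2\,dy$ is even) and Proposition 2.3(iii) applied with $g\equiv 1$ (using $\tau_\K(-x)1=1$ from Rösler applied to the constant function $1$), to $\int(1+t^{-1/\delta}|z|^2)^s|\widetilde m_{s,\delta}(z,t)|^2\,h_\K^2(z)\,dz$, which is bounded by $C\,t^{-2s/\delta-d_\K/(2\delta)}$ by the modified Hörmander condition in the form \eqref{mod-hm}. Multiplying out and integrating against $t^{2k+1}\,dt$ then produces $g_{s,\delta}^{\ast}(f,x)^2$, as required.

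The main obstacle I anticipate is precisely this second Cauchy-Schwarz step: Dunkl translation neither commutes with multiplication by a general function nor respects pointwise squaring, so \emph{a priori} the weighted integral of $|\tau_\K(x)\widetilde m_{s,\delta}(-y,t)|^2\,w(y)^{-1}$ could diverge and one cannot reduce it to the bare Hörmander quantity. Rösler's probabilistic representation of translations of radial functions is exactly the mechanism that permits an internal Cauchy-Schwarz to manufacture the factor $\tau_\K(x)(A^{-1})(-y)$ which cancels $w(y)^{-1}$ and restores the un-translated Hörmander integral; this is where the radiality of $m$, inherited by $\widetilde m_{s,\delta}(\cdot,t)$, is used in an essential way.
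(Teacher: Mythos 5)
Your argument is correct and follows essentially the same route as the paper's proof: the identity $G_{k+1,\delta}\mathcal{T}_mf(x,2t)=\int \partial_t T_{t,\delta}f(y)\,\tau_\K(x)\widetilde m_{s,\delta}(-y,t)\,h_\K^2(y)\,dy$, a Cauchy--Schwarz with the weight $\tau_\K(x)\bigl((1+t^{-1/\delta}|\cdot|^2)^{-s}\bigr)(-y)$, and the key product inequality for translates of radial functions (your ``internal Cauchy--Schwarz on $\mu_x$'' is exactly Corollary \ref{trans-prod}, which the paper invokes rather than rederives). The only cosmetic difference is that you split the weight before applying Cauchy--Schwarz while the paper splits the translated multiplier first; the resulting estimate and the use of \eqref{mod-hm} are identical.
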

	\begin{proof} Since the norm of $ g_{k+1,\delta}(\mathcal{T}_mf) $ is equivalent to that of $ \mathcal{T}_mf $ it is enough to prove that $ g_{k+1,\delta}(\mathcal{T}_mf,x) \leq C\, g_{s,\delta}^\ast(f,x).$ To this end we write 
		
		$$ G_{k+1,\delta}\mathcal{T}_mf(x,2t) = \int_{\R^d} \partial_t T_{t,\delta}f(y)\, \tau_\K(x) \widetilde{m}_{s,\delta}(-y,t)\, h_\kappa^2(y) dy.$$
		Now we make use of the assumption that $ m $ is radial. Writing  
		$$  \widetilde{m}_{s,\delta}(y,t) = (1+t^{-1/\delta}|y|^2)^{-s/2} (1+t^{-1/\delta}|y|^2)^{s/2}\widetilde{m}_{s,\delta}(y,t)$$
		and making use of Corollary \ref{trans-prod}  we get the estimate
		$$ \left( \tau_\K(x)\widetilde{m}_{s,\delta}(y,t)\right)^2 \leq \tau_\K(x)(1+t^{-1/\delta}|y|^2)^{-s}  \tau_\K(x) \left((1+t^{-1/\delta}|y|^2)^{s}|\widetilde{m}_{s,\delta}(y,t)|^2\right).$$
		Under the assumption on the function $ \widetilde{m}_{s,\delta}(x,t) $ and $L^2$-boundedness of the Dunkl translations, we get the estimate
		$$ |G_{k+1,\delta}\mathcal{T}_mf(x,2t)|^2 \leq C t^{-2s/\delta-d_\K/(2\delta)}  \int_{\R^d} |\partial_t T_{t,\delta}f(y)|^2\,\tau_\K(x)(1+t^{-1/\delta}|-y|^2)^{-s} h^2_\kappa(y) dy \,  .$$
		Integrating the above with respect to $ t^{2k+1} dt $ and noting that $ s/\delta =k $ we obtain the estimate $ g_{k+1,\delta}(T_mf,x) \leq C\, g_{s,\delta}^\ast(f,x)$ as claimed.
	\end{proof}
	
	\subsection{Dunkl multipliers acting on radial functions}  In this subsection we  consider the  general multiplier $ \mathcal{T}_m $  (where $ m $ is not assumed to be radial) acting on the subspace $ L^p_{rad}(\R^d, h_\K^2) $. A slight variation in the proof of Theorem \ref{m-rad} yields the following result.  Note that the following theorem is proved only for $ p \geq 2.$ This is due the fact that  $ g_{s,\delta}^\ast $ function is bounded on $ L^p(\R^d, h_\K^2 ) $ only for $ p > 2 $ and as $ \mathcal{T}_m $ need not take radial functions into radial functions, we cannot use duality.
	
	\begin{thm}\label{m-no-rad}  Let $ m $ be a bounded function which satisfies the condition \eqref{modi-H} for some $ s >d_\K/2.$ 
		Then the Dunkl multiplier $ \mathcal{T}_m :  L^p_{rad}(\R^d, h_\K^2 ) \rightarrow  L^p(\R^d,h_\K^2 )$ is bounded for all $ 2 \leq p < \infty.$
	\end{thm}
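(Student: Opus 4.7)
The plan is to mirror the proof of Theorem \ref{m-rad}: establish the pointwise bound $g_{k+1,\delta}(\mathcal{T}_m f, x) \leq C\, g^\ast_{s,\delta}(f, x)$, but now exploiting the radiality of $f$ in place of that of $m$. Once this is in hand, Theorem 4.1 gives $\|\mathcal{T}_m f\|_p \leq C\,\|g_{k+1,\delta}(\mathcal{T}_m f)\|_p \leq C\,\|g^\ast_{s,\delta}(f)\|_p \leq C\,\|f\|_p$ for $p > 2$, while $p = 2$ follows immediately from Plancherel and $\|m\|_\infty < \infty$. Since $\mathcal{T}_m$ does not preserve radiality, the duality argument available in Theorem \ref{m-rad} for $1 < p < 2$ is unavailable, which is what forces the restriction $p \geq 2$.

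As in the radial case, start from
\[
G_{k+1,\delta}\mathcal{T}_m f(x, 2t) = \int_{\R^d} \partial_t T_{t,\delta} f(y)\, \tau_\K(x)\widetilde{m}_{s,\delta}(-y,t)\, h_\K^2(y)\, dy.
\]
The key new observation is that $u(\cdot,t) := \partial_t T_{t,\delta} f$ is radial: since $f$ is radial its Dunkl transform is radial, hence $\Fk(T_{t,\delta}f)(\xi) = e^{-t|\xi|^{2\delta}}\Fk f(\xi)$ is radial, and inverting gives that both $T_{t,\delta}f$ and its $t$-derivative are radial. Using the adjoint property of Proposition \ref{properties of Dunkl trans}(iii), I shift the translation $\tau_\K(x)$ off the (non-radial) multiplier kernel and onto $u$, rewriting the integral as
\[
G_{k+1,\delta}\mathcal{T}_m f(x, 2t) = \int_{\R^d} \widetilde{m}_{s,\delta}(-y,t)\, \tau_\K(-x)u(y,t)\, h_\K^2(y)\, dy.
\]

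Now apply Cauchy--Schwarz with the weights $(1+t^{-1/\delta}|y|^2)^{\pm s/2}$. The multiplier factor $\int (1+t^{-1/\delta}|y|^2)^s |\widetilde{m}_{s,\delta}(-y,t)|^2 h_\K^2(y)\, dy$ is bounded by $C\,t^{-2s/\delta - d_\K/(2\delta)}$ via \eqref{mod-hm}, using $h_\K(-y) = h_\K(y)$ and $|-y| = |y|$. For the remaining factor, the radiality of $u(\cdot,t)$ lets me invoke R\"osler's formula \eqref{trans-rad}, which expresses $\tau_\K(-x) u(y,t)$ as an integral of $u$ against a probability measure; Cauchy--Schwarz against that measure yields the pointwise domination $|\tau_\K(-x)u(y,t)|^2 \leq \tau_\K(-x)|u(\cdot,t)|^2(y)$. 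A second application of Proposition \ref{properties of Dunkl trans}(iii), now with the \emph{radial} weight $(1+t^{-1/\delta}|\cdot|^2)^{-s}$, transfers $\tau_\K$ back onto the weight, producing exactly the integrand of $g^\ast_{s,\delta}$.

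Multiplying by $t^{2k+1}$ and integrating over $t \in (0,\infty)$, and using $s/\delta = k$ so that $t^{2k+1 - 2s/\delta - d_\K/(2\delta)} = t^{-d_\K/(2\delta)+1}$, yields the desired pointwise bound after a harmless change of variable $2t \to t$. The main obstacle is precisely that $\widetilde{m}_{s,\delta}(\cdot,t)$ is no longer radial, so Corollary \ref{trans-prod} cannot be applied to it as in Theorem \ref{m-rad}; my substitute is the two-step shuffling of $\tau_\K$ across the pairing, with R\"osler's formula invoked at the one step where radiality is indispensable, namely on the semigroup iterate $u$.
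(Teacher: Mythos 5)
Your proposal is correct and follows essentially the same route as the paper: both proofs place the Dunkl translation on the radial factor $\partial_t T_{t,\delta}f$, apply Cauchy--Schwarz with the weights $(1+t^{-1/\delta}|y|^2)^{\pm s/2}$ together with \eqref{mod-hm}, use the R\"osler-formula/Jensen bound $|\tau_\K(x)u|^2\le \tau_\K(x)|u|^2$ (Corollary \ref{trans-prod} with $g\equiv 1$), and then shift the translation onto the radial weight to recognize $g^\ast_{s,\delta}$. The only cosmetic difference is that you arrive at the translated-semigroup form by an explicit adjoint step, while the paper writes it down directly.
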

	\begin{proof} We begin as in the case of Theorem \ref{m-rad} the integral representation
		\begin{equation}\label{one} G_{k+1,\delta}\mathcal{T}_mf(x,2t) = \int_{\R^d} \tau_\K(x)\left(\partial_t T_{t,\delta}f\right)(-y)\,  \widetilde{m}_{s,\delta}(y,t)\, h_\kappa^2(y) dy 
		\end{equation}
		where the translation is now placed on the first factor. As before writing
		$$  \widetilde{m}_{s,\delta}(y,t) = (1+t^{-1/\delta}|y|^2)^{-s/2} (1+t^{-1/\delta}|y|^2)^{s/2}\widetilde{m}_{s,\delta}(y,t)$$ 
		and applying Cauchy-Schwarz in \eqref{one} and making use of \eqref{mod-hm} we obtain
		$$ |G_{k+1,\delta}\mathcal{T}_mf(x,2t)|^2 \leq C t^{-2s/\delta-N/(2\delta)}  \int_{\R^d} |\tau_\K(x) \left(\partial_t T_{t,\delta}f\right)(-y)|^2\,(1+t^{-1/\delta}|y|^2)^{-s} h_\kappa^2(y) dy \,  .$$
		As $ f $ is radial, so is $\partial_t T_{t,\delta}f $ and hence by Corollary \ref{trans-prod} we have
		$$ |\tau_\K(x)\left(\partial_t T_{t,\delta}f\right)(y)|^2 \leq \tau_\K(x) |\partial_t T_{t,\delta}f|^2(y) .$$
		Using this in the above inequality, transferring the translation to the second factor, namely  $(1+t^{-1/\delta}|y|^2)^{-s},$ integrating against $ t^{2k+1} dt $ and noting that $ s/\delta =k $ we obtain the  estimate 
		$$ g_{k+1,\delta}(\mathcal{T}_mf,x) \leq C\, g_{s,\delta}^\ast(f,x)$$ as before.
	\end{proof}
	
		Let us examine the condition \eqref{mod-hm} in detail. When $ t^{-1/\delta}|x|^2 \leq 1 $, this condition can be deduced from the inequality
		$$ \int_{\R^d}  |\widetilde{m}_{s,\delta}(x,t)|^2\, h^2_\kappa(x) dx \leq C t^{-2s/\delta-d_\K/(2\delta)}.$$
		Recalling the definition of  $\widetilde{m}_{s,\delta}(x,t)$, the boundedness of $ m $ together with the Plancherel theorem for the Dunkl transform implies that the above estimate holds. On the other hand when $ t^{-1/\delta}|x|^2 \geq 1 $, the estimate \eqref{mod-hm} follows from the fact that
	$$ \int_{\R^d} |x|^{2s}\, |\widetilde{m}_{s,\delta}(x,t)|^2\, h^2_\kappa(x) dx \leq C t^{-s/\delta-d_\K/(2\delta)}.$$
	When $ s =k $ is an integer so that $ \delta = 1$ the above estimate is equivalent to 
	\begin{equation}\label{esti-one}
		\int_{\R^d} \, | x^\alpha\, \widetilde{m}_{s,\delta}(x,t)|^2\, h^2_\kappa(x) dx \leq C t^{-k-d_\K/2} 
	\end{equation}
	for all $ |\alpha|= k.$ From the definition of  $ E_\K(x, y) $ it follows that
	$$ (-i)^\alpha\, x^\alpha\,  \widetilde{m}_{s,\delta}(x,t) = \int_{\R^d} \mathcal{D}^\alpha E_\K(ix, \xi) \, |\xi|^{2k} m(\xi) e^{-t|\xi|^{2}} \, h^2_\kappa(\xi) d\xi.$$
	Integrating by parts and using Plancherel we see that \eqref{esti-one} is equivalent to 
	$$ \int_{\R^d} \, |\mathcal{D}^\alpha \big( |\xi|^{2k} m(\xi) e^{-t|\xi|^{2}} \big)|^2\, h^2_\kappa(\xi) d\xi \leq C\, t^{-k-d_\K/2} $$
	At this point we need the Leibniz formula stated and proved in Proposition \ref{Leibniz}.\\

	\begin{proof}[\bf Proof of Corollary \ref{cor}] Let $ h_t(r) =  r^{2k} e^{-tr^2} $ and take $ g(\xi) = h_t(|\xi|).$ We are left with proving the estimate
		$$ \int_{\R^d} \, |\mathcal{D}^\alpha (m\,g)(\xi)|^2\, h^2_\kappa(\xi) d\xi \leq C\, t^{-k-d_\K/2} $$
		for all $ |\alpha| =k.$  From the Leibniz formula for $ \mathcal{D}(m\,g) $ proved in Proposition \ref{Leibniz} we need to verify 
		\begin{equation}\label{integ} \int_{\R^d} \, |\xi^{\beta}\, \mathcal{D}^{\gamma} m_\ell(\xi)\, D_r^j h_t(|\xi|)|^2\, h^2_\K(\xi)\, d\xi \leq C\, t^{-k-d_\K/2} 
		\end{equation}
		where $ 2j+|\gamma|-|\beta| =k.$ As $ m_\ell $ satisfies the same estimates as $ m $ we have
		$$ |\xi^{\beta}\, \mathcal{D}^{\gamma} m_\ell(\xi)| \leq C\, |\xi|^{-|\gamma|+|\beta|}.$$
		On the other hand recalling the definition of $ D_r $ it is easy to check that
		\begin{equation}\label{esti-heat} |D_r^j h_t(|\xi|)| \leq C\, |\xi|^{2k-2j}\, e^{-at |\xi|^2} 
		\end{equation}
		for some $ a >0.$ Consequently, as $ 2j+|\gamma|-|\beta| =k$ see that \eqref{integ} is bounded by
		$$ C  \int_{\R^d} \,|\xi|^{2k}\, e^{-2at |\xi|^2}h^2_\K(\xi)\, d\xi \leq C\, t^{-k-d_\K/2} $$
		Thus the modified H\"ormander condition  \eqref{mod-hm} is satisfied which completes the proof.
	\end{proof}
	
	\begin{proof}[\bf Proof of Theorem \ref{rad-mult}] As in the proof of the previous corollary, using the Leibniz formula proved in Corollary \ref{rad-Leibnitz} we need to prove the estimates
		\begin{equation}\label{integral} \int_{\R^d} \, | P_j(\xi)\, D_r^n m_0(|\xi|)\, D_r^\ell h_t(|\xi|)|^2\, h^2_\K(\xi)\, d\xi \leq C\, t^{-k-d_\K/2} 
		\end{equation}
		for all $ n+\ell \leq k, 2(n+\ell)-j = k $ for any homogeneous polynomial $ P_j $ of degree $j.$ The hypothesis on $ m_0 $ gives the estimate $ |D_r^n m_0(|\xi|)| \leq c_n\, |\xi|^{-2n}.$ Using this and the estimate \eqref{esti-heat} 
		and remembering the relation $ 2(n+\ell)-j = k,$ the integral \eqref{integral} is bounded by 
		$$C  \int_{\R^d} \,|\xi|^{2k}\, e^{-2at |\xi|^2}h^2_\K(\xi)\, d\xi \leq C\, t^{-k-d_\K/2} .$$
		This completes the proof of the theorem as the modified H\"ormander condition  is satisfied.
	\end{proof}
	
	\section*{Acknowledgements}
	This work was carried out when ST was visiting Harish-Chandra Research Institute as Infosys Visiting Professor, with SM also visiting during that period. Both authors thank HRI for the facilities and warm hospitality. The authors also thank the reviewer for her/his thorough review of the manuscript and for providing many helpful comments and suggestions. ST was partially supported by INSA, and SM was supported by research fellowship from the Department of Atomic Energy (DAE), Government of India.

	\providecommand{\bysame}{\leavevmode\hbox to3em{\hrulefill}\thinspace}
	\providecommand{\MR}{\relax\ifhmode\unskip\space\fi MR }
	\providecommand{\MRhref}[2]{%
		\href{http://www.ams.org/mathscinet-getitem?mr=#1}{#2}
	}
	\providecommand{\href}[2]{#2}
	
\end{document}